\numberwithin{equation}{section}
\newtheorem{theorem}{Theorem}[section]
\newtheorem*{theorem*}{Theorem}
\newtheorem{proposition}[theorem]{Proposition}
\newtheorem*{proposition*}{Proposition}
\newtheorem{lemma}[theorem]{Lemma}
\theoremstyle{definition}
\newtheorem{definition}[theorem]{Definition}
\newtheorem{example}[theorem]{Example}
\newtheorem{examples}[theorem]{Examples}
\newtheorem{remark}[theorem]{Remark}
\newtheorem{remarks}[theorem]{Remarks}
\numberwithin{equation}{section}
\DeclareMathOperator{\End}{End}
\DeclareMathOperator{\clifford}{Cliff}
\DeclareMathOperator{\Cliff}{Cliff}
\DeclareMathOperator{\order}{order}
\DeclareMathOperator{\CliffordOrder}{Clifford-order} 
\DeclareMathOperator{\GetzlerOrder}{Getzler-order}
\DeclareMathOperator{\CharSpec}{{Char\hspace{1.4pt}Spec}}
\DeclareMathOperator{\str}{str}
\DeclareMathOperator{\Str}{STr}
\DeclareMathOperator{\Tr}{Tr}
\newcommand{\R}{\mathbb{R}}
\newcommand{\Z}{\mathbb{Z}}
\newcommand{\N}{\mathbb{N}}
\newcommand{\C}{\mathbb{C}}
\newcommand{\sheafA}{\boldsymbol{{A}}}
\newcommand{\sheafS}{\boldsymbol{{S}}}
\begin{document}

\title{Spinors and the Tangent Groupoid}

\author{Nigel Higson and Zelin Yi}

\date{}

\maketitle

\begin{abstract} 
The purpose of this article is to study Ezra Getzler's approach to the Atiyah-Singer index theorem from the perspective of Alain Connes'  tangent groupoid.  We shall construct a ``rescaled'' spinor bundle on the tangent groupoid, define a convolution operation on its smooth, compactly supported sections, and explain how the   algebra so-obtained incorporates Getzler's symbol calculus.
\end{abstract}


\section{Introduction}

In this paper we shall investigate the relationship between Alain Connes' tangent groupoid \cite[Sec.\ II.5]{Connes94} and Ezra Getzler's approach to the Atiyah-Singer index theorem for the Dirac operator on a spin manifold \cite{Getzler83,BerlineGetzlerVergne92}.  We shall construct a variation of Connes' convolution algebra for the tangent groupoid that incorporates Getzer's rescaling of  Clifford variables. The new algebra carries a family of supertraces that smoothly vary between  operator traces and an integral of differential forms (and so in index-theoretic contexts, where the operator traces are integer-valued, the integral of differential forms actually computes the operator trace). 

Connes introduced the tangent groupoid in order to conceptualize the construction by Atiyah and Singer  \cite{AtiyahSinger68}  of the $K$-theoretic analytic index map,
\[
\operatorname{Ind}_a\colon K(T^*M)\longrightarrow \Z ,
\]
and  thereby streamline the $K$-theory proof of the Atiyah-Singer index theorem.  In contrast, Getzler's approach to the index theorem was purely local in character, and on the surface at least, quite far removed from global, $K$-theoretic considerations.  So it is an interesting problem to try to harmonize the two approaches.  
The issue  has certainly been considered by others, but as far as we are aware  little has been published  on this topic.  It is the purpose of this paper to help fill this gap. 

We should say at the outset that virtually everything that follows is implicit either in Getzler's original work or in Connes' definition of the tangent groupoid.  But a fresh looks seems to be worthwhile, especially in view of the new role that these ideas are finding in Bismut's work on the hypoelliptic Laplacian; see for example \cite{Bismut11}.  The latter was, in fact, an important motivation for us.

The tangent groupoid of a smooth manifold $M$ is, among other things, a smooth manifold $\mathbb{T}M$ equipped with a submersion 
onto  $M{\times}\R$
via the \emph{source map} that is part of  the groupoid structure. The fibers of the source  map  have  the form 
\begin{equation}
\label{eq-source-fibers}
\mathbb{T}M _{(m,\lambda )} \cong 
\begin{cases}  
M & \lambda \ne 0 \\
T_mM & \lambda = 0 .
\end{cases}
\end{equation}
So the tangent groupoid smoothly interpolates between the curved manifold $M$ and its linear tangent spaces.

Now let $D$ be a linear partial differential operator on $M$. For $m\in M$, denote by $D_m$   the constant-coefficient, linear partial differential \emph{model operator} on the tangent space $T_mM$ that is  obtained by freezing the coefficient functions for $D$ in   local coordinates   at $m$, and then dropping lower-order terms; the resulting operator is invariantly defined on the tangent space and carries the same information as the principal symbol of $D$ at $m$.  The foundation of the relationship between the tangent groupoid and partial differential operators, and eventually between the tangent groupoid and  index theory,  is the following result:
\begin{theorem*}
If $D$ has order $q$, then the operators 
\[
D_{(m,\lambda)} = 
\begin{cases}
\lambda^{q} D & \lambda\ne 0 \\
D_m & \lambda = 0
\end{cases}
\]
constitute, under the identifications \eqref{eq-source-fibers}, a smooth family of differential operators on the  source  fibers of the tangent groupoid.
\end{theorem*}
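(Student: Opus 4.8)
The plan is to verify the claim in local coordinates, since being a smooth family of differential operators along the fibers of the source submersion $s\colon \mathbb{T}M\to M{\times}\R$ is a local condition, and since the two families of operators that have to be glued — the operator $\lambda^{q}D$ on each copy of $M$ for $\lambda\ne 0$, and the model operators $D_m$ on the tangent spaces for $\lambda = 0$ — are already globally defined on $M$ and on $TM$ respectively. Away from $\lambda = 0$ there is nothing to prove: there the source fibers are honest copies of $M$ and $D_{(m,\lambda)} = \lambda^{q}D$ depends smoothly (indeed polynomially) on $\lambda$. So I would fix a chart identifying an open set $U\subseteq M$ with an open subset of $\R^n$, $n = \dim M$, and recall the associated ``zoom'' chart on $\mathbb{T}M$: by definition of the smooth structure on $\mathbb{T}M$, the map sending $(m,m',\lambda)$ to $\bigl(m,\ \lambda^{-1}(m'-m),\ \lambda\bigr)$ for $\lambda\ne 0$ and $(m,v,0)$ to $(m,v,0)$ is a diffeomorphism from the part of $\mathbb{T}M$ lying over $U$ onto an open subset of $\R^{n}\times\R^{n}\times\R$ with coordinates $(x,w,\lambda)$. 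In this chart the source map is simply $(x,w,\lambda)\mapsto (x,\lambda)$, so $w$ is a fiber coordinate; for $\lambda = 0$ it is the linear coordinate on $T_mM$ induced by the chart, while for $\lambda\ne 0$ the point with fiber coordinate $w$ over $(x,\lambda)$ is the point of $M$ whose ordinary coordinate is $y = x + \lambda w$.

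Next I would carry out the change of variables. Writing $D = \sum_{|\alpha|\le q} a_\alpha(x)\,\partial_x^{\alpha}$ on $U$, the operator $D_{(m,\lambda)} = \lambda^{q}D$ for $\lambda\ne 0$ acts on functions of the ordinary fiber coordinate $y$; substituting $y = x + \lambda w$, so that $\partial_y = \lambda^{-1}\partial_w$, yields
\[
D_{(m,\lambda)} \;=\; \sum_{|\alpha|\le q} \lambda^{\,q-|\alpha|}\, a_\alpha(x + \lambda w)\,\partial_w^{\alpha}.
\]
The essential point is that every exponent $q-|\alpha|$ appearing here is $\ge 0$, because $D$ has order $q$; hence each coefficient $\lambda^{\,q-|\alpha|} a_\alpha(x+\lambda w)$ is a smooth function of $(x,w,\lambda)$ on the whole zoom chart, including at $\lambda = 0$. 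This already exhibits the coefficients of $D_{(m,\lambda)}$, in coordinates adapted to $s$, as extending smoothly across $\lambda = 0$.

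Finally I would identify the boundary value: setting $\lambda = 0$ in the displayed formula annihilates every term with $|\alpha| < q$ and leaves $\sum_{|\alpha| = q} a_\alpha(x)\,\partial_w^{\alpha}$, which is exactly the constant-coefficient model operator $D_m$ written in the linear coordinate $w$ on $T_mM$. Thus the smooth extension of the family from $\lambda\ne 0$ agrees at $\lambda = 0$ with the operator prescribed there, and the family $\{D_{(m,\lambda)}\}$ is smooth. Chart-independence needs no separate argument: the two ingredient families are intrinsic, and smoothness has been checked in an arbitrary member of an atlas covering $\mathbb{T}M$ (together with the trivial region $\lambda\ne 0$); alternatively one can verify directly that the expression above transforms correctly under a change of coordinate chart on $M$.

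The one place that genuinely requires care — and which is really the whole mechanism of the theorem — is the interplay of the two scalings: the prefactor $\lambda^{q}$ coming from the groupoid rescaling of $D$, and the factor $\lambda^{-|\alpha|}$ produced by each derivative when one passes from the honest fiber coordinate $y$ to the zoom coordinate $w$. Their product $\lambda^{\,q-|\alpha|}$ has a nonnegative exponent \emph{precisely because} $\order(D) = q$, and this single fact is simultaneously what makes the coefficients smooth at $\lambda = 0$ and what forces the limiting operator to be the top-order (``lower-order-terms-dropped'') part of $D$ with coefficients frozen at $m$. A secondary point worth stating explicitly is that the phrase ``smooth family of differential operators on the source fibers'' is meaningful only because $s$ is a submersion, so that the zoom charts are at the same time charts adapted to $s$; this is exactly the structure recorded in \eqref{eq-source-fibers}.
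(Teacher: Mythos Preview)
Your argument is correct, but it takes a different route from the paper's. You work directly in the ``zoom'' coordinates $(x,w,\lambda)$ on $\mathbb{T}M$ and perform the explicit change of variables $y=x+\lambda w$, so the whole proof reduces to the observation that the exponent $q-|\alpha|$ is nonnegative. The paper instead stays inside its algebraic framework: it writes $D$ locally as a sum of products $h\cdot X_1\cdots X_d$ of functions and vector fields, notes that $\lambda^q D$ is then a sum of terms $\lambda^{q-d}\cdot h\cdot(\lambda X_1)\cdots(\lambda X_d)$, and reduces to the case of a single vector field, which is handled by Proposition~\ref{prop-extended-vector-field} (the family $\{\lambda X\}$ extends smoothly because $X$ induces a derivation of the Rees algebra $A(\mathbb{T}M)$) together with the computation \eqref{eq-symbol-computation} identifying the limit at $\lambda=0$ as directional differentiation.

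Your approach is more elementary and self-contained for this statement in isolation. The paper's approach, on the other hand, is the one that generalizes: the later spinor version (Theorem~\ref{thm-getzler-family}) is proved by the same pattern of reducing to generators and showing that each generator acts on the Rees-type module $S(\mathbb{T}M)$, where no coordinate calculation of your sort is available. One small caveat: your phrase ``by definition of the smooth structure on $\mathbb{T}M$'' presupposes the zoom-chart description, whereas in this paper the smooth structure is defined via the character spectrum of $A(\mathbb{T}M)$ and the sheaf of Definition~\ref{def-sheaf-of-smooth-functions}; the equivalence is standard, but in the paper's conventions it is a (small) fact to be checked rather than a definition.
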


The theorem is easy to prove, as we shall recall in Section~\ref{sec-tangent-groupoid}.  In fact it is more or less incorporated into the \emph{definition} of the tangent groupoid. 

Suppose now that $M$ is a Riemannian spin manifold with spinor bundle $S$.  In Section~\ref{sec-rescaled} we shall construct from $S$   a smooth vector bundle $\mathbb{S}$ on $\mathbb{T}M$ whose restrictions to the fibers in \eqref{eq-source-fibers} are as follows:
\begin{equation}
\label{eq-S-on-source-fibers}
\mathbb{S}\vert _{ \mathbb{T}M _{(m,\lambda)}} \cong 
\begin{cases}  
S \otimes S_m^* & \lambda\ne 0 \\
\wedge ^* T_mM & \lambda = 0 .
\end{cases}
\end{equation}
The most important, and indeed defining, feature of $\mathbb{S}$, is its relation to Getzler's 
 filtration of the algebra of linear partial differential operators acting on the sections of the spinor bundle  \cite{Getzler83}.      The filtration associates to any  operator $D$ a new family of model operators  $D_{\langle m\rangle }$  on the tangent spaces $T_mM$.  These are typically \emph{not} constant-coefficient operators, and moreover they reflect the Riemannian geometry of $M$ in a rather subtle way. We shall   prove the following result.

\begin{theorem*}
 If $D$ is a linear partial differential operator on $M$, acting on the sections of $S$, and if $D$ has Getzler-order no more than $q$, then the operators 
\[
D_{(m,\lambda)} = 
\begin{cases}
\lambda^q D & \lambda \ne 0 \\
D_{\langle m\rangle } & \lambda = 0
\end{cases}
\]
constitute a smooth family of operators on the source-fibers of $\mathbb{T}M$, acting on the sections of the smooth vector bundle $\mathbb{S}$.
\end{theorem*}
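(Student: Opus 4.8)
The plan is to check smoothness of the family locally on $\mathbb{T}M$, in a chart around a point of the zero fibre $\lambda=0$, and to recognise there the operators $D_{(m,\lambda)}$ as precisely Getzler's $\lambda$-rescaled model of $D$. Once that is done, smoothness of the family across $\lambda=0$ is exactly Getzler's convergence statement for operators of Getzler-order at most $q$, and the value of the family at $\lambda=0$ is the defining formula for $D_{\langle m\rangle}$. Since smoothness is a local condition and the assertion is clear on the open set $\lambda\ne0$ — there $D_{(m,\lambda)}=\lambda^{q}D$ acts, for every $\lambda\ne0$, on the same bundle $S\otimes S_{m}^{*}$ over the source fibre and depends polynomially on $\lambda$ — it is enough to work in the charts of $\mathbb{T}M$ around the zero fibre that are recalled in Section~\ref{sec-tangent-groupoid}.

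So fix $m_{0}\in M$, choose geodesic normal coordinates on a neighbourhood $U$ of $m_{0}$, and consider the chart $\Phi\colon TU\times\R\to\mathbb{T}M$ given by $\Phi(v,m,\lambda)=(\exp_{m}(\lambda v),m,\lambda)$ for $\lambda\ne0$ and $\Phi(v,m,0)=(v,m,0)$. Over this chart I would use the trivialisation of $\mathbb{S}$ from Section~\ref{sec-rescaled}: radial parallel transport trivialises $S$ along the source fibres, the symbol map identifies $\End(S_{m})$ with $\wedge^{*}T_{m}^{*}M\otimes\C$ (here it does no harm to assume $M$ even-dimensional), and a rescaling by $\lambda$ of the exterior degree is built into the way $S_{\exp_{m}(\lambda v)}\otimes S_{m}^{*}$ is matched, for $\lambda\ne0$, with the fibre $\wedge^{*}T_{m}^{*}M$ of $\mathbb{S}$ along $\lambda=0$. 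In this trivialisation a section of $\mathbb{S}$ over the chart becomes a smooth $\wedge^{*}T^{*}U$-valued function of $(v,m,\lambda)$; and the zooming fibre coordinate $x=\exp_{m}(\lambda v)$ supplies the spatial part, while the trivialisation of $\mathbb{S}$ supplies the exterior-degree part, of Getzler's rescaling operator $\delta_{\lambda}\colon g(v)\mapsto\sum_{j}\lambda^{-j}[g(\lambda v)]_{j}$ on $\wedge^{*}$-valued functions. Thus, writing $\widetilde D_{m}$ for $D$ expressed in the normal coordinates and the parallel-transport gauge over $T_{m}M$, the pulled-back operator $\Phi^{*}D_{(m,\lambda)}$ becomes, for $\lambda\ne0$,
\[
\lambda^{q}\,\delta_{\lambda}\circ\widetilde D_{m}\circ\delta_{\lambda}^{-1},
\]
which is exactly the Getzler rescaling of $D$ at $m$.

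The core of the argument is then a count of powers of $\lambda$. For a monomial $a(v)\,\beta\,\partial^{\alpha}$ of $\widetilde D_{m}$, with $\beta$ a Clifford element of filtration degree $d$, conjugation by $\delta_{\lambda}$ sends $\partial^{\alpha}$ to $\lambda^{-|\alpha|}\partial^{\alpha}$, sends $a(v)$ to $a(\lambda v)$ (a Taylor series in non-negative powers of $\lambda$), and sends $\beta$ to a sum of products of exterior and interior multiplications weighted by $\lambda^{\pm1}$; the contributions that would be more singular than $\lambda^{-d}$ each involve a repeated exterior factor and hence vanish, so $\delta_{\lambda}\beta\delta_{\lambda}^{-1}$ has order $\lambda^{-d}$ at worst. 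Together with the overall factor $\lambda^{q}$, every surviving term carries a power of $\lambda$ at least $q-(d+|\alpha|)$, which is $\geq 0$ precisely because $D$ has Getzler-order at most $q$. Hence the displayed family extends smoothly across $\lambda=0$ — polynomially in $\lambda$ up to a smooth remainder from the Taylor expansions of the coefficients — and its value at $\lambda=0$ is the sum of the $\lambda^{0}$ terms, which is by definition $D_{\langle m\rangle}$ acting on $\wedge^{*}T_{m}^{*}M$-valued functions on $T_{m}M$. Smoothness in $(v,m)$ at $\lambda=0$, and jointly in all variables, follows because the metric, its curvature tensor, the radial parallel transport and the coefficients of $D$ all depend smoothly on $m$, so that the coefficients and remainders of these expansions are smooth in $(v,m)$.

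I expect the main obstacle to be organisational rather than conceptual: one must install mutually compatible conventions for the chart of $\mathbb{T}M$, for the trivialisation of $\mathbb{S}$ from Section~\ref{sec-rescaled}, and for Getzler's rescaling, and then track the powers of $\lambda$ produced by the spatial zoom, by the exterior-degree rescaling, and by the coordinate Taylor expansions, so as to confirm both that no negative power of $\lambda$ survives and that the $\lambda^{0}$ term is the correct operator $D_{\langle m\rangle}$. A secondary point deserving explicit mention is uniformity in $m$: Getzler's original treatment fixes the base point, whereas here one needs the relevant Taylor remainders to be smooth — not merely bounded — functions of $m$, which is immediate from the smooth dependence of all the geometric data on $m$ but should be recorded.
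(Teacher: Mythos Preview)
Your approach is viable but takes a genuinely different route from the paper. The paper never passes to a local chart on $\mathbb{T}M$ or invokes Getzler's classical rescaling operator $\delta_\lambda$; instead it argues entirely within the algebraic framework of Section~\ref{sec-rescaled}. It reduces to the generators $c(X)$ and $\nabla_X$ (each of Getzler order one) and, for each, observes that the formula $\sum\sigma_p t^{-p}\mapsto\sum D\sigma_p\, t^{-(p-1)}$ defines an endomorphism of the Rees module $S(\mathbb{T}M)$. Smoothness across $\lambda=0$ then falls out of the module structure, and the value at $\lambda=0$ is read off from the induced operator on the associated graded $S_0(\mathbb{T}M)$ via the evaluation maps $\varepsilon_{Y_m}=\varepsilon_m\circ\exp(\boldsymbol{\nabla_Y})$. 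The computation of the limit uses commutator identities---notably the Baker--Campbell--Hausdorff step of Lemma~\ref{lem-heisenberg} to handle $[\boldsymbol{\nabla_Y},\boldsymbol{\nabla_X}]=\boldsymbol{K(Y,X)}$---rather than a power count in $\lambda$.

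Your local-chart argument would work, but the step you label ``organisational rather than conceptual'' is in fact the substantive one: you must verify that the bundle $\mathbb{S}$, \emph{as defined in this paper via the scaling filtration}, really trivialises in your chart so that $\lambda^q D$ becomes $\lambda^q\,\delta_\lambda\widetilde D_m\delta_\lambda^{-1}$. Concretely this means checking that the local frame $\{\widehat{e_I t^{\ell(I)}}\}$, with the $e_I$ chosen synchronous as in the appendix, realises exactly the degree-by-degree $\lambda$-weighting you assert. That is true, but it is the crux of the matter, not bookkeeping; Section~\ref{sec-rescaled} does not present $\mathbb{S}$ in this guise. The payoff of the paper's approach is that it sidesteps this identification entirely and never leaves the intrinsic module-theoretic language; the payoff of yours is a direct bridge to the classical literature on Getzler rescaling and an explicit $\lambda$-power bookkeeping. (A small point: in your Clifford count, no term more singular than $\lambda^{-d}$ ever arises, since each factor contributes at worst $\lambda^{-1}$; the ``repeated exterior factor'' remark is unnecessary.)
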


Let us return to Connes' work.  He  constructs a \emph{convolution algebra} $C_c^\infty(\mathbb{T}M)$ of smooth, compactly supported, complex-valued functions on the tangent groupoid that brings the geometric object $\mathbb{T}M$ into  close contact with operator theory. 
The tangent groupoid decomposes into a family of smooth, closed subgroupoids parametrized by $\lambda \in\R$,  namely the fibers of the composite submersion
\[
\mathbb{T}M \longrightarrow M{\times}\R \longrightarrow \R .
\]
These subgroupoids are 
\begin{equation}
\label{eq-subgroupoids}
\mathbb{T}M_\lambda  \cong 
\begin{cases} 
M{\times} M & \lambda \ne 0 \\ TM & \lambda  = 0 ,
\end{cases}
\end{equation}
where $M{\times}M$ carries the pair groupoid structure and the tangent bundle  $TM$ is a made into a groupoid using the vector group structures on its fibers.  By restricting functions on $\mathbb{T}M$  to these subgroupoids,  Connes obtains algebra homomorphisms
\begin{equation}
\label{eq-eval-t-ne-0}
\varepsilon_\lambda \colon C_c^\infty (\mathbb{T}M )\longrightarrow \mathfrak{K}^\infty(L^2(M))
\end{equation}
for $\lambda \ne 0$ and
\begin{equation}
\label{eq-eval-t-eq-0}
\varepsilon_0 \colon C_c^\infty (\mathbb{T}M)\longrightarrow  C_c^\infty (TM) ,
\end{equation}
where $\mathfrak{K}^\infty(L^2(M))$ is the algebra of smoothing operators on $L^2(M)$, and $C_c^\infty (TM)$ is the fiberwise convolution algebra of smooth, compactly supported functions on the tangent bundle.  The study of these is the next step in Connes' $K$-theoretic approach to index theory---not surprisingly so since at the level of $K$-theory, the morphisms \eqref{eq-eval-t-ne-0} are related to the analytic index of elliptic operators, whereas \eqref{eq-eval-t-eq-0} is related to the symbol class in $K$-theory, and the index theorem is all about relating these two quantities. See \cite[Sec.\ II.5]{Connes94}  or \cite{Higson93}, for further details.

We shall explore similar constructions in the spinorial context, although we shall do so here at the level of supertraces rather than $K$-theory. The first step is to construct a suitable convolution algebra. In Section~\ref{sec-multiplicative} we shall prove that   the bundle $\mathbb{S}$ carries a natural  \emph{multiplicative structure}, which is to say  a smoothly varying and associative family of complex-linear maps 
\begin{equation}
\label{eq-bundle-multiplication1}
\mathbb{S}_{\gamma}\otimes \mathbb{S}_{\eta} \longrightarrow \mathbb{S}_{\gamma \circ \eta} ,
\end{equation}
among the fibers of $\mathbb{S}$, where $(\gamma, \eta) \mapsto \gamma \circ \eta$ is the tangent groupoid composition law.  Using the multiplicative structure, the space $C_c^\infty (\mathbb{T}M, \mathbb{S})$ of smooth, compactly supported sections of $\mathbb{S}$ may be given a convolution product and becomes a complex associative algebra.

Of special interest are the multiplication maps \eqref{eq-bundle-multiplication1}  in the case where $\lambda =0$, so that  $\gamma$ and $\eta$ correspond to tangent vectors $X_m$ and $Y_m$, and  the spaces $\mathbb{S}_{\gamma}$ and $ \mathbb{S}_{\eta} $ are copies of $\wedge^* T_mM$.  We shall compute  the multiplication maps in this case, as follows:

\begin{theorem*} 
When $\lambda = 0$  the morphism \eqref{eq-bundle-multiplication1} is given by the formula 
\begin{equation}
\label{eq-bundle-multiplication2}
\alpha \otimes \beta \longmapsto \alpha \wedge \beta \wedge 
	\exp \bigl(-\tfrac 12 \kappa (X_m,Y_m)\bigr ) ,
\end{equation}
where $\kappa(X_m,Y_m)$   is the Riemannian curvature   $R(X_m,Y_m)$  viewed as an element of  $\wedge^2 T_mM$.
 \end{theorem*}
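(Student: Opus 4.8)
The plan is to obtain the $\lambda = 0$ multiplication as the limit, as $\lambda \to 0$, of the multiplication maps \eqref{eq-bundle-multiplication1} over the pair‑groupoid pieces $\mathbb{T}M_\lambda \cong M \times M$, on which—by the constructions of Sections~\ref{sec-rescaled} and~\ref{sec-multiplicative}—the map \eqref{eq-bundle-multiplication1} is nothing but composition of homomorphisms. Explicitly, writing $\mathbb{S}_{(m_1,m_2,\lambda)} = S_{m_1} \otimes S_{m_2}^{*} = \operatorname{Hom}(S_{m_2}, S_{m_1})$ for $\lambda \neq 0$, the map is the contraction
\[
(S_{m_1}\otimes S_{m_2}^{*}) \otimes (S_{m_2}\otimes S_{m_3}^{*}) \longrightarrow S_{m_1}\otimes S_{m_3}^{*}.
\]
Fix $m \in M$, and pick smooth families $\gamma_\lambda = (m_1(\lambda), m_2(\lambda), \lambda)$ and $\eta_\lambda = (m_2(\lambda), m_3(\lambda), \lambda)$ in the pair groupoids converging as $\lambda \to 0$ to the tangent vectors $X_m$ and $Y_m$ (so $m_1(\lambda) = \exp_{m_2(\lambda)}(\lambda X_m + o(\lambda))$, and similarly for $\eta_\lambda$); their composites $\gamma_\lambda \circ \eta_\lambda$ then converge to $X_m + Y_m$. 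Since $\mathbb{S}$ and its multiplicative structure are smooth, the $\lambda = 0$ map \eqref{eq-bundle-multiplication1} is computed by transporting $\gamma_\lambda$, $\eta_\lambda$ and $\gamma_\lambda \circ \eta_\lambda$ through the trivialisation of $\mathbb{S}$ near $\lambda = 0$ (parallel transport of spinors along geodesics, followed by the Getzler rescaling, which in the limit identifies $\Cliff(T_mM)_{\C}$ with its associated graded $\wedge^{*}T_mM \otimes \C$), composing there, and letting $\lambda \to 0$.

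Carrying out this limit exhibits the answer as a product of two contributions. First, once the three spinor fibers $S_{m_j(\lambda)}$ have been identified with a single fiber, composition of homomorphisms is ordinary Clifford multiplication in $\Cliff(T_mM)_{\C}$; under the Getzler rescaling this passes, in the limit, to the product on the associated graded, namely the wedge product—so this contribution is $\alpha \wedge \beta$. Second, the base points $m_1(\lambda), m_2(\lambda), m_3(\lambda)$ are distinct for $\lambda \neq 0$, so identifying their spinor fibers with a common fiber costs the holonomy of the Levi‑Civita connection around the small geodesic triangle on these three points. Expanded to second order in $\lambda$, this holonomy is the exponential of an element of Clifford order $2$, of size $\lambda^{2}$, built from the spin curvature $R^{S}(X_m,Y_m) = \tfrac14 \sum_{i,j}\langle R(X_m,Y_m) e_i, e_j \rangle c(e_i)c(e_j)$; the rescaling scales this Clifford‑order‑$2$ element by $\lambda^{-2}$, so in the limit the factor becomes the exponential of an element of $\wedge^{2}T_mM$ proportional to $\kappa(X_m,Y_m)$, acting on $\wedge^{*}T_mM$ by wedging. (The same rescaling kills the $O(\lambda^{3})$ corrections to the holonomy, which is why only the exponential survives and no further terms appear.) The two contributions combine to $\alpha \otimes \beta \longmapsto \alpha \wedge \beta \wedge \exp\bigl(c\,\kappa(X_m,Y_m)\bigr)$ for a universal constant $c$, and the computation pins down $c = -\tfrac12$, giving \eqref{eq-bundle-multiplication2}.

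The description of \eqref{eq-bundle-multiplication1} on the pair groupoids and the smoothness statements that license the limit are routine; the one delicate point—and the main obstacle—is the second‑order holonomy computation and the reconciliation of normalisations producing exactly $c = -\tfrac12$. One must confirm that the relevant loop is the geodesic triangle on $m_1(\lambda), m_2(\lambda), m_3(\lambda)$ with the correct enclosed‑area factor, expand both parallel transport and the exponential maps relating these points to order $\lambda^{2}$, and then combine the coefficient $\tfrac14$ in $R^{S}$, the doubling in passing from $\sum_{i,j}$ to $\sum_{i<j}$ in $c(e_i)c(e_j)$, the normalisation in the rescaling isomorphism, and the sign conventions for the Riemann tensor and for holonomy. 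Everything else should reduce to bookkeeping once this constant is fixed.
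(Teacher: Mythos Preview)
Your approach is viable in outline but genuinely different from the paper's, and as written it is a sketch rather than a proof: you explicitly defer the central computation (the constant $c=-\tfrac12$) and also leave unverified that your proposed trivialisation of $\mathbb{S}$ near $\lambda=0$ (parallel transport along geodesics in the first variable, followed by Clifford rescaling) coincides with the smooth structure the paper actually defines via the Rees module $S(\mathbb{T}M)$ and the sheaf $\sheafS_{\mathbb{T}M}$. That second point is not a formality: the local frames $e_I t^{\ell(I)}$ used in the paper are produced through Proposition~\ref{prop-extension-from-diagonal} and the appendix (synchronous extensions), so one must check that your geometric trivialisation agrees with this one up to terms that die under rescaling. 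Until that is done, the claim that the limit of the $\lambda\ne 0$ composition maps computes the $\lambda=0$ map in your chosen frames is not justified.

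The paper avoids both of these obstacles by working algebraically. It never chooses paths or computes holonomy. Instead, the curvature factor emerges from Proposition~\ref{prop-normal-versus-tangent}, which compares the evaluation morphisms $\varepsilon_{(X_m,Y_m)}$ and $\varepsilon_{(X_m-Y_m,0)}$ on $S_0(\mathbb{T}M)$ and shows they differ by $\exp(\tfrac12\kappa(X_m,Y_m))$; this in turn is a direct consequence of the Baker--Campbell--Hausdorff identity (Lemma~\ref{lem-heisenberg}) applied to the degree-lowering operators $\boldsymbol{\nabla_{(X,0)}}$ and $\boldsymbol{\nabla_{(-Y,0)}}$, whose commutator is $\boldsymbol{K(X,Y)}$. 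The multiplication formula then reduces (via Lemma~\ref{lem-pullback-scaling-order}) to an identity of evaluation morphisms on the composable-pair module $S(\mathbb{T}M^{(2)})$, checked first at the zero tangent vector and then transported by the exponentials. The constant $-\tfrac12$ drops out of the BCH formula with no bookkeeping of areas, triangle orientations, or the $\tfrac14$ in $R^S$. Your geometric picture is essentially the integrated version of this commutator computation, and could be made to work, but the paper's route gets the constant for free and never has to match coordinate conventions.
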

 
Continuing, and   following Connes' work, we can construct  algebra homomorphisms
\begin{equation}
\label{eq-eval-t-ne-1}
\varepsilon_\lambda \colon C_c^\infty (\mathbb{T}M, \mathbb{S})\longrightarrow \mathfrak{K}^\infty(L^2(M,S))
\end{equation}
for $\lambda \ne 0$ and
\begin{equation}
\label{eq-eval-t-eq-1}
\varepsilon_0 \colon C_c^\infty (\mathbb{T}M, \mathbb{S})\longrightarrow  C_c^\infty (TM, \wedge^* TM)) ,
\end{equation}
 by restricting sections of $\mathbb{S}$ to the subgroupoids  \eqref{eq-subgroupoids}.
Here $\mathfrak{K}^\infty(L^2(M,S))$ is the algebra of smoothing operators acting on the sections of the spinor bundle, but the algebra $ C_c^\infty (TM, \wedge^*   TM))$ is more interesting.  It is the algebra of smooth, compactly supported sections of the pullback of the exterior algebra bundle of $M$ to $TM$ but with a \emph{twisted} convolution multiplication related to \eqref{eq-bundle-multiplication2}.

In Section~\ref{sec-convolution-algebra} we shall construct and analyze  our family of supertraces 
\[
\Str_\lambda \colon  C_c^\infty (\mathbb{T}M,\mathbb{S} ) \longrightarrow \C ,
\] 
parametrized by $\lambda \in \R$. 
 When $\lambda {\ne}0$ the supertrace is defined by the diagram 
\begin{equation}
\label{eq-Str-lambda-neq-0}
\xymatrix{
C_c^\infty (\mathbb{T}M,\mathbb{S} )
\ar[dr] _{\Str_\lambda} \ar[r]^{\varepsilon_\lambda} & \mathfrak{K}^\infty (L^2 (M,S)) \ar[d]^{\Str} \\
  & \C.
} 
\end{equation}
Here $\varepsilon_\lambda$ is the evaluation morphism  in \eqref{eq-eval-t-ne-1} and $\Str$ is the standard operator supertrace.   The supertrace $\Str_0$  uses the  morphism
\[
\textstyle{\int} \colon  C_c^\infty (TM, \wedge^* TM))  \longrightarrow \C
\]
 that is given by restricting a form to the zero section $M \subseteq TM$, then integrating its top-degree component over $M$ (we use the Riemmanian metric to identify the top degree component with a top-degree differential form).  We define $\Str_0$ by means of the diagram 
 \begin{equation}
\label{eq-Str-lambda-eq-0}
\xymatrix{
C_c^\infty (\mathbb{T}M,\mathbb{S} )
\ar[dr] _{\Str_0} \ar[r]^-{\varepsilon_0} & C_c^\infty (TM, \wedge^* TM)) \ar[d]^{(2/i)^{\dim(M)/2}\cdot \int} \\
  & \C ,
} 
\end{equation}
 where $\varepsilon_0$  is the evaluation morphism  in \eqref{eq-eval-t-eq-1}.

\begin{theorem*} 
If $\sigma\in C_c^\infty (\mathbb{T}M,\mathbb{S})$, then the supertraces 
$
\Str_\lambda (\sigma) \in  \C
$
vary smoothly with $\lambda \in \R$.  \end{theorem*}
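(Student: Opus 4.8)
The plan is to prove that the $\C$-valued density on $M$ whose integral over $M$ equals $\Str_\lambda(\sigma)$ depends smoothly on $\lambda\in\R$, and then to conclude by differentiating under the integral sign. Away from $\lambda=0$ this is immediate, since the diagram \eqref{eq-Str-lambda-neq-0} exhibits $\lambda\mapsto\Str_\lambda(\sigma)$ as the composite of the smooth family of morphisms $\varepsilon_\lambda$ with the operator supertrace; so the entire difficulty is at the origin. To attack it I would first unwind $\varepsilon_\lambda$ for $\lambda\ne 0$. Write $n=\dim M$ and let $\operatorname{vol}$ be the Riemannian density. Choose the usual kind of chart for $\mathbb{T}M$ near the units over $\lambda=0$, in which a point lying over $(m,\lambda)\in M\times\R^\times$ is recorded by the tangent vector $X\in T_mM$ with $\exp_m(\lambda X)$ the corresponding point of $M$, and write $\sigma$ in this chart as a smooth, compactly supported section $\widetilde\sigma(m,X,\lambda)$. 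With the Haar system on the source fibres normalised so that $\varepsilon_0$ takes values in the \emph{fibrewise} convolution algebra of $TM$ as in \eqref{eq-eval-t-eq-1}, the operator $\varepsilon_\lambda(\sigma)$ of \eqref{eq-eval-t-ne-1} is the integral operator on $L^2(M,S)$ whose Schwartz kernel is $|\lambda|^{-n}$ times $\widetilde\sigma\bigl(m,\lambda^{-1}\exp_m^{-1}(m'),\lambda\bigr)$ times the Jacobian of $\exp_m$; in particular its restriction to the diagonal is $|\lambda|^{-n}\,\widetilde\sigma(m,0,\lambda)\in\End(S_m)$. Hence, for $\lambda\ne0$,
\[
\Str_\lambda(\sigma)\;=\;\Str\bigl(\varepsilon_\lambda(\sigma)\bigr)\;=\;|\lambda|^{-n}\int_M\str\bigl(\widetilde\sigma(m,0,\lambda)\bigr)\,\operatorname{vol}(m),
\]
where $\str$ is the fibrewise supertrace on $\End(S_m)$; note that only the diagonal value of $\sigma$, i.e.\ its value at the units, enters, which is consistent with the fact that $\varepsilon_0$ is followed in \eqref{eq-Str-lambda-eq-0} by restriction to the zero section.

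The heart of the argument is the behaviour of $|\lambda|^{-n}\str(\widetilde\sigma(m,0,\lambda))$ near $\lambda=0$, and here the defining feature of $\mathbb{S}$ constructed in Section~\ref{sec-rescaled} enters. In the trivialisation of $\mathbb{S}$ over the units appropriate to $\lambda\ne0$, where the fibre is $\End(S_m)\cong\Cliff(T_mM)\otimes\C$, the passage to the $\lambda=0$ picture of \eqref{eq-S-on-source-fibers}, where the fibre is $\wedge^*T_mM$, is precisely Getzler's rescaling of the Clifford variables. Concretely this means that, for a smooth section $\sigma$, the component of Clifford-degree $j$ of $\widetilde\sigma(m,0,\lambda)$ equals $\lambda^{j}$ times a smooth function of $(m,\lambda)$, whose value at $\lambda=0$ is the degree-$j$ part of the value of $\sigma$ at the unit over $(m,0)$. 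Now the standard fact that the supertrace on $\Cliff(T_mM)\otimes\C$ annihilates all Clifford degrees strictly less than $n$ leaves only the top-degree term, so that $\str(\widetilde\sigma(m,0,\lambda))=\lambda^{n}g(m,\lambda)$ with $g$ smooth, and $g(m,0)$ equals $(2/i)^{n/2}$ times the top-degree component of $\sigma$ at the unit over $(m,0)$ — the constant being exactly the supertrace of the top Clifford monomial on the spinor module. Because $n=\dim M$ is even, $|\lambda|^{-n}\lambda^{n}=1$, and therefore
\[
\Str_\lambda(\sigma)\;=\;\int_M g(m,\lambda)\,\operatorname{vol}(m)
\]
for \emph{all} $\lambda\in\R$. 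This is manifestly smooth in $\lambda$ by differentiation under the integral sign — legitimate since $g$ is smooth and compactly supported in $m$ locally uniformly in $\lambda$ — and comparison with \eqref{eq-Str-lambda-eq-0} shows that its value at $\lambda=0$ is $\Str_0(\sigma)$, so the family is consistent at the origin as well.

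The main obstacle is the bookkeeping in the second paragraph: one must line up the factor $|\lambda|^{-n}$ that the Haar system (the one making $\varepsilon_0$ an algebra homomorphism) forces on $\varepsilon_\lambda$, the $\lambda^{j}$ rescaling of Clifford-degree $j$ built into the smooth structure of $\mathbb{S}$, and the combination of the vanishing of the Clifford supertrace below top degree with the normalising constant $(2/i)^{n/2}$ of \eqref{eq-Str-lambda-eq-0}; it is only because $M$ is even-dimensional that the powers of $|\lambda|$ and of $\lambda$ cancel without an unwanted sign. Once the local model for $\mathbb{S}$ near the units from Section~\ref{sec-rescaled} is in hand, everything else reduces to this cancellation and a routine differentiation under the integral sign.
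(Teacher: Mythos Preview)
Your proposal is correct and follows essentially the same route as the paper: both reduce $\Str_\lambda(\sigma)$ to $\lambda^{-n}\int_M \str(\sigma\vert_{\text{units}})\,d\mu$, then use the rescaling built into $\mathbb{S}$ (your ``Clifford-degree $j$ part scales as $\lambda^j$'' is exactly the paper's local-frame expansion $\sum_I f_I\cdot e_I t^{\ell(I)}$) together with the vanishing of $\str$ below top Clifford degree to cancel the $\lambda^{-n}$ and exhibit a smooth integrand. You are in fact slightly more careful than the paper about $|\lambda|^{-n}$ versus $\lambda^{-n}$ and about the normalising constant $(2/i)^{n/2}$.
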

 
As we have already hinted, this is a sort of ``index theorem without a Dirac operator,''  which relates the operator supertrace to differential forms, and we shall conclude the paper with a brief reminder, following Getzler,  of how the actual index theorem for the Dirac operator quickly follows from it. 

 In future work we aim to study Fr\'echet  and Banach algebra completions  of the convolution algebra $C_c^\infty (\mathbb{T}M, \mathbb{S})$ from a $K$-theoretic perspective, and also consider variations appropriate to other occurences of Getzler's rescaling method, for instance in the  work on the hypoelliptic Laplacian that we already mentioned.

\section{The Tangent Groupoid} 
\label{sec-tangent-groupoid}
In this section we  shall review the construction of the tangent groupoid and discuss its  relations with linear partial differential operators.  The tangent groupoid is a special case of the deformation to the normal cone construction from algebraic geometry, and generally speaking we shall follow the algebraic geometric approach towards its definition.  The particular adaptations needed to handle smooth manifolds as opposed to algebraic varieties are taken from    \cite{HajSaeediSadeghHigson18}. (A more direct account would be possible, see for instance \cite{Higson10}, but it would be a bit less convenient for our later purposes.)

\subsection{Deformation to the Normal Cone}

Throughout this section   $V$ will be a smooth manifold, and  $M$ will be a smoothly embedded submanifold of $V$.  We shall denote by $C^\infty (V)$ the $\R$-algebra of real-valued smooth functions on $V$.

Recall that if $p$ is a positive  integer, then  a smooth, real-valued  function    on $V$  is said to \emph{vanish to order $p$ or more on $M$} if it is locally a sum of products of $p$ or more  smooth, real-valued   functions on $V$, all of which vanish on $M$.  It will be convenient to extend this concept to nonpositive $p$: let us agree that if   $p$ is nonpositive, then \emph{every}   smooth, real-valued  function    vanishes to order $p$ or more.   We shall define the \emph{deformation space} (or \emph{deformation to the normal cone}) $\mathbb{N}_VM$ using the filtration on the  smooth, real-valued  functions by order of vanishing on $M$, as encoded in  the following Rees  \cite{Rees56} construction:

\begin{definition} 
Denote by 
$A(V,M)\subseteq C^\infty(V)[t^{-1},t]$    the $\R$-algebra of those Laurent polynomials
$$
\sum_{p\in \mathbb{Z}} f_p t^{-p}
$$
for which  each coefficient $f_p$  is a  smooth, real-valued  function on $V$ that vanishes to order $p$ or more on $M$ (and all but finitely many $f_p$ are zero).
\end{definition}

\begin{definition}
A \emph{character} of an associative algebra $A$ over $\R$ is a non-zero algebra homomorphism from $A$ to $\R$. The \emph{character spectrum} of $A$, which we shall denote by $\CharSpec(A)$, is the set of all characters of $A$. We equip $\CharSpec(A)$ with the weak topology, that is, the topology with the fewest open sets for which the evaluation morphisms $\varphi \mapsto \varphi (a)$ are continuous.
\end{definition}

In the case of $A(V,M)$,  two kinds of characters present themselves:
\begin{enumerate}[\rm (i)]

\item Simple evaluations at   points in $V$ and a nonzero values in $\R$: given by the formula
\begin{equation}
\label{eq-eval-at-point}
\varepsilon_{(v,\lambda)} \colon \sum_{p\in \mathbb{Z}} f_p t^{-p} \mapsto \sum_{p\in \mathbb{Z}} f_p(v) \lambda^{-p} .
\end{equation}

\item Evaluations at a normal vectors  $X_m\in T_mV/T_mM$    given by the formula
\begin{equation}
\label{eq-eval-at-normal-vector}
\varepsilon_{X_m} \colon
\sum_{p\in \mathbb{Z}} f_p t^{-p} \mapsto \sum_{p\ge 0} \frac{1}{p!}X_m^p(f_p) .
\end{equation}
 Here, in order to evaluate the right-hand side, the normal vector $X_m \in T_mV/ T_m M$ is first  lifted to a tangent vector on $V $, and then extended to a vector field $X$ on $V$, so that the $p$'th iterated derivative $X^p (f_p)$ can be formed; the value of $X^p (f_p)$ at $m$ depends only on the normal vector $X_p$.

\end{enumerate}

\begin{remark}
\label{rem-exponential}
If $M$ has a Riemannian structure then the evaluation \eqref{eq-eval-at-normal-vector} at a normal vector can be written alternatively as
\begin{equation}
\label{eq-geometric-exponential}
\sum_{p\in \mathbb{Z}} f_p t^{-p} \mapsto \lim_{\lambda \to 0}\sum_{p\in \Z} {f_p(\exp_m(\lambda X_m))}{\lambda^{-p}} ,
\end{equation}
which  should help explain the phrase ``evaluation at a normal vector.''   Compare also Proposition~\ref{prop-exponential-formula} below.
\end{remark}

\begin{theorem}[See for example {\cite[Sec.\ 3]{HajSaeediSadeghHigson18}}]  
\label{thm-determination-of-spectrum}
The character spectrum of the algebra $A(V,M)$ consists precisely   of the characters of the form \eqref{eq-eval-at-point} and \eqref{eq-eval-at-normal-vector}.  All of them are distinct, and so the character spectrum may be identified with the disjoint union
\[
NM{\times}\{0\}\,\,\sqcup \,\,V{\times} \mathbb{R}^{\times}.
\]
where $NM = TV\vert _M / TM$ is  the normal bundle of $M$ in $V$, and $\R^{\times} = \R \setminus \{0\}$.  \qed
\end{theorem}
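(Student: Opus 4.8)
The plan is to determine an arbitrary character $\varphi\colon A(V,M)\to\R$ by restricting it to natural subalgebras; I will assume throughout that $M$ is \emph{closed} in $V$, which is the setting relevant to every application in the paper. Since $C^\infty(V)$ sits inside $A(V,M)$ as the ``degree-zero'' part (for $p=0$ the vanishing condition is vacuous), the restriction $\varphi|_{C^\infty(V)}$ is a unital $\R$-algebra character of $C^\infty(V)$, and hence — by the standard fact that such characters are precisely the point evaluations — equals $\mathrm{ev}_v$ for a unique $v\in V$. The element $t$ also lies in $A(V,M)$, so put $\lambda:=\varphi(t)$. One preliminary reduction is used repeatedly: if $\chi\in C^\infty(V)$ is compactly supported with $\chi\equiv 1$ near $v$, then $\varphi(\chi)=\chi(v)=1$, so $\varphi(a)=\varphi(\chi a)$ for every $a=\sum_p f_pt^{-p}\in A(V,M)$; replacing $a$ by $\chi a$ we may assume each coefficient $f_p$ is compactly supported, and then a finite partition-of-unity argument promotes the local condition ``$f_p$ vanishes to order $\ge p$ on $M$'' to genuine membership $f_p\in I_M^{\,p}$, i.e.\ $f_p$ is a \emph{finite} sum of products of $p$ elements of the ideal $I_M=\{f\in C^\infty(V):f|_M=0\}$.

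Now split on $\lambda$. If $\lambda\ne0$: for $g\in I_M$ the element $gt^{-1}$ lies in $A(V,M)$ and $(gt^{-1})\cdot t=g$, so $\varphi(gt^{-1})=g(v)\lambda^{-1}$; writing $f_pt^{-p}$ as a product of factors $gt^{-1}$ with $g\in I_M$ when $p\ge1$, and as $f_p\cdot t^{|p|}$ when $p\le0$, and using multiplicativity, one gets $\varphi(a)=\sum_p f_p(v)\lambda^{-p}=\varepsilon_{(v,\lambda)}(a)$. If $\lambda=0$: first, $v$ must lie in $M$, since if $v\notin M$ then (as $M$ is closed) there is $g\in I_M$ with $g(v)=1$, and $\varphi(gt^{-1})\cdot\varphi(t)=g(v)=1$ contradicts $\varphi(t)=0$; write $m=v$. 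The $\R$-linear functional $\ell\colon I_M\to\R$, $\ell(g):=\varphi(gt^{-1})$, satisfies $\ell(hg)=h(m)\ell(g)$ for $h\in C^\infty(V)$, so it kills $I_M^2$ and descends to a linear functional on the fiber $(I_M/I_M^2)_m\cong N_m^*M$, that is, to a normal vector $X_m\in N_mM$, with $\ell(g)=X_m(g)$ for any vector field extending $X_m$. The Leibniz rule now gives the identity $\tfrac1{p!}\,X^p(g_1\cdots g_p)|_m=\prod_k(Xg_k)|_m$ for $g_1,\dots,g_p\in I_M$ — every way of distributing the $p$ derivatives other than ``one onto each factor'' vanishes at $m$, since the $g_k$ do — whence $\varphi(f_pt^{-p})=\tfrac1{p!}X_m^p(f_p)$, and summing over $p$ yields $\varphi(a)=\sum_{p\ge0}\tfrac1{p!}X_m^p(f_p)=\varepsilon_{X_m}(a)$. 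The converse — that \eqref{eq-eval-at-point} and \eqref{eq-eval-at-normal-vector} really do define characters — is easy: for the second, verify multiplicativity with the same Leibniz identity, or observe it is the pointwise limit as $\lambda\to0$ of the characters $\varepsilon_{(\exp_m(\lambda X_m),\lambda)}$ of Remark~\ref{rem-exponential}.

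It remains to check pairwise distinctness and the topology. The listed characters recover $v$ upon restriction to $C^\infty(V)$ (which separates points of $V$), recover $\lambda$ on $t$ (so the two families do not meet), and within the family $\lambda=0$ recover $X_m$ since $g\mapsto dg_m$ maps $I_M$ onto $N_m^*M$; this yields the bijection $\CharSpec A(V,M)\cong NM{\times}\{0\}\sqcup V{\times}\R^\times$. That it is a homeomorphism for the weak topology follows because the functions $\varphi\mapsto\varphi(a)$, $a\in A(V,M)$, restrict on each slice to the coordinate functions that define the smooth structure of the deformation space $\mathbb{N}_VM$, so the coarsest topology making them continuous is the manifold topology — a point I would simply quote. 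The step I expect to cause the only genuine friction is the passage from the sheaf-level condition ``vanishes to order $\ge p$'' to the ring-level condition $f_p\in I_M^{\,p}$, which has to be routed through the cutoff-at-$v$ reduction and a finite partition of unity; closedness of $M$ in $V$ is likewise essential to the argument that $v\in M$ when $\lambda=0$ (without it the statement can in fact fail). Recognising $\ell$ as a normal vector and carrying out the Leibniz computation are routine.
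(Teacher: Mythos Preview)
The paper does not actually prove this theorem: it is stated with a terminal \qed\ and a citation to \cite[Sec.~3]{HajSaeediSadeghHigson18}, so there is no in-paper argument to compare against. Your argument is correct and is in fact essentially the one given in the cited reference: restrict an arbitrary character to the copy of $C^\infty(V)$ to locate a base point $v$, evaluate on $t$ to obtain $\lambda$, and then split on whether $\lambda$ is zero. The key technical steps---reducing via a cutoff so that each $f_p$ genuinely lies in the ideal power $I_M^{\,p}$, recognising $\ell(g)=\varphi(gt^{-1})$ as a normal vector by showing it kills $I_M^2$ and is $C^\infty(V)$-linear over evaluation at $m$, and the Leibniz identity $\tfrac{1}{p!}X^p(g_1\cdots g_p)\big|_m=\prod_k (Xg_k)\big|_m$---are all handled correctly.

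Two minor remarks. First, your closedness hypothesis on $M\subseteq V$ is exactly what is needed, and it holds in every instance used in the paper (diagonal embeddings); you are right that without it the $\lambda=0$ step can fail, so flagging this is appropriate. Second, the theorem as stated asserts only the set-theoretic identification of the spectrum; the compatibility with the weak topology and the smooth structure is the content of the \emph{next} theorem in the paper, so your closing paragraph about the topology is extra (and can simply be omitted here).
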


\begin{definition} 
We shall denote by  
$\mathbb{N}_VM$
the above character spectrum of $A(V,M)$.
\end{definition}

The topological space $\mathbb{N}_VM$  may be equipped with a smooth manifold structure, as follows:

\begin{definition} 
\label{def-sheaf-of-smooth-functions}
We shall denote by $\sheafA_{\mathbb{N}_VM}$ the sheaf of those real-valued continuous functions on $\mathbb{N}_VM $  that are locally of the form  
\[
\CharSpec\bigr (A(V,M)\bigl )\ni \varphi \longmapsto h(\varphi(f_1),\dots , \varphi(f_k))\in \R,
\]
where $k\in \mathbb{N}$, $f_1,\dots, f_k\in A(V,M)$, and where $h$ is a smooth function on $\R^k$.
\end{definition}

\begin{theorem}[See for example {\cite[Sec.\ 3]{HajSaeediSadeghHigson18}} again.]
The deformation space  $\mathbb{N}_VM$  carries  a unique smooth manifold structure for which  $\sheafA_{\mathbb{N}_VM}$ is the sheaf of smooth, real-valued functions. \qed
\end{theorem}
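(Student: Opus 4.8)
The plan is to reduce the statement to the construction of a single compatible atlas. The uniqueness half is formal: a smooth manifold structure on a topological space is, up to equivalence, the same datum as a sheaf of $\R$-algebras of continuous real-valued functions that is locally isomorphic to $(\R^{n},C^\infty_{\R^{n}})$, so if two smooth structures on $\mathbb{N}_VM$ both have $\sheafA_{\mathbb{N}_VM}$ as their sheaf of smooth functions, then the identity map is an isomorphism of locally ringed spaces between them, hence a diffeomorphism, and the two maximal atlases coincide. So everything comes down to exhibiting charts on $\mathbb{N}_VM$ for which $\sheafA_{\mathbb{N}_VM}$ is the structure sheaf, and checking they overlap smoothly.

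I would build these charts from charts on $V$ adapted to $M$. Near a point of $M$ fix coordinates $(x_1,\dots,x_n,y_1,\dots,y_k)$ on an open $U\subseteq V$, with $n=\dim M$ and $k=\operatorname{codim}M$, in which $U\cap M=\{y_1=\dots=y_k=0\}$; away from $M$ take any chart, with no $y$-variables. The functions $x_i$, $t$, and $z_j:=y_jt^{-1}$ all lie in $A(V,M)$ --- the last because $y_j$ vanishes on $M$ --- and I claim that $\varphi\mapsto(\varphi(x_1),\dots,\varphi(x_n),\varphi(t),\varphi(z_1),\dots,\varphi(z_k))$ is a homeomorphism from the part of $\mathbb{N}_VM$ lying over $U$ onto an open subset of $\R^{n+1+k}=\R^{\dim V+1}$. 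Evaluating on the two kinds of characters of Theorem~\ref{thm-determination-of-spectrum} identifies the map explicitly: $\varepsilon_{(v,\lambda)}$ with $v\in U$, $\lambda\ne0$ is sent to $(x(v),\lambda,y(v)/\lambda)$, and $\varepsilon_{X_m}$ at a normal vector $X_m$ over $m\in U\cap M$ is sent to $(x(m),0,a)$, where $a\in\R^k$ lists the components of $X_m$ in the coordinate trivialization of $NM$. Thus the map is a bijection onto its image, carrying $V{\times}\R^{\times}$ diffeomorphically onto the locus $t\ne0$ and the normal bundle over $U\cap M$ onto the slice $t=0$, and the image is an open subset of $\R^{\dim V+1}$.

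The remaining, and central, step is to match the sheaves. The coordinate functions $x_i,t,z_j$ are themselves in $A(V,M)$, so they are sections of $\sheafA_{\mathbb{N}_VM}$; conversely one must show every $a=\sum_p f_pt^{-p}\in A(V,M)$ becomes a smooth function of $(x,t,z)$ on the chart. In the product coordinates, ``$f_p$ vanishes to order $\ge p$ on $M$'' means exactly that $f_p$ lies in the $p$-th power of the ideal generated by $y_1,\dots,y_k$, and iterated use of Hadamard's lemma then lets one write $f_p=\sum_{|\alpha|=p}y^\alpha h_{p,\alpha}(x,y)$ with $h_{p,\alpha}$ smooth on $U$; substituting $y_j=z_jt$ gives $f_pt^{-p}=\sum_{|\alpha|=p}z^\alpha h_{p,\alpha}(x,zt)$, manifestly smooth in $(x,t,z)$ on the whole chart, so the finite sum over $p$ is smooth too. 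One checks that at $t=0$ this agrees with the value $\tfrac1{p!}X_m^p(f_p)$ dictated by \eqref{eq-eval-at-normal-vector}, using the multinomial identity $X_m^p(y^\alpha)\vert_{y=0}=p!\,z^\alpha$ for $|\alpha|=p$, so that formulas \eqref{eq-eval-at-point} and \eqref{eq-eval-at-normal-vector} glue smoothly across $t=0$. This computation does double duty: it shows the weak topology over $U$ is the pullback of the Euclidean topology, so the chart map is a homeomorphism onto an open set, and it shows $\sheafA_{\mathbb{N}_VM}$ corresponds to $C^\infty$. Two such charts, from overlapping product charts on $V$, have smooth transition map, since it is assembled from the smooth transition map on $V$ together with the substitutions $y=zt$ and $z=y/t$, each smooth on the relevant overlap.

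The two points where I expect to have to be careful are both in the preceding paragraph. First, the algebraic identification of ``vanishing to order $p$ on $M$'' with membership in $I(U\cap M)^p$ in product coordinates, and the consequent Hadamard factorization of $f_p$: this is the step that genuinely uses the definition of the Rees algebra $A(V,M)$, and it is what makes the two character formulas fit together into a single smooth function across $t=0$. Second, one must check that the charts constructed above are honestly open in $\mathbb{N}_VM$ and cover it; for this I would observe that restriction of functions induces an open embedding $\mathbb{N}_U(U\cap M)\hookrightarrow\mathbb{N}_VM$ for every open $U\subseteq V$, so that the ``part over $U$'' is precisely $\mathbb{N}_U(U\cap M)$ and is open, exactly as in \cite[Sec.\ 3]{HajSaeediSadeghHigson18}. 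Since every character of $A(V,M)$ is an evaluation at a point or a normal vector lying in some product chart of $V$, these charts cover $\mathbb{N}_VM$, and the smoothness of the overlaps established above completes the proof.
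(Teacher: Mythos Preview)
Your argument is correct and is the standard construction of the smooth structure on the deformation to the normal cone: adapted coordinates $(x,y)$ on $V$, rescaled fiber coordinates $z_j=y_jt^{-1}$ on $\mathbb{N}_VM$, Hadamard factorization to see that elements of $A(V,M)$ become smooth in $(x,t,z)$, and the same Hadamard trick for the transition maps. The only cosmetic point is that, to get an open image in $\R^{\dim V+1}$, you should take the adapted chart to be a product neighborhood $U_1\times U_2$ with $0\in U_2$, so that the image is $\{(x,\lambda,z):x\in U_1,\ \lambda z\in U_2\}$; you implicitly assume this but it is worth saying.

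There is nothing to compare against in this paper: the theorem is stated with a terminal $\qed$ and a pointer to \cite[Sec.\ 3]{HajSaeediSadeghHigson18}, and the construction there is exactly the one you have written out.
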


Of course every element of $A(V,M)$ can be viewed as a smooth function on $\N_VM$, and from here onwards we shall occasionally refer to $A(V,M)$ as the \emph{coordinate algebra} of the deformation space, and use the more suggestive notation 
\[
A(\mathbb{N}_VM) = A(V,M).
\]
  Not every smooth function on the deformation space belongs to the coordinate algebra, since for instance every function in the coordinate algebra is polynomial in each fiber of the normal bundle. Hence the need to construct the sheaf $\sheafA_{\mathbb{N}_VM}$ above.

\subsection{Vector Fields on the Deformation Space}
\label{subsec-vector-fields-def-space}
In this subsection we shall present the characters of $A(\mathbb{N}_VM)$ that correspond to tangent vectors in a slightly different way.  It is the algebraic counterpart of the observation in Remark~\ref{rem-exponential}, and the adjusted perspective will be quite useful for us later on. We begin with the following very simple fact:
\begin{lemma} 
\label{lem-simple-order-of-vanishing}
Let $M$ be a smooth submanifold of a smooth manifold $V$ and let $X$ be a   vector field  on $V$.  If a smooth function $f$ on $V$  vanishes to order $p$ or more on $M$, then $X(f)$ vanishes to order $p{-}1$ or more. \qed\end{lemma}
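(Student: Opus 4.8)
The plan is to reduce everything to the Leibniz rule for the derivation $X$, together with one elementary observation about the order-of-vanishing filtration. First I would dispose of the trivial range of $p$: if $p \le 1$ then $p-1 \le 0$, and by the convention adopted just above the lemma \emph{every} smooth function on $V$ — in particular $X(f)$ — vanishes to order $p-1$ or more on $M$. So from now on I may assume $p \ge 2$.

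Next I would record the elementary fact that, for each integer $q$, the set $J_q$ of smooth functions on an open subset of $V$ that vanish to order $q$ or more on $M$ is an \emph{ideal} in the algebra of smooth functions on that open set. For $q \le 0$ this is the whole algebra; for $q \ge 1$ it is closed under addition directly from the definition, and it is closed under multiplication by an arbitrary smooth function because any such function can be absorbed into one of the factors of a defining product of $q$ functions vanishing on $M$. This is the one place where the ``$p$ or more'' phrasing in the definition is used, and it is also precisely what will allow me to ignore the fact that $X(g)$ need not vanish on $M$ when $g$ does.

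Then comes the computation itself. Since vanishing to order $p$ or more on $M$ is a local condition, and $X(f)$ is computed locally, I may pass to a neighborhood in which $f$ is a finite sum $f = \sum_{\alpha} g_{\alpha,1}\cdots g_{\alpha,p}$ of products of exactly $p$ smooth functions vanishing on $M$ (a product of more than $p$ such functions is regrouped into a product of exactly $p$ of them by collecting the surplus factors into a single factor, which still vanishes on $M$). Applying the derivation $X$ and the Leibniz rule gives
\[
X(f) \;=\; \sum_{\alpha}\;\sum_{i=1}^{p}\; g_{\alpha,1}\cdots g_{\alpha,i-1}\,X(g_{\alpha,i})\,g_{\alpha,i+1}\cdots g_{\alpha,p}.
\]
Each summand is the product of the $p-1$ functions $g_{\alpha,j}$ with $j \ne i$, all of which lie in $J_1$ — so their product lies in $J_{p-1}$ — with the smooth function $X(g_{\alpha,i})$; by the ideal property each summand lies in $J_{p-1}$, and hence so does the finite sum $X(f)$.

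I do not expect any genuine obstacle here; the paper itself flags this as a ``very simple fact,'' and the argument is essentially a one-line application of Leibniz. The only care needed is bookkeeping: handling the nonpositive-order convention in the edge cases $p \le 1$, reducing ``a sum of products of $p$ or more factors'' to ``a sum of products of exactly $p$ factors,'' and noting that $X(g)$ falling outside $J_1$ does no harm because $J_{p-1}$ is an ideal. None of these steps requires more than a sentence.
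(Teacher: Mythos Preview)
Your proof is correct. The paper itself supplies no argument for this lemma---it is stated with an immediate \qed and called a ``very simple fact''---so there is nothing to compare against; your Leibniz-rule argument is precisely the standard justification one would fill in.
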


\begin{proposition}
\label{prop-extended-vector-field}
Let $X$ be a vector field on $V$. The smooth family of vector fields
\[
\lambda X \colon C^\infty(V{\times}\{\lambda\})\longrightarrow C^\infty (V{\times}\{\lambda\})
\]
defined on the fibers of $\mathbb{N}_VM$ over $\lambda \ne 0$,   extends to a smooth family of vector fields on   fibers of $\mathbb{N}_VM$ over all $\lambda \in \R$ \textup{(}here by a {smooth family of vector fields} on the fibers of a submersion we mean a vector field on the total space that is tangent to each fiber\textup{)}.
 \end{proposition}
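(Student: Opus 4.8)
The plan is to exhibit the extended family as a single derivation of the coordinate algebra $A(\mathbb{N}_VM)=A(V,M)$ that annihilates $t$, and then to recognize it, in local coordinates, as a smooth vector field tangent to the fibers of $\mathbb{N}_VM\to\R$ with the correct specializations. Recall that this submersion is the character $\varphi\mapsto\varphi(t)$, so a smooth family of vector fields on its fibers is the same thing as a derivation of the structure sheaf $\sheafA_{\mathbb{N}_VM}$ that annihilates $t$. It therefore suffices to produce an $\R$-linear derivation $\delta\colon A(V,M)\to A(V,M)$ with $\delta(t)=0$ whose specialization on each fiber $V{\times}\{\lambda\}\cong V$ with $\lambda\ne0$ is the vector field $\lambda X$.

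The candidate is $\delta=t\cdot\widetilde X$, where $\widetilde X$ is the $\R[t^{-1},t]$-linear extension of $X\colon C^\infty(V)\to C^\infty(V)$ to a derivation of $C^\infty(V)[t^{-1},t]$, so that $\widetilde X\bigl(\sum_p f_p t^{-p}\bigr)=\sum_p X(f_p)\,t^{-p}$ and $\widetilde X(t)=0$. An algebra element times a derivation is again a derivation, so $\delta$ is a derivation and $\delta(t)=t\,\widetilde X(t)=0$. The one substantive point is that $\delta$ carries the Rees algebra $A(V,M)$ into itself: after reindexing, $\delta\bigl(\sum_p f_p t^{-p}\bigr)=\sum_p X(f_{p+1})\,t^{-p}$, and by Lemma~\ref{lem-simple-order-of-vanishing}, since $f_{p+1}$ vanishes to order $p{+}1$ or more on $M$, the function $X(f_{p+1})$ vanishes to order $p$ or more — exactly the condition for the result to lie in $A(V,M)$. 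This order-of-vanishing bookkeeping is the one real obstacle in the argument; everything else is formal.

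To see that $\delta$ comes from a genuine smooth vector field $W$ on $\mathbb{N}_VM$ with the stated restrictions, it is cleanest to compute in local coordinates $(x,y)$ on $V$ adapted to $M=\{y=0\}$, in terms of which $\mathbb{N}_VM$ has coordinates $(x,\eta,t)$ with $\eta=t^{-1}y$. Writing $X=\sum_i a_i(x,y)\,\partial_{x_i}+\sum_j b_j(x,y)\,\partial_{y_j}$, a direct computation identifies $\delta$ with the action of the vector field $W=t\sum_i a_i(x,t\eta)\,\partial_{x_i}+\sum_j b_j(x,t\eta)\,\partial_{\eta_j}$, whose coefficients are smooth in $(x,\eta,t)$ and which has no $\partial_t$ component; so $W$ is a smooth vector field tangent to the fibers of $\mathbb{N}_VM\to\R$. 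On the fiber $t=\lambda\ne0$, where $y=\lambda\eta$ and hence $\partial_{y_j}=\lambda^{-1}\partial_{\eta_j}$, the field $W$ restricts to $\lambda X$, as required; on the fiber $t=0$ it restricts to the vertical field $\sum_j b_j(x,0)\,\partial_{\eta_j}$ on the normal bundle, that is, to the field whose value along each fiber $N_mM$ is the constant vector given by the image of $X(m)$. Finally, these local descriptions agree with the globally defined $\lambda X$ on the dense open set where $t\ne0$, so they patch to a global vector field $W$ on $\mathbb{N}_VM$, which is the required extension. (The passage from $\delta$ to $W$ may also be phrased abstractly: any derivation of $A(V,M)$ extends, by the chain rule, to a derivation of the sheaf $\sheafA_{\mathbb{N}_VM}$, hence defines a vector field on $\mathbb{N}_VM$.)
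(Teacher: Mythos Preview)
Your proof is correct and follows essentially the same approach as the paper: both define the derivation $\sum f_p t^{-p}\mapsto \sum X(f_p)\,t^{-(p-1)}$ of $A(V,M)$ (which you write as $t\cdot\widetilde X$), invoke Lemma~\ref{lem-simple-order-of-vanishing} to see it preserves the Rees algebra, and then pass to a vector field on $\mathbb{N}_VM$. The paper simply cites \cite[\S 2]{HajSaeediSadeghHigson18} for the last step, whereas you spell it out via the local coordinate picture and the sheaf-derivation remark.
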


\begin{proof}
If follows from Lemma~\ref{lem-simple-order-of-vanishing}  that the formula 
\begin{equation}
\label{eq-X-derivation}
\sum f_p t^{-p} \longmapsto \sum X(f_p) t^{-(p-1)}
\end{equation}
defines a derivation of the coordinate algebra $A(\mathbb{N}_VM)$, and it follows (see \cite[\S 2]{HajSaeediSadeghHigson18}) that there is a vector field  on the deformation space that implements this derivation. Clearly it restricts to $\lambda X$ on $V{\times}\{ \lambda\}$. 
\end{proof}

\begin{definition}
\label{def-A-zero}
We shall denote by $A_0(\mathbb{N}_VM)$ the quotient of the coordinate ring $A(\mathbb{N}_VM)$ by the ideal generated by $t\in A(\mathbb{N}_VM) $.
\end{definition}

Evidently the algebra $A_0(\mathbb{N}_VM)$ is the associated graded algebra for the decreasing filtration of $C^\infty(V)$ defined by  the order of vanishing on $M$; the algebra   $A_0(\mathbb{N}_VM)$  is  also isomorphic, via the characters \eqref{eq-eval-at-normal-vector}, to the algebra of smooth functions on the normal bundle that are polynomial  in each fiber (and are of uniformly  bounded degree).

Now denote by 
\[
\varepsilon_m\colon A_0(\mathbb{N}_VM) 
\longrightarrow \R
\]
the character obtained by evaluation at the zero normal vector at $m\in M$.  Algebraically it is given by the formula 
\[
\varepsilon _m \colon \sum f_p t^{-p} \longmapsto f_0(m).
\]
In addition, if $X$ is a vector field on $V$, then denote by 
\[
\boldsymbol{X}\colon A_0(\mathbb{N}_VM) \longrightarrow A_0(\mathbb{N}_VM)
\]
the derivation induced from the derivation of $A(\mathbb{N}_VM)$ in   \eqref{eq-X-derivation}; the latter annihilates $t$, and hence descends to the quotient $A_0(\mathbb{N}_VM) $. Since $\boldsymbol{X}$ lowers the grading degree in $A_0(\mathbb{N}_VM) $ by one, it is a locally nilpotent operator. We can therefore form the exponential
\[
\exp(\boldsymbol{X}) \colon A_0(\mathbb{N}_VM) \longrightarrow A_0(\mathbb{N}_VM)
\]
using the power series.  The following is merely a reformulation of the formula \eqref{eq-eval-at-normal-vector}:

\begin{proposition}
\label{prop-exponential-formula}
Let $M$ be a smooth submanifold of a smooth manifold $V$,  let $X$ be a   vector field  on $V$.  If $f\in A_0(\mathbb{N}_VM) $ and $m\in M$, then
\begin{equation*}
\pushQED{\qed} 
\  \varepsilon_{X_m}(f) = \varepsilon_m \bigl(\exp (\boldsymbol{X})f\bigr ) .
\qedhere
\popQED
\end{equation*}
\end{proposition}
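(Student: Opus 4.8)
The statement to prove is Proposition~\ref{prop-exponential-formula}, and as the text signals it is essentially a bookkeeping reformulation. The plan is as follows.

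First I would unwind the definitions on the right-hand side. An element $f \in A_0(\mathbb{N}_VM)$ is represented by a Laurent polynomial $\sum_p f_p t^{-p}$ in $A(\mathbb{N}_VM)$, taken modulo the ideal $(t)$; concretely, by the remark following Definition~\ref{def-A-zero}, $f$ corresponds to the function on the normal bundle that in each fiber is polynomial, and the class of $\sum_p f_p t^{-p}$ only depends on the collection of ``leading jets'' of the $f_p$ along $M$. Applying the derivation $\boldsymbol{X}$ induced by \eqref{eq-X-derivation}: since the ambient derivation sends $\sum f_p t^{-p}$ to $\sum X(f_p) t^{-(p-1)}$ and kills $t$, on $A_0$ it lowers grading degree by one and is therefore locally nilpotent, so $\exp(\boldsymbol{X})$ is a well-defined algebra endomorphism given by a terminating power series on each element. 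Iterating, $\boldsymbol{X}^k$ applied to the class of $\sum_p f_p t^{-p}$ is the class of $\sum_p X^k(f_p)\, t^{-(p-k)}$, so $\exp(\boldsymbol{X}) f$ is the class of $\sum_p \bigl(\sum_{k\ge 0} \tfrac{1}{k!} X^k(f_p)\bigr) t^{-(p-k)}$; regrouping by the new exponent, the degree-zero ($t^0$) part of $\exp(\boldsymbol{X}) f$ has the component $\sum_{p\ge 0} \tfrac{1}{p!} X^p(f_p)$ contributing to it (take $k=p$). Then applying $\varepsilon_m$, which by definition reads off the value at $m$ of the $t^0$-coefficient, yields exactly $\sum_{p\ge 0} \tfrac{1}{p!} X_m^p(f_p)$, which is the definition \eqref{eq-eval-at-normal-vector} of $\varepsilon_{X_m}(f)$.

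The one point that needs a little care, and which I regard as the only real content, is the interaction of the grading/quotient with the regrouping: one must check that after applying $\exp(\boldsymbol{X})$ the only terms landing in degree $0$ that survive modulo $(t)$ are precisely the $k=p$ terms, and that terms with $p < 0$ (where the ``order of vanishing'' condition is vacuous) do not spuriously contribute --- but for $p<0$ the shifted exponent $p - k \le p < 0$ stays negative for all $k \ge 0$, so those terms are $t$-divisible to a positive power and die in $A_0$; likewise for $p > 0$ only $k=p$ hits degree $0$. Equivalently, and perhaps more cleanly, I would argue entirely inside the graded algebra $A_0(\mathbb{N}_VM) = \bigoplus_p F^p/F^{p+1}$ (with $F^p$ the $p$-th step of the vanishing filtration on $C^\infty(V)$): $f$ decomposes as $f = \sum_p \bar f_p$ with $\bar f_p$ the image of $f_p$ in $F^p/F^{p+1}$, the operator $\boldsymbol{X}$ sends $F^p/F^{p+1}$ into $F^{p-1}/F^p$ by Lemma~\ref{lem-simple-order-of-vanishing}, so $\boldsymbol{X}^p \bar f_p \in F^0/F^1$ and $\varepsilon_m$ applied to $\tfrac1{p!}\boldsymbol{X}^p \bar f_p$ is $\tfrac1{p!} X_m^p(f_p)$ evaluated at $m$ (this last identification being the well-definedness remark already recorded after \eqref{eq-eval-at-normal-vector}), while $\boldsymbol{X}^k \bar f_p$ for $k \ne p$ lies in a graded piece $F^{p-k}/F^{p-k+1}$ annihilated by $\varepsilon_m$. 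Summing over $p$ gives the claim. So the proof is just: expand $\exp(\boldsymbol{X})$, track the grading, and invoke Lemma~\ref{lem-simple-order-of-vanishing} together with the already-noted fact that $X^p(f_p)(m)$ depends only on the normal vector $X_m$. I expect no genuine obstacle; the only thing to be vigilant about is the sign/shift bookkeeping in the exponent $t^{-(p-k)}$ and making sure the passage to the quotient $A_0$ is invoked at the right moment so that the infinite-looking sum is actually finite on each homogeneous component.
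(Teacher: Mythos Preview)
Your proposal is correct and is essentially the same approach as the paper's: the paper gives no proof at all beyond the remark that the proposition is ``merely a reformulation of the formula \eqref{eq-eval-at-normal-vector}'' and places the \qed\ inline with the statement. Your argument is precisely that reformulation spelled out---expand $\exp(\boldsymbol{X})$ term by term, track the grading shift from Lemma~\ref{lem-simple-order-of-vanishing}, and observe that $\varepsilon_m$ extracts exactly the $k=p$ terms, recovering \eqref{eq-eval-at-normal-vector}.
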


\begin{remark}
Since $\boldsymbol{X}$ is a derivation of $A_0(\mathbb{N}_VM)$, its exponential is an algebra automorphism. This makes it clear that $\varepsilon_{X_m}$ is indeed a character. 
\end{remark}

The derivations $\boldsymbol{X}$ and $\boldsymbol{Y}$ of $A_0 (\mathbb{N}_VM)$ that are associated to a pair of vector fields commute with one another. Using this, we can compute the extension of the family of vector fields $\{ \lambda X\}$ in Proposition~\ref{prop-extended-vector-field} to $\lambda =0$, as follows:
\begin{equation}
\label{eq-symbol-computation}
\begin{aligned}
\varepsilon_{Y_m}(\boldsymbol{X}f) 
	& =  \frac{d}{dt}\Big \vert _{t=0} \varepsilon_{Y_m}(\exp(t\boldsymbol{X})f)  \\
& =  \frac{d}{dt}\Big \vert _{t=0} \varepsilon_{m}(\exp( \boldsymbol{Y})\exp(t\boldsymbol{X})f)  \\
& =  \frac{d}{dt}\Big \vert _{t=0} \varepsilon_{m}(\exp( \boldsymbol{Y}+t\boldsymbol{X})f)  
 =  \frac{d}{dt}\Big \vert _{t=0} \varepsilon_{Y_m+tX_m}(f)  .\\
\end{aligned}
\end{equation}
The extension therefore acts on the fiber of the normal bundle over $m\in M$ as directional differentiation in the direction $X_m$.

\subsection{Functoriality Properties}

The deformation space is functorial in the following sense.  Given a commuting diagram
\begin{equation}
\label{eq-def-space-functoriality1}
\xymatrix{
V \ar[r]  & V' \\
M \ar[r]  \ar[u] &  M' \ar[u]
}
\end{equation}
in which the vertical maps are inclusions of  submanifolds and the horizontal maps are arbitrary smooth maps, there is an induced map
\begin{equation}
\label{eq-def-space-functoriality2}
\mathbb{N}_{V}M\longrightarrow \mathbb{N}_{V'}M' .
\end{equation}
Indeed the algebra map from $C^\infty (V') $ to $C^\infty (V)$ determined by \eqref{eq-def-space-functoriality1} determines, in turn, an algebra map 
\[
A(\mathbb{N}_{V'}M') \longrightarrow A(\mathbb{N}_{V}M) ,
\]
and so a map on character spectra in the reverse direction. In terms of the determination of the spectrum in Theorem~\ref{thm-determination-of-spectrum}, the formula for the induced map on deformation spaces is the obvious one determined by \eqref{eq-def-space-functoriality1}.
 
 Here are some properties related to functoriality that we shall use later on: 
 \begin{enumerate}[\rm (i)]
 
 \item 
If the horizontal maps in \eqref{eq-def-space-functoriality1}  are open inclusions, then so is the induced map on deformation spaces.

\item Given a diagram 
 \[
 \xymatrix{
   V_1 \ar[r]^{\Phi_1}     & W & 
  V_2 \ar[l]_{\Phi_2}  \\
   M_1 \ar[r]_{\Psi_1}  \ar[u]   & P\ar[u] & 
  M_2 \ar[l]^{\Psi_2} \ar[u]
  }
  \]
of submanifolds  in which the horizontal maps are submersions, if we form the fiber product manifolds
 \[
  V =V_1\underset{W}\times  V_2 = \{ \, (v_1,v_2) \in V_1 {\times} V_2 : \Phi_1 (v_1) = \Phi_2 (v_2) \, \}
  \]
  and 
   \[
  M =M_1\underset{P}\times M_2 = \{ \, (m_1,m_2) \in M_1 {\times} M_2 : \Psi_1 (m_1) = \Psi_2 (m_2) \, \} ,
  \]
 then the natural map 
 \[
 \mathbb{N}_V M \longrightarrow  \mathbb{N}_{V_1} {M_1} \underset{ \mathbb{N}_W P} \times   \mathbb{N}_{V_2} {M_2}
\]
coming from functoriality is a diffeomorphism.

\item If  $M=V$, that is, if the submanifold $M$ is the entire manifold $V$, then there are of course no nonzero normal vectors, and  the deformation space identifies with the product $M{\times}\R$ via   Theorem~\ref{thm-determination-of-spectrum}:
\begin{equation}
\label{eq-trivial-def-space}
\mathbb{N}_M M \cong M {\times} \R
\end{equation}

\end{enumerate}

\subsection{Tangent Groupoid}

The \emph{tangent groupoid} of a smooth manifold $M$, denoted $\mathbb{T}M$, is the deformation space associated to the diagonal embedding 
of $M$ into its square.  That is, 
\[
\mathbb{T}M = \mathbb{N}_{M^2}M ,
\]
where  $M^2 = M{\times}M$ (it will be helpful to use this compressed notation for the powers of $M$  in this subsection).  Throughout the paper we shall identify the normal bundle for the diagonal embedding with the tangent bundle of $M$ via the projection onto the first coordinate.  So as a set 
\begin{equation}
\label{eq-tangent-groupoid-described}
 \mathbb{T}M =TM{\times}\{0\} \,\sqcup  \,M^2{\times}\R^{\times}.
\end{equation}
We shall   use the following notation for the coordinate algebra of the tangent groupoid:
\[
A(\mathbb{T}M) = A ( \mathbb{N}_{M^2}M).
\]

The tangent groupoid inherits a Lie groupoid structure from the pair group\-oid structure on $M{\times}M$ using   the functoriality of the deformation space construction,  as follows.  Consider  the commuting diagram
\[
\xymatrix{
M^2 \ar@<.35ex>[r] \ar@<-.35ex>[r] & M \\
M \ar[u] \ar[r]_{=} &M  \ar[u] 
}
\]
in which the top maps are the  first and second coordinate projections---these are the target and source maps, respectively, for the pair groupoid---while the upwards maps are diagonal maps, viewed as inclusions of submanifolds.  By functoriality of the deformation space construction, the diagram  gives rise to   maps 
\[
\mathbb{N}_{M^2} M  \rightrightarrows \mathbb{N}_MM ,
\]
and therefore to maps
\[
t,s\colon \mathbb{T} M  \rightrightarrows M {\times} \R .
\]
These are the  target and source maps for the tangent groupoid.  

The composition law in the tangent groupoid  is obtained in    the same fashion.  The space of composable pairs of elements in the tangent groupoid is 
 \[
 \begin{aligned}
 \mathbb{T}M^{(2)} 
 	& = \{ \, (\gamma, \eta) \in \mathbb{T}M{\times}\mathbb{T}M : s(\gamma) = t(\eta)\,\} \\
 	& = \mathbb{T}M \underset{ {M{\times}\R}} \times \mathbb{T}M
\end{aligned}
\]
According to the previous subsection, the diagram
\begin{equation}
\label{eq-comp-tangent-groupoid0}
\xymatrix@C=40pt{
M^2   \ar[r]^{p_{2}} & M & M^2\ar[l]_{p_1} \\
M \ar[u] \ar@{=}[r] &M  \ar[u] & M \ar[u] \ar@{=}[l]
}
\end{equation}
in which the top maps are the projections onto the second and  factors respectively, gives rise to a diffeomorphism
\begin{equation}
\label{eq-tangent-groupoid-composable-pairs}
 \mathbb{N}_{M^3}M \stackrel \cong \longrightarrow \mathbb{T}M \underset{M{\times}\R}\times  \mathbb{T}M .
 \end{equation}
On the other hand the  diagram 
\[
\xymatrix{
M^3   \ar[r]& M^2 \\
M \ar[u] \ar[r]_{=} &M , \ar[u]
}
\]
in which the top map is projection onto the first and third factors induces a map
from $ \mathbb{N}_{M^3}M$   to $ \mathbb{N}_{M^2}M$ ,
and hence a  map
\begin{equation}
\label{eq-tangent-groupoid-composition}
 c\colon \mathbb{T}M  ^{(2)} \longrightarrow \mathbb{T}M .
\end{equation}
This is the composition law for the tangent groupoid.   Clearly
\begin{equation}
\label{eq-comp-tangent-groupoid1}
(m_1,m_2,\lambda) \circ (m_2,m_3,\lambda) = (m_1,m_3,\lambda)
\end{equation}
when  $\lambda {\ne}0$.  The formula when $\lambda{=}0$ is only a little harder to derive.  We shall do the calculation in a somewhat roundabout way that will be helpful later. 

Start with the following commutative diagram, in which the various  evaluation characters $\varepsilon$ are labeled by the indicated tangent vectors on the diagonal (which of course project to normal vectors):   
\begin{equation}
\label{eq-composable-tangent-groupoid-pairs}
\xymatrix@C=40pt{
A(\mathbb{N}_{M^2}M) \ar[r]^{p_{12}^*} \ar[d]_{\varepsilon_{(X_m,0_m)}}
	& A (\mathbb{N}_{M^3}M) \ar[d]_{\varepsilon_{(X_m,0_m,-Y_m)}} 
	& A(\mathbb{N}_{M^2}M) \ar[l]_{p_{23}^*}\ar[d]_{\varepsilon_{(Y_m,0_m)}}^ {= \,\,\varepsilon_{(0_m,-Y_m)}}
\\
\R \ar@{=}[r] & \R & \R  . \ar@{=}[l]
} 
\end{equation}
Commutativity follows by direct computation from the definitions. It follows from the diagram that the isomorphism 
\eqref{eq-tangent-groupoid-composable-pairs} maps the point of $\mathbb{N}_{M^3}M$ associated to the character 
$\varepsilon_{(X_m,0_m,-Y_m)}$ to 
$\bigl ( (X_m,0), (Y_m,0)\bigr )\in \mathbb{T}M^{(2)}$.

\begin{lemma}
$
(X_m,0)\circ (Y_m,0) = (X_m+ Y_m, 0)$. 
\end{lemma}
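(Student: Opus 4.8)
The plan is to compute the composition directly from the description of composable pairs recorded in \eqref{eq-composable-tangent-groupoid-pairs}. By the discussion just above the lemma, the pair $\bigl((X_m,0),(Y_m,0)\bigr)$ corresponds under the diffeomorphism \eqref{eq-tangent-groupoid-composable-pairs} to the point of $\mathbb{N}_{M^3}M$ carrying the character $\varepsilon_{(X_m,0_m,-Y_m)}$. The composition law $c$ was defined by functoriality from the projection $p_{13}\colon M^3\to M^2$ onto the first and third factors; on coordinate algebras this is the pullback $p_{13}^*\colon A(\mathbb{N}_{M^2}M)\to A(\mathbb{N}_{M^3}M)$, and hence on character spectra it is precomposition with $p_{13}^*$. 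So the whole task reduces to identifying the character $\varepsilon_{(X_m,0_m,-Y_m)}\circ p_{13}^*$ of $A(\mathbb{N}_{M^2}M)$.

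First I would choose vector fields $X,Y$ on $M$ extending $X_m,Y_m$, let $Z$ be the vector field on $M^3$ that is $X$ on the first factor, $0$ on the second and $-Y$ on the third, and let $W$ be the vector field on $M^2$ that is $X$ on the first factor and $-Y$ on the second. Since $Dp_{13}$ simply forgets the middle tangent component, $Z$ and $W$ are $p_{13}$-related, and therefore $Z^k(h\circ p_{13})=(W^kh)\circ p_{13}$ for every smooth $h$ on $M^2$ and every $k\ge 0$. Now take $g=\sum_p g_pt^{-p}\in A(\mathbb{N}_{M^2}M)$. Using $Z$ as the chosen lift of $(X_m,0_m,-Y_m)$ in the defining formula \eqref{eq-eval-at-normal-vector} and evaluating at the diagonal point $(m,m,m)$, we get $\varepsilon_{(X_m,0_m,-Y_m)}(p_{13}^*g)=\sum_{p\ge 0}\tfrac{1}{p!}\,Z^p(g_p\circ p_{13})\big\vert_{(m,m,m)}=\sum_{p\ge 0}\tfrac{1}{p!}(W^pg_p)\big\vert_{(m,m)}$, which by \eqref{eq-eval-at-normal-vector} again equals $\varepsilon_{(X_m,-Y_m)}(g)$, where $(X_m,-Y_m)\in T_{(m,m)}M^2$ now serves as a lift of the corresponding normal vector and $W$ as the extending vector field. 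Since $(X_m,-Y_m)$ and $(X_m+Y_m,0_m)$ differ by the diagonal vector $(-Y_m,-Y_m)$, they represent the same normal vector, so $\varepsilon_{(X_m,-Y_m)}=\varepsilon_{(X_m+Y_m,0_m)}$; under the standing identification of the normal bundle of the diagonal with $TM$ via the first coordinate this is evaluation at the tangent vector $X_m+Y_m$. Hence $(X_m,0)\circ(Y_m,0)=(X_m+Y_m,0)$. One could instead bypass the algebra entirely and invoke only that the induced map $\mathbb{N}_{M^3}M\to\mathbb{N}_{M^2}M$ is ``the obvious one'', namely $Dp_{13}$ modulo the diagonal on the $\lambda=0$ part, which sends $[(X_m,0_m,-Y_m)]$ to $[(X_m,-Y_m)]=X_m+Y_m$.

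I do not expect a genuine obstacle here; the whole argument is bookkeeping. The two points that need a little care are: (i) that the value produced by \eqref{eq-eval-at-normal-vector} depends only on the normal vector and not on the chosen lifting vector field — this is used in passing between the two applications of \eqref{eq-eval-at-normal-vector}, and it is what lets one replace $W$ by the more obvious extension of $X_m+Y_m$; and (ii) keeping the identifications of the normal bundles of the various diagonals with tangent bundles consistent, so that the two minus signs (the $-Y_m$ in the middle column of \eqref{eq-composable-tangent-groupoid-pairs}, and the $-Y_m$ in $(X_m,-Y_m)$) cancel to produce $+Y_m$ in the final answer, in agreement with the $\lambda\ne 0$ formula \eqref{eq-comp-tangent-groupoid1}.
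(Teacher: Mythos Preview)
Your argument is correct and lands on exactly the same identification as the paper: the character of $A_0(\mathbb{T}M)$ corresponding to $(X_m,0)\circ(Y_m,0)$ is $\varepsilon_{(X_m,0_m,-Y_m)}\circ c^*=\varepsilon_{(X_m,-Y_m)}=\varepsilon_{(X_m+Y_m,0)}$, the last equality because the two lifts differ by a diagonal tangent vector.

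The packaging differs slightly. The paper wraps the computation in a commutative square involving $A_0(\mathbb{T}M)\otimes_{\R}A_0(\mathbb{T}M)\to A_0(\mathbb{T}M^{(2)})$ and the product of the two individual characters, then asserts the top composite equals $\varepsilon_{(X_m,-Y_m)}$ by functoriality without spelling out the step. You instead prove that step explicitly via $p_{13}$-related vector fields and the formula \eqref{eq-eval-at-normal-vector}, and only mention the tensor-product viewpoint in passing. Your version is more self-contained for this lemma; the paper's diagram is logically redundant here but foreshadows the module-theoretic computations (with $S_0(\mathbb{T}M)$ and exponentials of derivations) that appear later in Section~\ref{sec-multiplicative}.
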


\begin{proof}Consider 
the diagram 
\begin{equation*}
\xymatrix@C=60pt{
A_0(\mathbb{T}M) \ar[r]^{c^*} 
& A_0 (\mathbb{TM}^{(2)}) \ar[r]^{\varepsilon_{(X_m,0_m,-Y_m)}} 
& \R 
\\
& A_0( \mathbb{T}M) \otimes_{\R}  A_0( \mathbb{T}M) \ar[u] \ar[r]_-{\varepsilon_{(Y_m,0_m)}\otimes \varepsilon_{(X_m,0)} } & \R \otimes_{\R} \R
\ar[u]_{\cong}
}
\end{equation*}
 in which the leftmost map is induced from  the composition law \eqref{eq-tangent-groupoid-composition}, while the left vertical map is induced from 
 \[
 f_1\otimes f_2 \longmapsto \Bigl [ (m_1,m_2,m_3) \mapsto f_1(m_1,m_2)f_2(m_2,m_3)\Bigr ] ,
 \]
 or equivalently from the inclusion of $\mathbb{T}M^{(2)}$ into $\mathbb{T}M\times \mathbb{T}M$. It follows from \eqref{eq-composable-tangent-groupoid-pairs}   that the square in the diagram is commutative.  The composition of morphisms along the top is the character of $A_0(\mathbb{T}M) $ associated to    $(X_m,0)\circ(Y_m, 0)$.  But this composition is 
\[
\varepsilon _{(X_m,-Y_m)}\colon A_0(\mathbb{T}M)\longrightarrow \R
\]
and $ \varepsilon _{(X_m,-Y_m)} = \varepsilon _{(X_m+ Y_m, 0)} $ since the tangent vectors 
$
(X_m,-Y_m)$ and $ (X_m+Y_m,0) $ at the diagonal point $(m,m)$ determine the same normal vector for the diagonal embedding. This completes the proof.
 \end{proof}

\subsection{Families of Differential Operators}

Let $D$ be a linear partial differential operator  on $M$.  The source fibers  of the pair groupoid $M{\times}M$ have the form $M{\times}\{m\}$, and if we place a copy of $D$ on each one, then we obtain a smooth, equivariant family of linear partial differential operators   on the source fibers.  

A small extension of the above produces a family of linear partial differential operators on the source fibers of the tangent groupoid.  First, if $m\in M$, then denote by $D_m$ the translation-invariant \emph{model operator} on $T_mM$ that is obtained from $D$ by freezing coefficients in a local coordinate expression for $D$ and dropping lower order terms.

\begin{theorem}
 \label{thm-families-of-ops}
Let $M$ be a smooth manifold and let $D$ be a linear partial differential operator on $M$ of order $q$.  The formula
\[
D_{(m,\lambda )} = 
\begin{cases} 
	\lambda ^q D & \lambda \ne 0 \\
	D_m & \lambda  = 0
\end{cases}
\]
defines a smooth and equivariant family of differential operators on the source fibers of $\mathbb{T}M$.
\end{theorem}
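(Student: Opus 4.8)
The plan is to reduce the statement to a computation in a single coordinate chart, using the functoriality of the deformation space to localize, and then to invoke the fact --- which is Proposition~\ref{prop-extended-vector-field} together with the computation~\eqref{eq-symbol-computation} --- that the vector fields $\lambda\,\partial_{x_i}$ extend smoothly across $\lambda=0$, where they act as directional differentiation on the tangent spaces.

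Smoothness of a family of operators on the source fibers is a local condition on $\mathbb{T}M$, and both $\lambda^q D$ and the model operator $D_m$ are compatible with restricting $D$ to an open subset of $M$; so, using the open-inclusion functoriality of the deformation space, I would reduce to the case in which $M$ is an open subset $\Omega$ of $\R^n$. It then follows from the description of the coordinate algebra that $\mathbb{T}\Omega$ is the open subset $\{(y,\eta,\lambda):y\in\Omega,\ y+\lambda\eta\in\Omega\}$ of $\R^n{\times}\R^n{\times}\R$, with global coordinates $y=m_2$, $\eta=t^{-1}(m_1-m_2)$, $\lambda=t$; the source map is $(y,\eta,\lambda)\mapsto(y,\lambda)$, so the source fibers are the $\eta$-spaces, carrying for $\lambda\ne 0$ the operator $\lambda^q D$ acting in the target coordinate $m_1=y+\lambda\eta$.

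Writing $D=\sum_{|\alpha|\le q}a_\alpha(x)\,\partial_x^\alpha$ in the chart and using $\partial_{m_{1,i}}=\lambda^{-1}\partial_{\eta_i}$ on a source fiber with $\lambda\ne 0$, one obtains
\[
D_{(y,\lambda)}\;=\;\lambda^q D\;=\;\sum_{|\alpha|\le q}\lambda^{\,q-|\alpha|}\,a_\alpha(y+\lambda\eta)\,\partial_\eta^\alpha .
\]
Each coefficient $\lambda^{\,q-|\alpha|}a_\alpha(y+\lambda\eta)$ is a smooth function of $(y,\eta,\lambda)$ on all of $\mathbb{T}\Omega$ --- the exponent being non-negative precisely because $D$ has order $q$ --- so the displayed formula manifestly defines a smooth family of differential operators over $M{\times}\R$, and setting $\lambda=0$ kills every term with $|\alpha|<q$, leaving $\sum_{|\alpha|=q}a_\alpha(y)\,\partial_\eta^\alpha=D_y$. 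As $D_y$ is chart-independent and smoothness is local, this proves smoothness and identifies the $\lambda=0$ restriction. Equivariance needs no computation: for $\lambda\ne 0$ the same operator $D$ is placed on every source fiber of the pair groupoid, whose right translations are the identity under the canonical identification of the source fibers with $M$, while at $\lambda=0$ the operator $D_m$ on $T_mM$ is translation-invariant, which is equivariance for the action of $T_mM$ on itself.

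I expect no real obstacle here; the one point requiring care is the bookkeeping of powers of $t=\lambda$, so that after rescaling no negative power of $t$ survives --- and this is exactly what the hypothesis $\order(D)\le q$ ensures. One could instead run the argument invariantly inside the coordinate algebra, expressing $\lambda^q D$ as a finite sum of products of multiplication operators by globally defined functions on $\mathbb{T}M$ (pulled-back coefficients and powers of $t$) with iterated extended vector fields of Proposition~\ref{prop-extended-vector-field}; I would present the coordinate version for concreteness.
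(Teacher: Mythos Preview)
Your proposal is correct and is essentially the paper's own argument, carried out in explicit local coordinates rather than invariantly: the decomposition $\lambda^q D=\sum_{|\alpha|\le q}\lambda^{q-|\alpha|}a_\alpha(y+\lambda\eta)\,\partial_\eta^\alpha$ is exactly the paper's factorization $\lambda^q D=\sum \lambda^{q-d}\cdot h\cdot(\lambda D_1)\cdots(\lambda D_d)$ once one writes $\lambda\,\partial_{x_i}=\partial_{\eta_i}$, which is the coordinate form of Proposition~\ref{prop-extended-vector-field} and~\eqref{eq-symbol-computation}. The invariant alternative you sketch at the end \emph{is} the paper's proof.
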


\begin{proof}
We need to show that if $f$ is a smooth function on $\mathbb{T}M$, and if we apply the above family of differential operators to $f$ fiberwise, then the result is again a smooth function on $\mathbb{T}M$.  

In fact it suffices to prove this when  $D$ is a vector field, since $\lambda ^q D$  is a sum of products 
\[
\lambda ^{q-d}\cdot h\cdot   \lambda  D_1\cdot \ldots \cdot  \lambda D_d 
\]
 where  $h$ is a smooth function,  the operators $D_i$ are vector fields, and where $d \le q$, and since the model operator $D_m$ is the sum of those  products  
 \[
  h (m)\cdot   D_{1,m}\cdot \ldots \cdot   D_{d,m} 
  \]
  where  $q=d$. The vector field case is  handled by Proposition~\ref{prop-extended-vector-field} and  \eqref{eq-symbol-computation}.
\end{proof}

\section{A Rescaled Spinor Bundle}
\label{sec-rescaled}
In this section we shall explain how to construct a ``rescaled'' spinor bundle on the tangent groupoid of a   Riemannian spin manifold.

\subsection{Clifford Algebras}
\label{sec-clifford}

We begin with a very quick review of some points in Clifford algebra theory  to fix notation and terminology.  Let $E$ be a finite-dimensional euclidean vector space.  We shall denote by  $\clifford_{\R}(E)$ the real algebra generated by a copy of $E$ subject to the relations
\[
ef + fe = -2\langle e,f\rangle 1
\]
for all $e,f\in E$, and we shall denote by $\clifford_{\C}(E)$, or simply $\clifford (E)$, its complexification. 
We shall generally follow the conventions in the monograph \cite{Meinrenken13}, and in terms of that book, $\clifford (E)$  is the complex Clifford algebra associated to the bilinear form $B$ given by the \emph{negative} of the inner product on $E$.

There is a real-linear \emph{quantization isomorphism}
\begin{equation}
\label{eq-quantization-map}
q \colon \wedge ^* E \longrightarrow \clifford_{\R} (E)
\end{equation}
as in \cite[Sec.\ 2.2.5]{Meinrenken13}.  If $\{ e_1,\dots, e_n\}$ is any orthonormal basis for $E$, then 
  \[
q(e_{i_1}\wedge \cdots \wedge e_{i_d}) = e_{i_1}\cdot \ldots \cdot e_{i_d}
\]
for all indices $i_1< \cdots < i_d$. The quantization isomorphism equips the Clifford algebra with a vector space grading. This  grading is not compatible with the multiplication operation in the Clifford algebra, but the underlying increasing filtration \emph{is} compatible with multiplication.   We shall write it   as 
\begin{equation}
\label{eq-clifford-filtration}
\C \cdot I = \clifford_{0}(E) \subseteq 
 \clifford_{1}(E)\subseteq \cdots \subseteq  \clifford_{\dim(E)}(E) = \clifford(E) ,
\end{equation}
where $\Cliff_d (E)$ is the sum of all $q(\wedge ^aE)$ with $a\le d$.

\begin{remark}
For later purposes it  will be convenient to extend this filtration to all $d\in \Z$ so that 
\begin{equation}
\label{eq-clifford-filtration2}
\clifford_{d}(E)  = 
\begin{cases} 
0 & d<0 \\
 \clifford(E) & d> \dim(E) .
\end{cases}
\end{equation}
   The associated ``Clifford order'' of $0\in \clifford (E)$ will be  $-\infty$.  Observe that the quantization map gives rise to an isomorphism 
\begin{equation}
\label{eq-clifford-filtration3}
\wedge^d E \stackrel \cong \longrightarrow \clifford_d(E) / \clifford_{d-1}(E)
\end{equation}
for all $d$.
\end{remark}

The subspace $q(\wedge^2 E)\subseteq \clifford_{\R}(E)$ is closed under the ordinary commutator bracket in the Clifford algebra, and so acquires a Lie algebra structure.  Moreover 
\[
\bigl [ q(\wedge^2 E), q(\wedge ^1 E)\bigr ] \subseteq q(\wedge ^1 E) ,
\]
so that the Lie algebra $q(\wedge ^2 E)$ acts on $E \cong q(\wedge ^1 E)$ by commutator bracket in the Clifford algebra.  This action determines a Lie algebra homomorphism
\[
q(\wedge^2 E) \longrightarrow \mathfrak{gl}(E) ,
\]
and indeed  Lie algebra  \emph{isomorphism}
\begin{equation}
\label{eq-adjoint-map}
 q(\wedge^2 E) \stackrel \cong \longrightarrow \mathfrak{so}(E).
 \end{equation}
 
 Now define a vector space isomorphism $\gamma\colon \mathfrak{so}(E) \to \wedge^2 E$ by means of   the following commuting diagram:
\begin{equation}
\label{eq-gamma-map}
 \xymatrix{
 & q(\wedge ^2 E )\ar[dr]^{\eqref{eq-adjoint-map}} & \\
 \wedge^2 E \ar[ur]^q \ar[rr]_{\gamma^{-1}}&&   \mathfrak{so}(E) . 
 }
\end{equation}
We shall not use it, but $\gamma$ is given by the beautiful  explicit formula
\[
\gamma (T) = \tfrac 14 \sum T(e_i) \wedge e_i .
\]
See \cite[Section 2.2.10]{Meinrenken13}.

\subsection{Spinor Bundles}
From now on  $M$ will be an even-dimensional,  Riemannian spin manifold. We shall     review some facts concerning   spinors on $M$.

Let $S\to M$ be a complex irreducible  spinor vector bundle, equipped with the canonical Riemannian  connection $\nabla$ (also known as the  Levi-Civita connection), as in   \cite[Sec.\  II.4]{LawsonMichelsohn89} or  \cite[Ch.\ 4]{Roe98}.  The bundle $S$ and   connection $\nabla$ have the following properties: 
\begin{enumerate}[\rm (i)]
\item  $S$ is a smooth, $\Z/2$-graded Hermitian vector bundle over $M$.

\item There is a morphism of smooth real vector bundles 
\[
c \colon TM \longrightarrow \End(S)^{\text{odd}}_{\text{\rm skew-adjoint}}
\]
with 
\[
c(X)^2 = - \|X\|\cdot I 
\]
for every vector field $X$.

\item The morphism $c$   induces an irreducible representation of $\clifford_{\C} (T_mM)$ on $S_m$ for every $m\in M$, and indeed a $\Z/2$-graded algebra isomorphism 
\begin{equation}
\label{eq-Clifford-action}
c\colon \clifford_{\C} (T_mM)\stackrel \cong \longrightarrow \End(S_m).
\end{equation}

 \item If $X$ and $Y$ are vector fields on $M$, and if $s$ is a smooth section of $S$, then
 \begin{equation}
 \label{eq-compatible-connection-on-S}
 \nabla_Y (c(X) s ) = c(\nabla_Y^{\text{\rm LC}}(X)) s + c(X)\nabla_Y s  ,
 \end{equation}
 where $\nabla^{\text{\rm LC}}$ is the Levi-Civita connection on $TM$.

\end{enumerate}
We shall also use in a crucial way  a simple formula that relates  the curvature operator 
\[
K(X,Y) = \nabla _X \nabla _Y - \nabla_Y \nabla_X - \nabla_{[X,Y]}
\]
of the Riemannian connection on $S$ to  the  Riemann curvature tensor
\[
R(X,Y) =  \nabla^{\text{\rm LC}} _X \nabla ^{\text{\rm LC}}_Y - \nabla^{\text{\rm LC}}_Y \nabla^{\text{\rm LC}}_X - \nabla^{\text{\rm LC}}_{[X,Y]}
\]
on $TM$.  For any pair of tangent vectors $X_m,Y_m\in T_mM$ we have, of course  
\[
R(X_m,Y_m)  \in \mathfrak{so}(T_mM) 
\quad \text{and} \quad 
K(X_m,Y_m)\in \End (S_m)
.
\]
Moreover
\begin{equation}
\label{eq-K-from-gamma}
 K(X,Y) =  c \circ  q\circ \gamma \bigl  (R(X ,Y)\bigr )   
\end{equation}
where $\gamma$ is the morphism \eqref{eq-gamma-map}.

\subsection{The Scaling Filtration}

Denote by  $S\boxtimes S^\ast $ the bundle over $ M{\times }M$ whose fiber over $(m_1,m_2)$ is $S_{m_1}\otimes S^*_{m_2}$.  In this subsection we shall   construct a decreasing filtration of the space of smooth sections of $S\boxtimes S^\ast $ that is based on the vanishing behavior of sections near the diagonal in $M{\times}M$. The  construction uses the following \emph{Getzler filtration} of the algebra of linear partial differential operators acting on the smooth sections of $S$ over $M$.

\begin{definition}
Let $D$ be a linear partial differential operator acting on the smooth sections of the spinor bundle $S$ over $M$.  We   say that $D$ has  \emph{Getzler order $p$ or less} if in a neighborhood of any point in $M$ it can be expressed as a finite sum of operators of the form
\[
f \cdot D_1\cdots D_p ,
\]
where $f$ is a smooth function and each $D_j$ is either a covariant derivative $\nabla_X$, or a Clifford multiplication operator $c(X)$, or the identity operator. 
\end{definition}

\begin{examples} If $X$ is any vector field on $M$, then  $\GetzlerOrder (\nabla_X) \le 1$, and the order is equal to $1$ unless $X=0$. In addition $\GetzlerOrder(c(X)) \le 1$, and again the order is equal to $1$ unless $X=0$.
\end{examples}

The construction  also uses the following increasing filtration of the fibers of $S{\boxtimes}S^*$ over the diagonal in $M{\times}M$.  Using  \eqref{eq-Clifford-action}, these fibers  admit canonical identifications 
\begin{equation}
\label{eq-canonical-identifications}
S_m^{\phantom{*}}\boxtimes S^\ast_m  \cong \End(S_m) \cong  \clifford (T_mM) ,
\end{equation}
and we equip them with the canonical increasing  Clifford algebra  filtration  from \eqref{eq-clifford-filtration}.  

\begin{definition}
Let $q\in \Z$. 
We shall say that a smooth section of  $S\boxtimes S^\ast $ has \emph{Clifford order $q$ or less}  if its value at each diagonal point $(m,m)$ lies in the  order $q$ subspace $\clifford_q(T_mM)\subseteq \clifford(T_mM)$.    
\end{definition}

\begin{examples}
If $q$ is negative, then a section with Clifford order $q$ or less at $m\in M$ must vanish. At the other extreme, every section has Clifford order $\dim(M)$ or less.
\end{examples}

In the following definition we shall consider linear partial differential operators $D$ that act on the smooth sections of the spinor bundle $S$ over $M$. We shall consider $D$ as also acting on the smooth sections of $S \boxtimes S^*$ over $M{\times}M$ by differentiation in the first factor of $M{\times} M$  alone.

\begin{definition}
\label{def-scaling-order}
Let $p\in \mathbb{Z}$. We shall say that   a section $\sigma$ of $S \boxtimes S^*$ over $M{\times}M$ has \emph{scaling order  $p$ or more}  if 
\[
\CliffordOrder (D\sigma) \le q-p
\]
for every differential operator $D$ of Getzler order $q$ or less.  If $m\in M$, then we shall say that $\sigma$ has \emph{scaling order $p$ near $m$} if the above condition holds in a neighborhood of $(m,m)$.
\end{definition}

\begin{remark}
The definition can be compared as follows to the ordinary notion of  vanishing to order $p$ along the diagonal of a real-valued function  on $M{\times}M$ that we     used to construct the tangent groupoid. Suppose we write $\operatorname{val} (f) = -\infty$ if $f$ vanishes on the diagonal, while $\operatorname{val}(f) = 0$ otherwise.  Then   
 $f$  vanishes to order $p$ on the diagonal in $M{\times} M$ if and only if 
\[
\operatorname{val}(D f) \le \order (D) - p
\]
 for every   linear partial differential operator $D$ on $M$ acting on functions on $M{\times}M$ through the first factor.  \end{remark}

\begin{example}
Since the Clifford order of a section of $S \boxtimes S^*$ is never more than $\dim(M)$, every section has scaling order $-\dim(M)$, or more.
\end{example}
 
Let us record two easy consequences of the definition.

\begin{lemma}
\label{lem-S-module-over-A}
If a smooth section $\sigma$ of $S\boxtimes S^*$ has  {scaling order}   $p_1$ or more, and if a smooth function $f$ on $M{\times}M$ vanishes to order $p_2$ or more on the diagonal of $M{\times} M$, then the section $f\cdot \sigma$ has scaling order  $p_1+ p_2$ or more. \qed
\end{lemma}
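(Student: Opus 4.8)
The plan is to reduce the statement to the compatibility between the Getzler filtration and the notion of vanishing-to-order-$p$ that underlies the deformation-space construction. Concretely, I would argue as follows. Suppose $\sigma$ has scaling order $p_1$ or more, and $f$ vanishes to order $p_2$ or more on the diagonal; we must show that for every differential operator $D$ acting on sections of $S\boxtimes S^*$ through the first factor, with $\GetzlerOrder(D)\le q$, one has $\CliffordOrder\bigl(D(f\cdot\sigma)\bigr)\le q-(p_1+p_2)$, and this locally near each diagonal point. The natural device is a Leibniz-type expansion: writing $D$ locally as a sum of terms $g\cdot D_1\cdots D_q$ with each $D_j$ a covariant derivative $\nabla_X$, a Clifford multiplication $c(X)$, or the identity, one pushes $D$ past the scalar function $f$. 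The key observation is that $c(X)$ commutes with multiplication by a scalar function, while $\nabla_X(f\cdot\sigma) = X(f)\cdot\sigma + f\cdot\nabla_X\sigma$, so distributing the $q$ factors over $f$ produces a sum $\sum_k D'_k(f)\cdot D''_k\sigma$ where $D'_k$ is a scalar differential operator of some order $j$ (only the $\nabla_X$ factors that ``land on $f$'' contribute, and $c(X)$ factors never do) and $D''_k$ has Getzler order at most $q-j$.

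Now I would invoke the elementary fact — the scalar analogue recorded in the Remark following Definition~\ref{def-scaling-order}, which is just the defining property of vanishing to order $p$ on the diagonal — that if $f$ vanishes to order $p_2$ or more and $D'_k$ has order $j$, then $D'_k(f)$ vanishes to order $p_2 - j$ or more on the diagonal; in particular if $j < p_2$ the function $D'_k(f)$ vanishes on the diagonal, so $D'_k(f)\cdot D''_k\sigma$ vanishes at every diagonal point and hence has Clifford order $-\infty\le q-(p_1+p_2)$ there. For the remaining terms, where $j\ge p_2$: since $\sigma$ has scaling order $p_1$ or more and $D''_k$ has Getzler order $\le q-j$, we get $\CliffordOrder(D''_k\sigma)\le (q-j)-p_1$. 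Multiplying a section of Clifford order $\le r$ by a scalar function can only decrease (or preserve) the Clifford order at a diagonal point — it cannot increase it, since $\clifford_r(T_mM)$ is a linear subspace closed under scalar multiplication — so $\CliffordOrder\bigl(D'_k(f)\cdot D''_k\sigma\bigr)\le (q-j)-p_1 \le (q-p_2)-p_1 = q-(p_1+p_2)$, using $j\ge p_2$. Summing over $k$ and using that $\clifford_{q-(p_1+p_2)}$ is a linear subspace gives the claim, and all of this is visibly local near each diagonal point, so the ``near $m$'' version follows verbatim.

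The one point requiring genuine care — and the step I expect to be the main obstacle — is the bookkeeping in the Leibniz expansion: one must verify that when the $q$ operator factors of a Getzler-order-$q$ term are distributed across the product $f\cdot\sigma$, the part acting on $f$ really does behave as an honest scalar differential operator of order equal to the number of $\nabla$-factors assigned to it (so its action on $f$ is computed purely from the $X(f)$-type Leibniz terms, with no leftover Clifford pieces), while the complementary part retains Getzler order bounded by $q$ minus that number. This is routine but needs to be spelled out because $\nabla_X$ acting on $S\boxtimes S^*$ is not literally a scalar operator; the point is that on the scalar function $f$ the connection reduces to $X(f)$, and the Clifford-multiplication factors simply commute through $f$. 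Once this expansion is set up cleanly, the rest is the two-case estimate above together with the linearity of the Clifford filtration subspaces.
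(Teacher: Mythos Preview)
Your argument is correct and is precisely the routine Leibniz-expansion verification that the paper has in mind; the paper records the lemma with a bare \qed\ and no proof, regarding it as an immediate consequence of the definitions. Your careful bookkeeping of how the factors $D_1\cdots D_q$ distribute over $f\cdot\sigma$---with Clifford multiplications commuting through $f$ and covariant derivatives contributing $X(f)$-terms---together with the two-case estimate ($j<p_2$ versus $j\ge p_2$) is exactly what is needed, and the point you flag as requiring care is indeed the only nontrivial step.
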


In particular, the  sections of $S \boxtimes S^*$  of scaling order $p$, or more, form a $C^\infty (M{\times} M)$-module.

\begin{lemma}
If a smooth section $\sigma$ of $S\boxtimes S^*$ has  {scaling order}   $p$ or more, and if $D$ has Getzler order $q$ or less, then $D\sigma$ has scaling order $p{-}q$ or more.  \qed
\end{lemma}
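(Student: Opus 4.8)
The plan is to unwind both filtrations and reduce to the behaviour of a single Getzler-order-one operator on a section of known scaling order. Let $\sigma$ have scaling order $p$ or more, and let $D$ have Getzler order $q$ or less. We must show $\mathrm{scaling\ order}(D\sigma) \ge p - q$, which by definition means $\CliffordOrder(E D \sigma) \le r - (p-q)$ for every $E$ of Getzler order $r$ or less. But $E D$ is then a differential operator of Getzler order $r + q$ or less, since the Getzler filtration is multiplicative (the defining local expressions $f\cdot D_1\cdots D_p$ compose in the obvious way — this is immediate from the definition and should be stated as a one-line observation, perhaps as a separate tiny lemma or inline). Hence $\CliffordOrder\bigl((ED)\sigma\bigr) \le (r+q) - p$ by the hypothesis that $\sigma$ has scaling order $p$ or more. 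Rearranging, $\CliffordOrder(E(D\sigma)) \le r - (p - q)$, which is exactly what is needed. If one only assumes the scaling-order condition for $\sigma$ near $m$, the same argument works in a neighbourhood of $(m,m)$.

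First I would note that the statement is genuinely formal: it is just the assertion that the scaling filtration of $S\boxtimes S^*$ is compatible with the Getzler filtration of the operator algebra acting through the first factor, and this follows purely from the multiplicativity of the Getzler filtration together with the definition of scaling order. So the only real content to check is that Getzler order is sub-additive under composition, i.e. $\GetzlerOrder(ED) \le \GetzlerOrder(E) + \GetzlerOrder(D)$. This is clear: if $E$ is locally a sum of terms $f\cdot E_1\cdots E_r$ with each $E_i$ a covariant derivative, a Clifford multiplication, or the identity, and similarly $D = \sum g\cdot D_1\cdots D_s$, then $ED = \sum f\cdot E_1\cdots E_r\cdot(g\cdot D_1\cdots D_s)$, and moving the smooth function $g$ to the left past the $E_i$ introduces only additional lower-order terms of the same form (each commutator $[E_i, g]$ is again a smooth function, i.e. Getzler order zero, times the remaining factors), so every resulting term is a smooth function times a product of at most $r + s$ generators.

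I do not expect any serious obstacle here; the proof is a few lines of bookkeeping. The one point requiring a moment's care is the multiplicativity of the Getzler filtration, specifically the commutation of a smooth function past a string of covariant derivatives and Clifford multiplications — but since $[\nabla_X, f] = (Xf)$ and $[c(X), f] = 0$, each such move lowers the number of genuine operator factors by one while inserting a function, so the total Getzler order never increases. The rest is a direct substitution into Definition~\ref{def-scaling-order}. The local version near a point $m$ follows verbatim, working entirely within a neighbourhood of $(m,m)$ in $M\times M$.
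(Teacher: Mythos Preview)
Your proof is correct and follows exactly the argument the paper has in mind: the lemma is marked \qed\ with no proof given, because it is immediate from Definition~\ref{def-scaling-order} together with the sub-additivity of Getzler order under composition. Your expanded verification of that sub-additivity (via $[\nabla_X,f]=Xf$ and $[c(X),f]=0$) is correct and is the one point worth a moment's thought, but otherwise this is precisely the intended one-line bookkeeping.
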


A deeper result concerning scaling order is the following fact, whose proof we shall give in an appendix; see Section~\ref{sec-taylor}.

\begin{proposition}
\label{prop-extension-from-diagonal}
Let $m\in M$ and let $d \ge 0$. Every smooth section of the bundle $\clifford (TM)$ over $M$ that has Clifford order $d$ or less near $m$  is the restriction to the diagonal in $M{\times}M$ of a smooth section of $S\boxtimes S^*$ of scaling order $-d$ or more near $m$.
\end{proposition}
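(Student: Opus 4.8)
The plan is to write down an explicit extension of $\tau$ built from parallel transport, and then check the scaling-order estimate one test operator at a time. Fix $m$ and let $\tau$ be a section of $\Cliff(TM)\cong\End(S)$ of Clifford order $d$ or less near $m$. On a tubular neighbourhood of the diagonal in $M{\times}M$, let $P_{x\leftarrow y}\colon S_y\to S_x$ be parallel transport for the spin connection along the short geodesic from $y$ to $x$, and set
\[
\sigma(x,y)=\tau(x)\circ P_{x\leftarrow y}\ \in\ \operatorname{Hom}(S_y,S_x)=(S\boxtimes S^*)_{(x,y)} .
\]
Multiplying by a cutoff function equal to $1$ near the diagonal turns this into a global smooth section of $S\boxtimes S^*$ without changing its restriction to the diagonal (which is $\tau$) or its scaling order near $m$; so everything reduces to showing that $\sigma$ has scaling order $-d$ or more near $m$.

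To do that I would first cut down the family of test operators. Using \eqref{eq-compatible-connection-on-S} in the form $\nabla_X\circ c(Y)=c(Y)\circ\nabla_X+c(\nabla_X^{\mathrm{LC}}Y)$ to push Clifford multiplications to the left of covariant derivatives, an evident induction shows that every operator of Getzler order $\le q$ is, near any point, a finite sum of operators $f\cdot c(Y_1)\cdots c(Y_l)\,\nabla_{X_1}\cdots\nabla_{X_k}$ with $l+k\le q$. A scalar factor does not change the Clifford order of a section's value at a diagonal point, and each factor $c(Y_i(x))\in\Cliff_1(T_xM)$ raises it by at most $1$, so it suffices to prove
\[
\CliffordOrder\bigl((\nabla^{(1)}_{X_1}\cdots\nabla^{(1)}_{X_k}\sigma)(x,x)\bigr)\ \le\ d+k
\]
for all vector fields $X_i$ and all $x$ near $m$, where $\nabla^{(1)}$ is covariant differentiation in the first variable. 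Expanding by the Leibniz rule — the operators $\nabla^{(1)}_{X_i}$ being applied in a fixed order, so that no curvature terms appear — writes the left side as a sum over partitions $S\sqcup S^c=\{1,\dots,k\}$ of terms $(\nabla_{X_S}\tau)(x)\circ(\nabla^{(1)}_{X_{S^c}}P_{\cdot\leftarrow y})(x)$. Since $\Cliff_d(TM)$ is a parallel subbundle of $\Cliff(TM)$ (the connection being induced from the Levi-Civita connection, which acts by orthogonal transformations and hence preserves the Clifford grading), each $\nabla_{X_S}\tau$ again has Clifford order $\le d$ near $m$, and composition of endomorphisms respects the Clifford filtration; so the estimate reduces to
\[
\CliffordOrder\bigl((\nabla^{(1)}_{X_{j_1}}\cdots\nabla^{(1)}_{X_{j_b}}P_{\cdot\leftarrow y})(y)\bigr)\ \le\ b
\]
for $y$ near $m$.

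This last bound is the heart of the matter and the step I expect to take real work. I would prove it in a synchronous (radial parallel) orthonormal frame of $S$ centred at $y$: in such a frame the section $x\mapsto P_{x\leftarrow y}$ is the \emph{constant} section equal to the identity, while $\nabla^{(1)}_X=\partial_X+\theta(X)$ with connection one-form $\theta$ taking values in $q(\wedge^2 T_yM)\subseteq\Cliff_2(T_yM)$ and vanishing at $y$ (radial gauge). Expanding $(\partial+\theta)\cdots(\partial+\theta)$ applied to the constant section by the product rule and evaluating at $y$, every surviving term is a product of $r\ge1$ factors, each a partial derivative of $\theta$ that is differentiated at least once (an undifferentiated $\theta$-factor vanishes at $y$); so the $b$ operations split into $r$ insertions of a $\theta$ and at least $r$ compensating differentiations, forcing $2r\le b$. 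Each factor lies in $\Cliff_2(T_yM)$, so the product lies in $\Cliff_{2r}(T_yM)\subseteq\Cliff_b(T_yM)$, which is the claim. For vector fields $X_i$ that are not coordinate fields, the extra Leibniz terms merely move derivatives onto scalar frame coefficients and do not disturb this count.

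Two points of care. First, all the estimates above must be run with $x$ (resp.\ $y$) ranging over a neighbourhood of $m$, since ``scaling order near $m$'' is a neighbourhood condition; this is harmless because the synchronous-frame argument for $P_{x\leftarrow y}$ works verbatim with $y$ an arbitrary point. Second, the counting really uses that the spin connection has curvature of Clifford degree $2$ — that is exactly what lets a single covariant derivative cost at most one unit of scaling order while contributing up to two units of Clifford order, and it is the geometric mechanism underlying Getzler's rescaling.
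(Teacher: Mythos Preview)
Your argument is correct, and it follows a route that is related to but genuinely different from the paper's.  Both constructions extend $\tau$ via parallel transport, but in opposite ways: the paper takes the \emph{synchronous} extension $\tilde\sigma(x,y)=P_{x\leftarrow y}\circ\tau(y)$ (so that $\nabla_E\tilde\sigma=0$), whereas you take $\sigma(x,y)=\tau(x)\circ P_{x\leftarrow y}$.  The paper then introduces an auxiliary ``Taylor order'' and shows, via the Euler-vector-field identity $\nabla_E\nabla_X\sigma+\nabla_X\sigma=K(E,X)\sigma$ together with a Taylor-series comparison (Lemma~\ref{lem-taylor}), that each $\nabla_X$ lowers Taylor order by at most one; this yields the stronger Proposition~\ref{prop-scaling-order-and-taylor}, of which the statement at hand is the special case of a synchronous section.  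You instead reduce directly to iterated covariant derivatives of the parallel-transport kernel and do a bare-hands count in radial gauge: $r$ insertions of the $\Cliff_2$-valued form $\theta$ require at least $r$ compensating $\partial$'s to survive at the centre, forcing $2r\le b$.  Your approach is more elementary and self-contained; the paper's approach yields the extra information that the Taylor coefficients of $\nabla_X\sigma$ are explicit curvature terms, which is useful elsewhere in the paper.  One small point worth making explicit in your write-up: the frame of $S$ should be the radial-parallel lift of a synchronous \emph{orthonormal} frame of $TM$, so that the connection form $\theta$ genuinely takes values in $q(\wedge^2 TM)\subseteq\Cliff_2$ and not merely in $\End(S)$.
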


\subsection{The Rescaled Spinor Module} 
\label{subsec-rescaled-module}
 
In this subsection we shall define a module $S(\mathbb{T}M)$ over the coordinate algebra $A(\mathbb{T}M)$    using   the scaling filtration from the previous subsection and the Rees construction. As we shall soon see, it  may be viewed as  the module of ``regular'' sections of a   bundle $\mathbb{S}$ over the tangent groupoid, just as $A(\mathbb{T}M)$  may be viewed as  the  algebra of ``regular'' functions on the tangent groupoid.  Here we shall compute the fibers of the module  $S(\mathbb{T}M)$, which will be the fibers of the bundle $\mathbb{S}$.

 \begin{definition}
Denote by  $S(\mathbb{T}M)$   the complex  vector space of   Laurent polynomials
\[
\sum_{p \in \mathbb{Z}} \sigma _p  t^{-p}
\]
where each $\sigma_p$ is a smooth section of $S\boxtimes S^*$ of scaling order at least $p$.   It follows from  Lemma~\ref{lem-S-module-over-A} that    $S(\mathbb{T}M)$   is a module over $A(\mathbb{T}M)$ by ordinary multiplication of   Laurent polynomials. For each point $\gamma \in \mathbb{T}M$ let   $I_\gamma\subseteq A(\mathbb{T}M)$  be  the corresponding vanishing ideal. The \emph{fiber} of $S(\mathbb{T}M)$ over $\gamma$ is 
\begin{equation*}
S(\mathbb{T}M)\vert _\gamma  = S(\mathbb{T}M)  \big/ I_\gamma \cdot S(\mathbb{T}M) 
\end{equation*}

\end{definition}

The most interesting fibers are those for which   the morphism $\gamma\in \mathbb{T}M$ has the form $\gamma = (X_m,0)$, where $X_m$ is a tangent vector on $M$, and most of this subsection will be devoted to studying them.  

\begin{definition}  We shall denote by $S_0(\mathbb{T}M)$ the vector space quotient 
\[
S_0(\mathbb{T}M) = S(\mathbb{T}M)  \big/ t \cdot S(\mathbb{T}M) .
\]
Note that the $A(\mathbb{T}M)$-module structure on $S(\mathbb{T}M)$ descends to an  $A_0(\mathbb{T}M)$-module structure on $S_0(\mathbb{T}M)$. 
\end{definition}

The  quotient space   $S_0(\mathbb{T}M)$ is a graded vector space with nonzero components in integer degrees $-\dim(M)$ and up.  Indeed it is  the associated graded space 
for the decreasing filtration of the smooth sections of $S {\boxtimes}S^*$ by scaling order (to be clear, we place in degree $p$ the images of the elements $\sigma_pt^{-p}$, or in other words the sections of scaling order at least $p$, modulo the sections of scaling order $p{+}1$ or more). 

There is an obvious isomorphism
 \[
S(\mathbb{T}M)\vert _\gamma \cong  S_0(\mathbb{T}M) \big / I_{X_m}\cdot S_0(\mathbb{T}M),
\]
where $I_{X_m}$ is the     kernel of $\varepsilon _{X_m}$ in $A_0(\mathbb{T}M)$.     We shall use this to compute $S(\mathbb{T}M)\vert _\gamma$.

  \begin{definition}
  \label{def-eval-map-for-S-zero}
 Let $m\in M$.  We shall denote by 
  \[
\varepsilon_m \colon S _0 (\mathbb{T}M) \longrightarrow \wedge^* T_mM
\]
the  \emph{evaluation map} at $m\in M$  defined by the formula
\[
\varepsilon_m \colon \sum \sigma_p t^{-p}
\longmapsto \sum [\sigma_{-d}(m,m)]_{d} , 
\]
where  $[\,\underbar{\phantom{x}}\,]_{d}$ denotes the image in the quotient 
$
 \clifford_d  (T_mM) \big / \clifford _{d-1}(T_mM)  
$ 
of an element in $\Cliff_d (T_mM)$, and we identify the quotient with $\wedge^d T_mM$ via the quantization map.
 \end{definition}

  \begin{lemma}
The evaluation map has the property that 
\[
\varepsilon_m (f \sigma) = \varepsilon_m(f) \varepsilon_m (\sigma)
\]
for all $f \in A_0(\mathbb{T}M)$ and all $\sigma\in S_0(\mathbb{T}M)$. \qed
\end{lemma}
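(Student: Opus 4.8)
The plan is to verify the identity directly, using Laurent-polynomial representatives together with the observation that the character $\varepsilon_m$ on $A_0(\mathbb{T}M)$ only detects the $t$-degree-zero part of $f$, namely its restriction to the diagonal. First I would represent the given element of $A_0(\mathbb{T}M)$ by a Laurent polynomial $f=\sum_{p\ge 0}f_p t^{-p}$ in $A(\mathbb{T}M)$, with each $f_p$ vanishing to order $p$ or more on the diagonal; this is legitimate since any term $f_p t^{-p}$ with $p<0$ lies in the ideal $t\cdot A(\mathbb{T}M)$ and hence represents zero in $A_0(\mathbb{T}M)$. Likewise represent the element of $S_0(\mathbb{T}M)$ by $\sigma=\sum_q \sigma_q t^{-q}$ in $S(\mathbb{T}M)$, each $\sigma_q$ of scaling order $q$ or more. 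Then $f\sigma=\sum_r\tau_r t^{-r}$ with $\tau_r=\sum_{p+q=r}f_p\sigma_q$, and Lemma~\ref{lem-S-module-over-A} guarantees that each $\tau_r$ has scaling order $r$ or more, so that the defining formula for $\varepsilon_m$ applies to $f\sigma$ in this form.

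The main step is then to compute the value of $\tau_{-d}$ at the diagonal point $(m,m)$ modulo $\clifford_{d-1}(T_mM)$. Since the $f_p$ are scalar-valued we have $\tau_{-d}(m,m)=\sum_{p\ge 0}f_p(m,m)\,\sigma_{-d-p}(m,m)$, and for every $p\ge 1$ the number $f_p(m,m)$ vanishes because $f_p$ vanishes on the diagonal. Hence only the term $p=0$ survives, and
\[
\bigl[\tau_{-d}(m,m)\bigr]_d = \bigl[f_0(m,m)\,\sigma_{-d}(m,m)\bigr]_d = f_0(m,m)\cdot\bigl[\sigma_{-d}(m,m)\bigr]_d ,
\]
the last equality because the quantization identification $\clifford_d(T_mM)/\clifford_{d-1}(T_mM)\cong\wedge^d T_mM$ is linear and multiplication by a scalar is central in the Clifford algebra. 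Summing over $d$, and recalling that $\varepsilon_m(f)=f_0(m,m)$ (the quantity written $f_0(m)$ in the character formula for $A_0(\mathbb{T}M)$), this gives $\varepsilon_m(f\sigma)=f_0(m,m)\sum_d[\sigma_{-d}(m,m)]_d=\varepsilon_m(f)\,\varepsilon_m(\sigma)$.

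I do not anticipate a real obstacle here; the statement is in essence a bookkeeping exercise in the Rees formalism. The only two points meriting care are that the final answer must be checked to be independent of the chosen representatives --- which follows by the same mechanism that makes $\varepsilon_m$ well defined on $S_0(\mathbb{T}M)$, namely that adding an element of $t\cdot S(\mathbb{T}M)$ alters $\sigma_{-d}$ only by a section of scaling order $-(d-1)$ or more, whose diagonal value already lies in $\clifford_{d-1}(T_mM)$ --- and that Lemma~\ref{lem-S-module-over-A} is exactly what is needed to keep every coefficient $\tau_r$ within the correct scaling-order subspace, so that $\varepsilon_m$ can legitimately be applied to $f\sigma$.
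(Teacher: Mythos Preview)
Your proposal is correct and is exactly the direct verification from the definitions that the paper leaves implicit (the lemma is stated with only a \qed\ in the paper). Your careful handling of representatives, the use of Lemma~\ref{lem-S-module-over-A}, and the observation that only the $p=0$ term of $f$ survives at the diagonal are precisely the points one must check.
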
 

\begin{definition}
  \label{def-nabla-map-for-S-zero}
Let  $X$ be a vector field on $M$. Denote by 
\[
\boldsymbol{\nabla_X}\colon S _0 (\mathbb{T}M) \longrightarrow S _0 (\mathbb{T}M) 
\]
the linear operator determined by the formula
\[
\boldsymbol{\nabla_X}\colon \sum \sigma_p t^{-p} \longmapsto 
\sum \nabla_X \sigma_p t^{-(p-1)} .
\]
\end{definition}

\begin{lemma}
\label{lem-derivation-property-for-S}
The operator $\boldsymbol{\nabla_X}$ is  compatible with the derivation $\boldsymbol{X}$ of the coordinate algebra $A_0(\mathbb{T}M)$ in the sense that 
\[
\boldsymbol{\nabla_X} (f\sigma) = \boldsymbol{X}(f ) \sigma + f \boldsymbol{\nabla_X} (\sigma)
\]
for every $f\in A_0(\mathbb{T}M)$ and every $\sigma \in S_0(\mathbb{T}M)$. \qed
\end{lemma}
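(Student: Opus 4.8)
The plan is to reduce the identity to the ordinary Leibniz rule for the connection $\nabla$, carried out termwise on Laurent polynomials, with the only extra ingredient being the bookkeeping of exponents in the Rees construction.

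First I would choose representatives: write $f\in A_0(\mathbb{T}M)$ as the class of a Laurent polynomial $\sum_p f_p t^{-p}$ with each $f_p$ vanishing to order $p$ or more on the diagonal of $M{\times}M$, and $\sigma\in S_0(\mathbb{T}M)$ as $\sum_q\sigma_q t^{-q}$ with each $\sigma_q$ of scaling order $q$ or more, so that $f\sigma$ is represented by $\sum_r\bigl(\sum_{p+q=r} f_p\sigma_q\bigr)t^{-r}$.

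Next I would apply the defining formula of Definition~\ref{def-nabla-map-for-S-zero} to this representative and expand each $\nabla_X(f_p\sigma_q)$ using the ordinary Leibniz rule for a connection, namely $\nabla_X(h\tau) = X(h)\,\tau + h\,\nabla_X\tau$ for a smooth function $h$ on $M{\times}M$ and a smooth section $\tau$ of $S\boxtimes S^*$, with $X$ lifted to the first factor of $M{\times}M$ and the connection taken in that factor. This yields
\[
\boldsymbol{\nabla_X}(f\sigma) = \sum_r\sum_{p+q=r}\Bigl(X(f_p)\,\sigma_q + f_p\,\nabla_X\sigma_q\Bigr)\,t^{-(r-1)} .
\]
I would then recognize the first double sum as $\boldsymbol{X}(f)\cdot\sigma$, using that $\boldsymbol{X}$ is induced by the derivation \eqref{eq-X-derivation} which sends $\sum f_p t^{-p}$ to $\sum X(f_p)t^{-(p-1)}$, and the second double sum as $f\cdot\boldsymbol{\nabla_X}(\sigma)$, again by Definition~\ref{def-nabla-map-for-S-zero}; in both cases the matching is just tracking the single exponent shift $t^{-r}\mapsto t^{-(r-1)}$ produced by one application of $X$, respectively $\nabla_X$. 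Adding the two contributions gives the claimed identity on the chosen representatives, hence on $S_0(\mathbb{T}M)$.

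There is no serious obstacle here: the whole content is the connection Leibniz rule together with the degree bookkeeping of the Rees construction. The two points that deserve a word are (a) that the formula defining $\boldsymbol{\nabla_X}$ is legitimate, which rests on $\nabla_X$ having Getzler order at most one so that it lowers scaling order by at most one, a fact already recorded; and (b) that one may safely compute with representatives, since $\boldsymbol{\nabla_X}$, $\boldsymbol{X}$, and the $A_0(\mathbb{T}M)$-module structure are already known to be well defined on the quotients. Alternatively, one can run the same termwise computation directly on $S(\mathbb{T}M)$, where the Laurent-polynomial representation is literal and unique, and pass to $S_0(\mathbb{T}M)=S(\mathbb{T}M)/t\cdot S(\mathbb{T}M)$ only at the end.
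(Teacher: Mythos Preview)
Your argument is correct and is precisely the intended verification: the paper marks this lemma with \qed and gives no proof, so the expected reasoning is exactly the termwise application of the ordinary Leibniz rule $\nabla_X(h\tau)=X(h)\tau+h\nabla_X\tau$ together with the Rees exponent bookkeeping that you carry out.
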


The operator $\boldsymbol{\nabla_X}$ has grading degree minus one, and is therefore locally nilpotent. So we  can form the exponential 
\[
 \exp(\boldsymbol{\nabla_X}) \colon S _0 (\mathbb{T}M) \longrightarrow S _0 (\mathbb{T}M) 
\]
using the power series. Inspired by the discussion in Subsection~\ref{subsec-vector-fields-def-space}, let us  now make the  following definition:

\begin{definition}
Let $X$ be a vector field on $M$ and let $m\in M$.  Denote by 
\[
\varepsilon_{X_m} \colon S _0 (\mathbb{T}M) \longrightarrow \wedge^* T_mM
\]
the map defined by the commuting diagram
\[
\xymatrix{
S_0(\mathbb{T}M) \ar[r]^{\varepsilon_{X_m}} \ar[d]_{\exp(\boldsymbol{\nabla_X})}& \wedge^* T_mM \ar@{=}[d]\\
S_0(\mathbb{T}M) \ar[r]_{\varepsilon_m} & \wedge^* T_mM .
}
\]
\end{definition}

\begin{lemma}
\label{lem-compatibility-with-module-action}
The morphism  $  \varepsilon_{X_m}$ depends only on   the tangent vector  $X_m$, and not on  the values  of the vector field $X$ at other points in $M$. Moreover $\varepsilon _{X_m}(f \sigma) = \varepsilon_{X_m}(f) \varepsilon_{X_m} (\sigma)$.
\end{lemma}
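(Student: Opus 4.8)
The plan is to establish the two claims separately. The multiplicativity $\varepsilon_{X_m}(f\sigma)=\varepsilon_{X_m}(f)\,\varepsilon_{X_m}(\sigma)$ is purely formal. Lemma~\ref{lem-derivation-property-for-S} asserts exactly that $\boldsymbol{\nabla_X}$ is a derivation of the $A_0(\mathbb{T}M)$-module $S_0(\mathbb{T}M)$ sitting over the derivation $\boldsymbol{X}$ of the algebra $A_0(\mathbb{T}M)$. From this one gets by induction the Leibniz identity
\[
\boldsymbol{\nabla_X}^{\,n}(f\sigma)=\sum_{k=0}^{n}\binom{n}{k}\,\boldsymbol{X}^{k}(f)\,\boldsymbol{\nabla_X}^{\,n-k}(\sigma),
\]
and dividing by $n!$ and summing over $n$ (all sums are finite since $\boldsymbol{X}$ and $\boldsymbol{\nabla_X}$ are locally nilpotent) gives $\exp(\boldsymbol{\nabla_X})(f\sigma)=\exp(\boldsymbol{X})(f)\cdot\exp(\boldsymbol{\nabla_X})(\sigma)$. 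Applying $\varepsilon_m$, which is multiplicative over the character $\varepsilon_m$ of $A_0(\mathbb{T}M)$ (the lemma following Definition~\ref{def-eval-map-for-S-zero}), and recalling that $\varepsilon_{X_m}=\varepsilon_m\circ\exp(\boldsymbol{X})$ on $A_0(\mathbb{T}M)$ (Proposition~\ref{prop-exponential-formula}) while $\varepsilon_{X_m}=\varepsilon_m\circ\exp(\boldsymbol{\nabla_X})$ on $S_0(\mathbb{T}M)$ by definition, one obtains $\varepsilon_{X_m}(f\sigma)=\varepsilon_{X_m}(f)\,\varepsilon_{X_m}(\sigma)$.

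For the dependence on $X$ I would first make the definition explicit. Since $\boldsymbol{\nabla_X}(\sigma_p t^{-p})=(\nabla_X\sigma_p)\,t^{-(p-1)}$, expanding the exponential and applying $\varepsilon_m$ term by term gives
\[
\varepsilon_{X_m}\Bigl(\sum_{p}\sigma_p t^{-p}\Bigr)=\sum_{p}\ \sum_{j\ge 0}\ \frac{1}{j!}\,\bigl[(\nabla_X^{\,j}\sigma_p)(m,m)\bigr]_{\,j-p},
\]
a finite sum, where $[\,\cdot\,]_{d}$ denotes the image in $\clifford_{d}(T_mM)/\clifford_{d-1}(T_mM)\cong\wedge^{d}T_mM$. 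This is well posed: $\sigma_p$ has scaling order at least $p$ and each $\nabla_X$ has Getzler order at most $1$, so (by the lemma preceding Proposition~\ref{prop-extension-from-diagonal}) $\nabla_X^{\,j}\sigma_p$ has scaling order at least $p-j$, hence Clifford order at most $j-p$ along the diagonal. It therefore suffices to prove that, for each fixed $p$ and $j$, the class $[(\nabla_X^{\,j}\sigma_p)(m,m)]_{j-p}$ is unchanged when $X$ is replaced by any vector field $X'$ with $X'_m=X_m$.

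To see this, set $Z=X-X'$, a vector field vanishing at $m$; in local coordinates near $m$ write $Z=\sum_i Z^i\partial_i$ with $Z^i(m)=0$. The telescoping identity $\nabla_X^{\,j}-\nabla_{X'}^{\,j}=\sum_{a+b=j-1}\nabla_{X'}^{\,a}\,\nabla_Z\,\nabla_X^{\,b}$ holds, and $\nabla_Z=\sum_i Z^i\nabla_{\partial_i}$. Applying the ordinary Leibniz rule to push each coefficient $Z^i$ to the left past $\nabla_{X'}^{\,a}$ and then evaluating at $(m,m)$, one finds: the unique summand in which no derivative falls on $Z^i$ carries the factor $Z^i(m)=0$ and so vanishes; every remaining summand is a scalar multiple of the value at $(m,m)$ of $\nabla$ applied at most $j-1$ times to $\sigma_p$, which has Clifford order at most $(j-1)-p<j-p$. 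Hence $(\nabla_X^{\,j}\sigma_p-\nabla_{X'}^{\,j}\sigma_p)(m,m)$ has Clifford order at most $j-p-1$, so its class in $\clifford_{j-p}(T_mM)/\clifford_{j-p-1}(T_mM)$ vanishes. This gives the required independence, and combined with the first paragraph completes the proof.

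The step I expect to be the main obstacle is this last Clifford-order count: one has to keep precise track, in the Leibniz expansion of $\nabla_{X'}^{\,a}\bigl(Z^i\,\nabla_{\partial_i}\nabla_X^{\,b}\sigma_p\bigr)$, of how many covariant derivatives end up acting on $\sigma_p$ as opposed to on the coefficient $Z^i$, and to combine this with the two order conventions in play (a section of scaling order $\ge s$ has Clifford order $\le -s$ on the diagonal; applying a Getzler-order-$\le q$ operator lowers scaling order by at most $q$). The crucial point that makes the estimate work is that the only term threatening to contribute in Clifford degree exactly $j-p$ — the one in which no derivative falls on $Z^i$ — is precisely the term with coefficient $Z^i(m)$, which is zero.
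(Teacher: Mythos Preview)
Your proof is correct. The paper's own proof is a single sentence (``This follows from Lemma~\ref{lem-derivation-property-for-S} and the definitions''), so you have supplied exactly the details the paper leaves to the reader: the multiplicativity argument is precisely the exponentiation of Lemma~\ref{lem-derivation-property-for-S} that the authors have in mind, and your Clifford-order count for independence is the natural way to unpack ``the definitions'' once one writes out $\varepsilon_{X_m}$ explicitly as $\sum_{p,j}\tfrac{1}{j!}\bigl[(\nabla_X^{\,j}\sigma_p)(m,m)\bigr]_{j-p}$.
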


 \begin{proof} This follows from Lemma~\ref{lem-derivation-property-for-S} and the definitions.
 \end{proof}

 \begin{proposition}
\label{prop-zero-fibers}
Let $m\in M$ and let $X_m\in T_mM$.  The morphism 
\begin{equation*}
\label{eq-eval-at-tangent-vector}
\varepsilon _{X_m} \colon S(\mathbb{T}M)\longrightarrow \wedge^* T_m M
\end{equation*}
induces an isomorphism 
\[
S(\mathbb{T}M)\vert _{(X_m,0)}\stackrel{\cong}\longrightarrow  \wedge^* T_mM.
\]
\end{proposition}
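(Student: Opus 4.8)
The plan is to reduce the claim to a statement about the structure of the associated graded module $S_0(\mathbb{T}M)$ and then to exhibit an explicit inverse to the evaluation map. Concretely, I would first observe that by the isomorphism $S(\mathbb{T}M)\vert_{(X_m,0)} \cong S_0(\mathbb{T}M)/I_{X_m}\cdot S_0(\mathbb{T}M)$ recorded just before the statement, and by Lemma~\ref{lem-compatibility-with-module-action}, the map $\varepsilon_{X_m}$ does factor through this quotient, so it suffices to show that $\varepsilon_{X_m}\colon S_0(\mathbb{T}M)/I_{X_m}\cdot S_0(\mathbb{T}M) \to \wedge^* T_mM$ is an isomorphism. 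Since $\exp(\boldsymbol{\nabla_X})$ is an automorphism of $S_0(\mathbb{T}M)$ intertwining $\varepsilon_{X_m}$ with $\varepsilon_m$ (and, by Lemma~\ref{lem-derivation-property-for-S}, carrying the ideal $I_{X_m}$ to $I_m = \ker\varepsilon_m$), the problem is equivalent to showing that the ``base point'' evaluation map $\varepsilon_m\colon S_0(\mathbb{T}M)/I_m\cdot S_0(\mathbb{T}M)\to \wedge^* T_mM$ is an isomorphism.

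Next I would analyze $\varepsilon_m$ directly. Surjectivity is the essentially easy half: given $\omega\in \wedge^d T_mM$, lift it under the quantization map to an element of $\Cliff_d(T_mM)$, extend that to a smooth section $s$ of $\Cliff(TM)$ of Clifford order $d$ or less near $m$ — which by Proposition~\ref{prop-extension-from-diagonal} is the diagonal restriction of a smooth section $\sigma$ of $S\boxtimes S^*$ of scaling order $-d$ or more near $m$ — and then $\sigma t^{d}\in S_0(\mathbb{T}M)$ maps to $\omega$ under $\varepsilon_m$ up to lower-degree terms, which one clears by downward induction on $d$ (starting from $d=\dim M$, where there are no higher terms). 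For injectivity, I would show that the graded vector space $S_0(\mathbb{T}M)$ is, as an $A_0(\mathbb{T}M)$-module, ``locally free of rank $2^{\dim M}$ along $M$,'' or at least that $\dim_{\C} S_0(\mathbb{T}M)/I_m\cdot S_0(\mathbb{T}M) \le \dim_{\C}\wedge^* T_mM = 2^{\dim M}$; combined with surjectivity this forces an isomorphism. The cleanest route is degree by degree: in grading degree $d$, $S_0(\mathbb{T}M)_d$ consists of sections of scaling order $\ge d$ modulo those of scaling order $\ge d+1$, and one wants to see that modulo $I_m$ this is exactly $\wedge^d T_mM$, i.e. that a section of scaling order exactly $d$ whose diagonal value at $m$ has Clifford order $< d$ lies in $I_m\cdot S_0(\mathbb{T}M) + (\text{scaling order} \ge d{+}1)$. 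This is where the module structure and Lemma~\ref{lem-S-module-over-A} are used: multiplying generators of $\wedge^d T_mM$-type by functions vanishing on the diagonal shifts scaling degree, and the Taylor-expansion argument underlying Proposition~\ref{prop-extension-from-diagonal} controls the ambiguity.

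The main obstacle I anticipate is precisely the injectivity/rank bound: one must rule out that $S_0(\mathbb{T}M)$ has ``too many'' sections in a given scaling degree, i.e. that the associated graded of the scaling filtration is no bigger than $\wedge^* T_mM$ fiberwise. This is the genuine content of the proposition (surjectivity being formal given the earlier appendix result), and it amounts to a careful local computation with Taylor expansions along the diagonal: one writes a general section as a polynomial in diagonal-vanishing functions with coefficients that are sections of $\Cliff(TM)$, tracks how the Getzler filtration interacts with the order-of-vanishing filtration, and checks that the leading term in the associated graded is captured entirely by the symbol in $\wedge^* T_mM$. I would organize this as a lemma computing $S_0(\mathbb{T}M)$ explicitly as $\bigoplus_d$ (sections of $\wedge^d TM$ near $m$)$\otimes$(polynomial functions on $T_mM$ of appropriate degree), deferring the messy Taylor estimate to the appendix in Section~\ref{sec-taylor} alongside Proposition~\ref{prop-extension-from-diagonal}, and then the proposition follows by specializing that description at the zero normal vector and transporting along $\exp(\boldsymbol{\nabla_X})$.
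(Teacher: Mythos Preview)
Your plan is correct and shares its core technical content with the paper's proof, but the organization differs in two notable ways.

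First, your reduction from $\varepsilon_{X_m}$ to $\varepsilon_m$ via the automorphism $\exp(\boldsymbol{\nabla_X})$ is a genuine simplification that the paper does \emph{not} make: the paper works with $\varepsilon_{X_m}$ directly throughout. Your reduction is valid (the derivation property in Lemma~\ref{lem-derivation-property-for-S} gives $\exp(\boldsymbol{\nabla_X})(f\sigma) = (\exp(\boldsymbol{X})f)\cdot(\exp(\boldsymbol{\nabla_X})\sigma)$, so $I_{X_m}\cdot S_0$ is carried to $I_m\cdot S_0$), and it pays off immediately for surjectivity: for $\varepsilon_m$ a single term $\sigma\,t^{d}$ lands \emph{exactly} in $\wedge^{d}T_mM$ with no error terms at all, so your ``lower-degree terms'' and the downward induction are unnecessary. (For $\varepsilon_{X_m}$ the paper does get error terms, but they are \emph{higher}-degree, not lower; see the displayed formula for the image of $e_I t^{\ell(I)}$.)

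Second, for injectivity the paper takes a more concrete route than your rank-bound sketch. It fixes a local frame $\{e_I\}$ for $S\boxtimes S^*$ built from Proposition~\ref{prop-extension-from-diagonal}, and then proves the key Lemma~\ref{lem-local-frame-expansion}: a section $\sigma = \sum_I h_I e_I$ has scaling order $\ge p$ near $m$ if and only if each coefficient $h_I$ vanishes to order $\ge p+\ell(I)$ on the diagonal. This immediately exhibits any element of $S(\mathbb{T}M)$ locally as $\sum_I a_I\cdot(e_I t^{\ell(I)})$ with $a_I\in A(\mathbb{T}M)$, and since the images of the $e_I t^{\ell(I)}$ under $\varepsilon_{X_m}$ are linearly independent, vanishing of the image forces each $a_I$ to lie in $I_{X_m}$. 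Your proposed ``lemma computing $S_0(\mathbb{T}M)$ explicitly'' is precisely this local-frame statement, so the substance is the same; the paper just packages it as freeness over $A(\mathbb{T}M)$ rather than as a dimension count over $\C$, which avoids the degree-by-degree bookkeeping you anticipate.
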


The proof will use the following local form for    sections of $S{\boxtimes}S^*$ of scaling order $p$.  Let $n=\dim(M)$, choose  a local orthonormal frame $\{\, e_1,\dots, e_n\,\}$ for $TM$ near $m\in M$, and 
for each index  
\[
I = ( i_1 < \cdots < i_d)
\]
of length $\ell(I) =d$, form the local section
\[
e_I = e_{i_1}\cdot e_{i_2}\cdot  \ldots \cdot e_{i_d}
\]
of the bundle $\clifford_{\ell(I)} (TM)$  over $M$.  View this as a local section of the restriction of $S{\boxtimes} S^*$   to the diagonal,  and use Theorem~\ref{prop-extension-from-diagonal} to extend it to a section over $M{\times} M$ with  scaling order $-\ell(I)$ near $m$.  We shall use the same notation $e_I$ for the extension.  
The smooth sections $e_I$ constitute a local frame for $S{\boxtimes} S^*$ near $(m,m)\in M{\times}M$.   So any smooth section $\sigma$ may be expanded in the form 
\begin{equation}
\label{eq-local-frame-expansion}
\sigma = \sum _I h_I \cdot e_I
\end{equation}
near $(m,m)$, where the $h_I$ are smooth, complex-valued  functions  on $M{\times}M$.  It follows from Lemma~\ref{lem-S-module-over-A} and the definition of scaling order that:

\begin{lemma}
\label{lem-local-frame-expansion}
Let $p\in \mathbb{Z}$.
The section $\sigma$ in \eqref{eq-local-frame-expansion} has scaling order $p$ near $m$ if and only if each scalar function $h_I$ vanishes on the diagonal of $M{\times}M$ near $(m,m)$ to order $p+ \ell(I)$, or more. \qed
\end{lemma}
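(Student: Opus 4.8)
The plan is to prove the two implications of the ``if and only if'' separately. The ``if'' direction is the one that invokes Lemma~\ref{lem-S-module-over-A} directly. By construction each frame section $e_I$ has scaling order $-\ell(I)$ or more near $m$ (that is precisely how $e_I$ was produced from Proposition~\ref{prop-extension-from-diagonal}), so if $h_I$ vanishes to order $p+\ell(I)$ or more on the diagonal near $(m,m)$ then Lemma~\ref{lem-S-module-over-A} shows $h_I\cdot e_I$ has scaling order $(p+\ell(I))+(-\ell(I))=p$ or more near $m$; since the sections of scaling order $p$ or more near $m$ form a linear subspace, the finite sum $\sigma=\sum_I h_I e_I$ has scaling order $p$ or more near $m$. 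That part is immediate.

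For the converse I would argue by induction, establishing for each integer $s\ge 0$ the statement $T(s)$: \emph{for every $m\in M$, every $p\in\Z$, and every smooth section $\rho$ of $S\boxtimes S^*$ of scaling order $p$ or more near $m$, every coefficient $h_I$ in the local frame expansion $\rho=\sum_I h_I e_I$ near $(m,m)$ vanishes on the diagonal near $(m,m)$ to order $\min(s,\,p+\ell(I))$ or more}; letting $s\to\infty$ then gives the ``only if'' direction. The statement $T(0)$ is vacuous. For $T(1)$ one uses only the identity operator, which has Getzler order $0$: scaling order $\ge p$ forces $\CliffordOrder(\rho\vert_{\mathrm{diag}})\le -p$ near $m$, and since the $e_I\vert_{\mathrm{diag}}$ form, pointwise, a basis of $\Cliff(T_{m'}M)$ adapted to the Clifford filtration, the membership $\rho(m',m')\in\Cliff_{-p}(T_{m'}M)$ forces $h_I(m',m')=0$ whenever $\ell(I)>-p$, that is, whenever $p+\ell(I)\ge 1$.

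The inductive step $T(s)\Rightarrow T(s{+}1)$ for $s\ge 1$ is where the real work lies. Fix $\rho$ of scaling order $\ge p$ near $m$ and a multi-index $J_0$; we may assume $p+\ell(J_0)\ge s+1$, as otherwise the claim is contained in $T(s)$. Choose local coordinates $x$ at $m$; on $M\times M$ the functions $u^i$ obtained by subtracting the second-factor coordinate from the first cut out the diagonal, and the first-factor coordinate vector fields act on functions of both variables as $\partial/\partial u^i$. Each $\nabla_{\partial_{x^i}}$ has Getzler order at most $1$, so $\nabla_{\partial_{x^i}}\rho$ has scaling order $\ge p-1$ near $m$; writing out the Leibniz rule and re-expanding $\nabla_{\partial_{x^i}}e_I=\sum_J g^{(i)}_{I,J}e_J$, the $e_{J_0}$-coefficient of $\nabla_{\partial_{x^i}}\rho$ equals $\partial_{u^i}h_{J_0}+\sum_I h_I\,g^{(i)}_{I,J_0}$. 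Applying $T(s)$ to $\nabla_{\partial_{x^i}}\rho$ shows this coefficient vanishes to order $\min(s,\,(p-1)+\ell(J_0))=s$ or more near $(m,m)$. Meanwhile $T(s)$ applied to $\rho$ controls the $h_I$ (order $\ge\min(s,p+\ell(I))$), and $T(s)$ applied to the sections $\nabla_{\partial_{x^i}}e_I$ — which themselves have scaling order $\ge-\ell(I)-1$ near $m$ — controls the $g^{(i)}_{I,J_0}$ (order $\ge\min(s,\,-\ell(I)-1+\ell(J_0))$). Because
\[
(p+\ell(I))+(-\ell(I)-1+\ell(J_0))=p+\ell(J_0)-1\ge s,
\]
a short case check on the signs of the two summands shows each product $h_I\,g^{(i)}_{I,J_0}$ vanishes to order $s$ or more near $(m,m)$; hence so does $\partial_{u^i}h_{J_0}$, for every $i$. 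Combined with the fact that $h_{J_0}$ already vanishes to order $s$ or more (by $T(s)$), an elementary Taylor-expansion argument yields that $h_{J_0}$ vanishes to order $s+1$ or more, completing the induction.

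The step I expect to be the main obstacle is the control of the cross-terms $h_I\,g^{(i)}_{I,J_0}$ arising from covariantly differentiating the frame sections $e_I$ — these are exactly the terms through which the Riemannian curvature, and with it Getzler's rescaling, enters. The feature that makes the induction close without circularity is that the inductive hypothesis applies not only to $\nabla_{\partial_{x^i}}\rho$ but also to the sections $\nabla_{\partial_{x^i}}e_I$ in their own right, and that the two orders of vanishing so obtained add up to precisely $p+\ell(J_0)-1\ge s$. One must also take care to set the induction up ``for all $p$ simultaneously'', so that $T(s)$ can be invoked at the shifted index $p-1$ and at the index $-\ell(I)-1$; beyond that, everything is routine bookkeeping with the two filtrations.
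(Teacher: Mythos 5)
Your proposal is correct, and it follows the route the paper itself indicates: the paper states this lemma without a written proof, saying only that it ``follows from Lemma~\ref{lem-S-module-over-A} and the definition of scaling order,'' which is exactly your ``if'' direction (the module lemma applied to $h_I\cdot e_I$) and your ``only if'' direction (iterated first-factor covariant derivatives, which have Getzler order one, combined with the Clifford-order constraint on the diagonal). Your induction on $s$, including the bookkeeping for the cross terms $h_I\,g^{(i)}_{I,J_0}$ coming from $\nabla_{\partial_{x^i}}e_I$, is a complete and correct implementation of that argument.
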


\begin{proof}[Proof of Proposition~\ref{prop-zero-fibers}]
Lemma~\ref{lem-compatibility-with-module-action} shows that $\varepsilon_{X_m} $ does at least induce a vector space morphism 
\[
S(\mathbb{T}M)\vert_{(X_m,0)} \longrightarrow \wedge ^* TM.
\]
In addition, if $I=(i_1 < \cdots < i_d)$, then 
\begin{equation}
\label{eq-basic-sections-under-evaluation}
S(\mathbb{T}M)\ni e_I t^{\ell(I)} \longmapsto e_{i_1}\wedge \cdots \wedge e_{i_d} + \text{higher-degree  terms}  \in \wedge^* T_mM 
\end{equation}
It follows from this that $\varepsilon_{X_m}$ is surjective.

Now suppose that  $\sum \sigma_p t^{-p} \in S(\mathbb{T}M)$ is mapped to zero by $\varepsilon_{X_m}$.   We need to prove that it lies in $I_\gamma \cdot S (\mathbb{T}M)$. In doing so, we can assume that each $\sigma_p$ is supported near $(m,m)$. Indeed, 
if $\varphi$ is any smooth function on $M{\times}M$ that is equal to $1$ near $(m,m)$, then we can write 
\[
\sum \sigma_p t^{-p} = (1-\varphi) \cdot \sum \sigma_p t^{-p} + \sum \varphi \sigma_p t^{-p} ,
\]
and the first term on the right-hand side belongs to $I_{(X_m,0)}\cdot S(\mathbb{T}M)$ and so is mapped to zero by $\varepsilon_{X_m}$.  So we can replace each $\sigma_p$ by $\varphi \sigma_p$.

Assuming then that each    $\sigma_p$ is supported near $(m,m)$, we can write 
\[
\sigma_p = \sum_I h_{p,I} e_I ,
\]
as in \eqref{eq-local-frame-expansion}.  According to Lemma~\ref{lem-local-frame-expansion}, each  $h_{p,I}$ vanishes to order  $p {+} \ell(I)$ or higher on the diagonal in $M{\times}M$. Hence  we may write 
 \begin{equation}
 \label{eq-basic-sections-span1}
 \sum_p \sigma_p t^{-p}  
 	  = 
	\sum _{I} \,  \bigl (\,  \sum _p h_{p,I} t^{-(p+\ell(I))} \,\bigr )  \cdot  \bigl(  e_I t^{\ell(I)}  \bigr)  
 \end{equation}
where each  $\sum _p h_{p,I} t^{-(p+\ell(I))}$  is an element of (the complexification of) $A(\mathbb{T}M)$. 
To prove the proposition it suffices to show that if $ \sum \sigma_p t^{-p} $ maps to zero under the morphism $\varepsilon_{X_m}$ in the statement of the proposition, then     each function 
\[
\sum _p h_{p,I} t^{-(p+\ell(I))}\in  A(\mathbb{T}M)
\]
  evaluates to zero   at $X_m$.  But according to  \eqref{eq-basic-sections-under-evaluation}, the elements $e_It^{\ell(I)}\in S(\mathbb{T}M)$ map to linearly independent elements under $\varepsilon_{X_m}$.  So the required vanishing follows from Lemma~\ref{lem-compatibility-with-module-action}.
\end{proof}

\begin{proposition}
\label{prop-nonzero-fibers}
Let $ m_1, m_2\in M$ and $\lambda \in \R^{\times}$.   The morphism 
\[
\varepsilon_{(m_1,m_2,\lambda)} \colon S_0(\mathbb{T}M) \longrightarrow S_{m_1}\otimes S_{m_2}^*
\]
defined by the formula 
\begin{equation*}
\varepsilon_{(m_1,m_2,\lambda)} \colon  \sum \sigma_p t^{-p} \longmapsto \sum  \lambda^{-p}\sigma_p(m_1,m_2) 
\end{equation*}
induces an isomorphism
\[
S(\mathbb{T}M)\vert _{(m_1,m_2,\lambda )}\stackrel{\cong}\longrightarrow S_{m_1}^{\phantom{*}}\otimes S_{m_2}^*.
\]
\end{proposition}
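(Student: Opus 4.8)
The plan is to mimic the proof of Proposition~\ref{prop-zero-fibers}, the argument being, if anything, simpler here because $\lambda$ is invertible. Write $\gamma = (m_1,m_2,\lambda)$ and let $I_\gamma \subseteq A(\mathbb{T}M)$ denote its vanishing ideal, so that by \eqref{eq-eval-at-point} an element $\sum f_p t^{-p}$ of $A(\mathbb{T}M)$ lies in $I_\gamma$ precisely when $\sum \lambda^{-p} f_p(m_1,m_2)=0$. The morphism $\varepsilon_\gamma$ in the statement is really defined on all of $S(\mathbb{T}M)$ (for $\lambda\neq 0$ the formula does not descend to the quotient $S_0(\mathbb{T}M)$), and a direct inspection of the defining formulas shows that it is multiplicative, in the sense that $\varepsilon_\gamma(f\sigma) = \varepsilon_\gamma^A(f)\,\varepsilon_\gamma(\sigma)$ for $f\in A(\mathbb{T}M)$ and $\sigma \in S(\mathbb{T}M)$, where $\varepsilon_\gamma^A$ is the character \eqref{eq-eval-at-point}. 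In particular $\varepsilon_\gamma$ annihilates $I_\gamma\cdot S(\mathbb{T}M)$ and so induces a linear map on the fiber $S(\mathbb{T}M)\vert_\gamma$; it remains to prove this induced map is a bijection.

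Surjectivity is immediate: every smooth section $\tau$ of $S\boxtimes S^*$ has scaling order $-\dim(M)$ or more, hence $\tau\cdot t^{\dim(M)} \in S(\mathbb{T}M)$, and $\varepsilon_\gamma(\tau\cdot t^{\dim(M)}) = \lambda^{\dim(M)}\tau(m_1,m_2)$; since $\lambda\neq 0$ and $\tau(m_1,m_2)$ may be prescribed arbitrarily, $\varepsilon_\gamma$ is onto.

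For injectivity, suppose $\sigma = \sum\sigma_p t^{-p}\in S(\mathbb{T}M)$ satisfies $\varepsilon_\gamma(\sigma)=0$; the goal is $\sigma\in I_\gamma\cdot S(\mathbb{T}M)$. Just as in the proof of Proposition~\ref{prop-zero-fibers}, choosing a cutoff $\varphi$ equal to $1$ near $(m_1,m_2)$ and using that $1-\varphi\in I_\gamma$ (it vanishes at $(m_1,m_2)$) reduces to the case where every $\sigma_p$ is supported in a small neighborhood $U$ of $(m_1,m_2)$. If $m_1\neq m_2$ one may take $U$ disjoint from the diagonal, so that every section supported in $U$ has infinite scaling order and every function supported in $U$ vanishes to all orders on the diagonal; expanding $\sigma$ in an arbitrary local frame for $S\boxtimes S^*$ over $U$ then presents $\sigma$ as an $A(\mathbb{T}M)$-linear combination of frame sections (viewed in $S(\mathbb{T}M)$), and the vanishing $\varepsilon_\gamma(\sigma)=0$ together with the linear independence of the frame values at $(m_1,m_2)$ forces every coefficient into $I_\gamma$. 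If $m_1=m_2=m$ one instead uses the frame $\{e_I\}$ constructed from a local orthonormal frame near $m$ via Proposition~\ref{prop-extension-from-diagonal}, exactly as in \eqref{eq-local-frame-expansion}--\eqref{eq-basic-sections-span1}: by Lemma~\ref{lem-local-frame-expansion} the coefficients of $\sigma$ reassemble into honest elements of $A(\mathbb{T}M)$ multiplying the sections $e_I t^{\ell(I)}\in S(\mathbb{T}M)$, and $\varepsilon_\gamma(e_I t^{\ell(I)}) = \lambda^{\ell(I)} e_I(m,m)$. Because the products $e_{i_1}\cdots e_{i_d}$ with $i_1<\cdots<i_d$ form a basis of $\clifford(T_mM)\cong S_m\otimes S_m^*$ --- this is the content of the quantization isomorphism \eqref{eq-quantization-map} --- the rescaled vectors $\lambda^{\ell(I)}e_I(m,m)$ are again a basis, so $\varepsilon_\gamma(\sigma)=0$ forces each $A(\mathbb{T}M)$-coefficient into $I_\gamma$, whence $\sigma\in I_\gamma\cdot S(\mathbb{T}M)$.

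I do not expect a genuine obstacle here: the only nontrivial input is the local description of scaling-order-$p$ sections near the diagonal (Proposition~\ref{prop-extension-from-diagonal} and Lemma~\ref{lem-local-frame-expansion}), which is already established and is invoked verbatim from the proof of Proposition~\ref{prop-zero-fibers}; the rest is bookkeeping with the Rees construction and with the characters \eqref{eq-eval-at-point}.
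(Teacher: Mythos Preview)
Your proof is correct and follows essentially the same approach as the paper: reduce to a local frame expansion (the special frame $\{e_I\}$ via Proposition~\ref{prop-extension-from-diagonal} and Lemma~\ref{lem-local-frame-expansion} when $m_1=m_2$, an arbitrary frame away from the diagonal), and use linear independence of the frame values at $(m_1,m_2)$ to force the $A(\mathbb{T}M)$-coefficients into $I_\gamma$. Your remark that the domain in the statement should be $S(\mathbb{T}M)$ rather than $S_0(\mathbb{T}M)$ is also correct.
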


\begin{proof}
The   morphism $\varepsilon_{(m_1,m_2,\lambda)}  $ above is obviously surjective and factors through the fiber, so it remains to prove injectivity of the induced map on the fiber.

Suppose first that $m_1=m=m_2$. Using the argument and  notation of the previous proof, if $\lambda \ne 0$, and if
\[
\sum_p \sum_I \lambda ^{-p} h_{p,I}(m,m) e_I (m,m) = 0  ,
\]
then for each $I$ 
\[
\sum _p \lambda ^{-(p+\ell(I))} h_{p,I} (m,m) = 0 ,
\]
since the vectors $\lambda ^{-\ell(I)}e_I (m,m)$ are linearly independent. So the formula \eqref{eq-basic-sections-span1} expresses any element of $S(\mathbb{T}M)$ that maps to zero in $S_m^{\phantom{*}}\otimes S_m^*$ as a combination  of elements in $A(\mathbb{T}M)$ that vanish at $(m,m,\lambda)$, times  elements in $S(\mathbb{T}M)$, as required.

If $m_1\ne m_2$, then we need only replace the local frame $\{ e_I\}$ near $(m,m)$ by any local frame of $S{\boxtimes} S^*$ near $(m_1,m_2)$ (away from the diagonal there is no need to invoke Proposition~\ref{prop-extension-from-diagonal}).  Then we may proceed as above.
\end{proof}

\subsection{The Rescaled Spinor Bundle}

We are now ready to construct the \emph{rescaled spinor bundle} $\mathbb{S}$ over $\mathbb{T}M$.

\begin{definition}
Define a family of vector spaces $\mathbb{S}_\gamma$ parametrized by  $\gamma \in \mathbb{T}M$ as follows:
 \begin{equation}
 \label{eq-fibers-rescaled-bundle}
 \mathbb{S}_\gamma 
 	= \begin{cases}
	S_{m_1}^{\phantom{*}}{\otimes}S_{m_2}^{*} & \gamma = (m_1,m_2,\lambda) \\
	\wedge^* T_mM & \gamma = (X_m,0) .
	\end{cases}
 \end{equation}
Denote by $\sigma\mapsto \widehat \sigma$
the morphism of $A(\mathbb{T}M)$-modules 
\begin{equation*}
 S(\mathbb{T}M) \longrightarrow  \prod_{\gamma\in \mathbb{T}M} \mathbb{S}_\gamma ,
\end{equation*}
that associates to each $\sigma \in S(\mathbb{T}M)$ its value in each fiber $S(\mathbb{T}M)\vert _\gamma$ under the identifications in  Propositions \ref{prop-zero-fibers} and \ref{prop-nonzero-fibers}.  
\end{definition}

\begin{lemma}
The above morphism   is  injective.
\end{lemma}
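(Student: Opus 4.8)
The plan is to show that if $\sigma = \sum_p \sigma_p t^{-p} \in S(\mathbb{T}M)$ maps to zero in every fiber $\mathbb{S}_\gamma$, then $\sigma = 0$ in $S(\mathbb{T}M)$, i.e.\ every $\sigma_p$ is the zero section of $S\boxtimes S^*$. It is enough to work locally: fix a point, and show that each $\sigma_p$ vanishes in a neighbourhood of it. Away from the diagonal there is nothing subtle — the fibers $\mathbb{S}_{(m_1,m_2,\lambda)}$ for $\lambda \neq 0$ are literally $S_{m_1}\otimes S_{m_2}^*$, and vanishing of $\widehat\sigma$ at all such $\gamma$ forces $\sum_p \lambda^{-p}\sigma_p(m_1,m_2) = 0$ for all $\lambda \in \R^\times$; since this is a Laurent polynomial in $\lambda$ that vanishes identically, each $\sigma_p(m_1,m_2) = 0$. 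Thus $\sigma_p \equiv 0$ off the diagonal, and by continuity $\sigma_p$ vanishes on all of $M\times M$ outside the closure of any diagonal neighbourhood; so the only thing that needs a real argument is vanishing \emph{near the diagonal}.

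So fix $m\in M$, choose the local orthonormal frame $\{e_i\}$ and the associated sections $e_I$ of $S\boxtimes S^*$ (extended off the diagonal using Proposition~\ref{prop-extension-from-diagonal}) as in the proof of Proposition~\ref{prop-zero-fibers}, and write $\sigma_p = \sum_I h_{p,I} e_I$. By Lemma~\ref{lem-local-frame-expansion}, each $h_{p,I}$ vanishes to order $p + \ell(I)$ or more on the diagonal, so as in \eqref{eq-basic-sections-span1} we can rewrite
\[
\sigma = \sum_I g_I \cdot \bigl(e_I t^{\ell(I)}\bigr),
\qquad g_I = \sum_p h_{p,I}\, t^{-(p+\ell(I))} \in A(\mathbb{T}M)\otimes\C .
\]
Because the elements $e_I t^{\ell(I)}$ map, under the fiber identifications of Propositions~\ref{prop-zero-fibers} and~\ref{prop-nonzero-fibers}, to a collection of elements that is a basis of $\mathbb{S}_\gamma$ at every $\gamma$ lying over a point near $m$ — this is exactly what \eqref{eq-basic-sections-under-evaluation} says at $\lambda=0$ (the leading terms $e_{i_1}\wedge\cdots\wedge e_{i_d}$ are linearly independent, and the "higher-degree" correction is strictly upper-triangular with respect to the grading), and it is immediate at $\lambda\neq 0$ since $\{e_I(m_1,m_2)\}$ is a frame of $S\boxtimes S^*$ — the hypothesis $\widehat\sigma = 0$ forces $\widehat{g_I}(\gamma) = 0$ for every $\gamma$ over a neighbourhood of $m$ and every $I$. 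In other words each coefficient $g_I \in A(\mathbb{T}M)\otimes\C$ vanishes at every character of $A(\mathbb{T}M)$ near $m$.

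The last step is then: a function in $A(\mathbb{T}M)$ (equivalently, a smooth function on the deformation space $\mathbb{T}M$) that vanishes at all points of an open subset of $\mathbb{T}M$ must vanish identically on that open set, hence all of its Laurent coefficients $h_{p,I}$ vanish there — and since we already showed $\sigma_p$ vanishes off the diagonal, we conclude $h_{p,I} \equiv 0$ near $(m,m)$, so $\sigma_p \equiv 0$ near $(m,m)$. Ranging over all $m$ gives $\sigma_p \equiv 0$ on $M\times M$, hence $\sigma = 0$. The one point that requires a word of care — and the place I expect to spend most of the effort — is verifying that the $e_I t^{\ell(I)}$ really do form a fiberwise basis at $\lambda = 0$ uniformly in a neighbourhood of $m$, i.e.\ upgrading the pointwise statement \eqref{eq-basic-sections-under-evaluation} (and the unravelling of $\varepsilon_{X_m}$ via $\exp(\boldsymbol{\nabla_X})$) into the claim that the matrix expressing $\widehat{e_I t^{\ell(I)}}$ is invertible at all nearby $\gamma$; once that triangularity is in hand the rest is the routine Laurent-polynomial and "smooth functions vanishing on an open set" bookkeeping indicated above.
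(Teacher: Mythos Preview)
Your argument is correct, but it is far more elaborate than necessary, and the paper's proof is essentially contained in your first paragraph. The key observation you are missing is that the Laurent-polynomial argument you give ``away from the diagonal'' works at \emph{every} point $(m_1,m_2)\in M\times M$, diagonal or not: for any $(m_1,m_2)$ and any $\lambda\ne 0$ the point $(m_1,m_2,\lambda)$ lies in $\mathbb{T}M$, the fiber $\mathbb{S}_{(m_1,m_2,\lambda)}$ is $S_{m_1}\otimes S_{m_2}^*$, and $\varepsilon_{(m_1,m_2,\lambda)}(\sigma)=\sum_p\lambda^{-p}\sigma_p(m_1,m_2)$. Vanishing of this for all $\lambda\ne 0$ forces each $\sigma_p(m_1,m_2)=0$. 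This is exactly the paper's proof, and it uses only the $\lambda\ne 0$ fibers; the $\lambda=0$ fibers are never needed.

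Even if one insisted on your restriction to off-diagonal points, the argument would still end immediately: once $\sigma_p$ vanishes on the dense open set $\{m_1\ne m_2\}$, continuity gives $\sigma_p\equiv 0$ on all of $M\times M$. Your second and third paragraphs---the local frame expansion, the fiberwise-basis verification for $e_I t^{\ell(I)}$, and the character-spectrum bookkeeping---are all sound, but they are doing work that is simply not required here. (They are essentially recapitulating the machinery behind Propositions~\ref{prop-zero-fibers} and~\ref{prop-nonzero-fibers}, whereas injectivity needs only the trivial direction of the latter.)
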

 
 \begin{proof}
 Let  $(m_1,m_2)\in M{\times}M$.  If an element $\sum_{p} \sigma_p t^{-p}$ maps to zero, then it follows from the formula for the morphisms $\varepsilon_{(m_1,m_2,\lambda)}$ that 
 \[
  \sum_{p}  \lambda^{-p}\sigma_p(m_1,m_2) = 0
\]
for all $\lambda \ne 0$.  But this implies that $\sigma_p(m_1,m_2)=0$ for all $p$.  Hence  $\sigma_p=0$ for all $p$, and so $\sum_{p} \sigma_p t^{-p} =0$.
 \end{proof}

 \begin{definition}
 \label{def-sheaf-of-sections}
 We shall denote by $\sheafS_{\mathbb{T}M}$ the sheaf on $\mathbb{T}M$ consisting of sections  
 \[
 \mathbb{T}M \ni \gamma \longmapsto \tau(\gamma) \in \mathbb{S}_\gamma
 \]
that are locally of the form
  \[
 \tau(\gamma)  = \sum_{j=1}^N f_j(\gamma)  \cdot \widehat \sigma_j(\gamma)
 \]
for some $N\in \N$,  where   $f_1,\dots, f_N$ are smooth, complex-valued functions on $\mathbb{T}M$  and   $\sigma_1,\dots, \sigma_N$  belong to $S(\mathbb{T}M)$.
 \end{definition}

\begin{theorem}
The sheaf $\sheafS_{\mathbb{T}M}$ is   locally free, of rank $2^{\dim(M)}$, as a sheaf of modules over  $\sheafA_{\mathbb{T}M}$.
\end{theorem}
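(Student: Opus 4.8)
The plan is to establish local freeness by exhibiting, near each point of $\mathbb{T}M$, an explicit frame of $2^{\dim(M)}$ sections of $\sheafS_{\mathbb{T}M}$ and showing it generates the stalk freely over $\sheafA_{\mathbb{T}M}$. The natural candidate is the collection $\{\widehat e_I\}$, where $I$ ranges over the $2^n$ subsets of $\{1,\dots,n\}$ (with $n = \dim M$), and $e_I t^{\ell(I)} \in S(\mathbb{T}M)$ is the Rees element built in Subsection~\ref{subsec-rescaled-module} from a local orthonormal frame $\{e_1,\dots,e_n\}$ of $TM$ near $m$, extended off the diagonal via Proposition~\ref{prop-extension-from-diagonal}. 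The work is entirely local over a basic open set $U \subseteq M$ supporting such a frame, together with the corresponding open subset $\mathbb{T}U \subseteq \mathbb{T}M$.

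\textbf{Key steps.} First I would check that $\{\widehat e_I\}$ spans each fiber $\mathbb{S}_\gamma$ for $\gamma \in \mathbb{T}U$, and is in fact a basis. For $\gamma = (m_1,m_2,\lambda)$ with $\lambda \ne 0$ this is immediate: under $\varepsilon_{(m_1,m_2,\lambda)}$ the section $e_I t^{\ell(I)}$ evaluates to $\lambda^{\ell(I)} e_I(m_1,m_2)$, and the $e_I(m_1,m_2)$ form a basis of $S_{m_1}\otimes S_{m_2}^*$ since the $e_I$ are a local frame. For $\gamma = (X_m,0)$ the relevant computation is \eqref{eq-basic-sections-under-evaluation} in the proof of Proposition~\ref{prop-zero-fibers}: $e_I t^{\ell(I)}$ maps under $\varepsilon_{X_m}$ to $e_{i_1}\wedge\cdots\wedge e_{i_d}$ plus strictly higher-degree terms, so the matrix expressing $\{\widehat e_I\}$ against the monomial basis of $\wedge^* T_mM$ is unipotent-triangular with respect to degree, hence invertible. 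Thus $\{\widehat e_I\}$ is a fiberwise basis over all of $\mathbb{T}U$. Second, I would show that the $\sheafA_{\mathbb{T}M}$-module map $\sheafA_{\mathbb{T}U}^{\,2^n} \to \sheafS_{\mathbb{T}M}|_{\mathbb{T}U}$ sending the standard basis to $\{\widehat e_I\}$ is an isomorphism of sheaves. Surjectivity holds because any local section of $\sheafS_{\mathbb{T}M}$ is, by Definition~\ref{def-sheaf-of-sections}, a finite $\sheafA_{\mathbb{T}M}$-combination of elements $\widehat \sigma$ with $\sigma \in S(\mathbb{T}M)$, and each such $\sigma$ (after multiplying by a cutoff supported in $\mathbb{T}U$, exactly as in the proof of Proposition~\ref{prop-zero-fibers}) can be rewritten via the frame expansion \eqref{eq-basic-sections-span1} as $\sum_I g_I \cdot (e_I t^{\ell(I)})$ with $g_I \in A(\mathbb{T}M)$; this exhibits $\widehat\sigma$ as an $\sheafA$-combination of the $\widehat e_I$. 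Injectivity is the statement that a relation $\sum_I f_I \widehat e_I = 0$ with $f_I \in \sheafA_{\mathbb{T}M}(\mathbb{T}U)$ forces all $f_I = 0$; evaluating at each $\gamma$ and using that $\{\widehat e_I(\gamma)\}$ is a basis of $\mathbb{S}_\gamma$ gives $f_I(\gamma) = 0$ for every $\gamma$, hence $f_I = 0$ as a function.

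\textbf{Main obstacle.} The genuine subtlety is confirming that the $\widehat e_I$ are \emph{continuous} (equivalently, smooth) sections — i.e.\ that they actually lie in $\sheafS_{\mathbb{T}M}$ — and, relatedly, that the sheaf-theoretic surjectivity argument respects the smooth structure encoded in $\sheafA_{\mathbb{T}M}$ rather than merely working fiber by fiber. Since $e_I t^{\ell(I)}$ is by construction an element of $S(\mathbb{T}M)$, its image $\widehat{e_I t^{\ell(I)}}$ is a section in $\sheafS_{\mathbb{T}M}$ by the very definition of that sheaf; but one must be careful that the identifications in Propositions~\ref{prop-zero-fibers} and~\ref{prop-nonzero-fibers} are used consistently, so that ``$\widehat e_I$'' unambiguously denotes that section. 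The only remaining point is that evaluation $f \mapsto (\gamma \mapsto f(\gamma))$ is injective on $\sheafA_{\mathbb{T}M}$, which holds because $\mathbb{T}M$ is (by the theorems quoted from \cite{HajSaeediSadeghHigson18}) a genuine smooth manifold with $\sheafA_{\mathbb{T}M}$ its sheaf of smooth functions, and smooth functions on a manifold are determined by their pointwise values. Once this is in hand the triangularity computation does all the real work, and local freeness of rank $2^n = 2^{\dim M}$ follows.
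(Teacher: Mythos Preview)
Your proposal is correct and follows essentially the same approach as the paper: both arguments use the local frame $\{\widehat{e_I t^{\ell(I)}}\}$, establish spanning via the Rees expansion \eqref{eq-basic-sections-span1}, and verify fiberwise linear independence (at $\lambda=0$ via the triangularity observation \eqref{eq-basic-sections-under-evaluation}, at $\lambda\ne 0$ because the $e_I$ are a local frame of $S\boxtimes S^*$). The paper's proof is simply a terser version of what you wrote, and your discussion of the ``main obstacle'' correctly identifies and dispatches the only point requiring care.
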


\begin{proof} Let us prove that the sheaf is   free in a neighborhood of $\gamma = (X_m,0)$; the other    $\gamma\in \mathbb{T}M$ are handled in the same way.  Consider a section $\tau$ of $\sheafS_{\mathbb{T}M}$ as in Definition~\ref{def-sheaf-of-sections} above. Locally we may write each $\sigma_j\in S(\mathbb{T}M)$ as 
\[
\sigma_j = \sum _{I} \,  \bigl (\,  \sum _p h_{j,p,I} t^{-(p+\ell(I))} \,\bigr )  \cdot  \bigl(  e_I t^{\ell(I)}  \bigr)  
\]
as in \eqref{eq-basic-sections-span1}.  So near $\gamma$, the section $\tau$ is a linear combination  the sections $\widehat{e_I t^{\ell(I)}}$.  But these spanning sections are also linearly independent in each fiber $\mathbb{S}_\eta $, for $\eta$ near $\gamma$.  So they are linearly independent over the smooth functions on $\mathbb{T}M$, near $\gamma$, as required.
\end{proof}

 \begin{definition}
 We shall denote by $\mathbb{S}$ the unique smooth vector bundle over $\mathbb{T}M$ whose fibers are the spaces $\mathbb{S}_\gamma $ in \eqref{eq-fibers-rescaled-bundle} and whose smooth sections are the sections of the sheaf  $\sheafS_{\mathbb{T}M}$.
 \end{definition}
   
\subsection{Families of Differential Operators and the Getzler Symbol}
We wish to prove the following spinorial counterpart of Theorem~\ref{thm-families-of-ops}:

\begin{theorem}
\label{thm-getzler-family}
Let $D$ be a linear partial differential operator on $M$, acting on the sections of the spinor bundle, of Getzler order $q$.
The family of linear partial differential operators 
\[
D_{(m,\lambda)} = \lambda ^q D ,
\]
defined on those source fibers of $\mathbb{T}M$ with $\lambda \ne 0$, extends to a smooth family of linear partial differential operators  on all the source fibers of the tangent groupoid, acting on sections of $\mathbb {S}$. 
\end{theorem}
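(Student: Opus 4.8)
The plan is to reduce the statement to the case where $D$ is a product of generators of the Getzler filtration and then to reinterpret the rescaled family in terms of the module $S(\mathbb{T}M)$ and the operators introduced in Subsection~\ref{subsec-rescaled-module}. Concretely, since $D$ has Getzler order $q$ or less, it is locally a finite sum of operators of the form $f\cdot D_1\cdots D_q$, where each $D_j$ is either $\nabla_X$ for some vector field $X$, or $c(X)$, or the identity. Multiplication by the scalar $f$ is harmless (it preserves smoothness of sections of $\mathbb{S}$), so it suffices to treat a single product of $q$ Getzler-degree-one operators, and then to understand the rescaled version $\lambda^q D_1\cdots D_q$, which factors as $(\lambda D_1)\circ\cdots\circ(\lambda D_q)$. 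So the whole theorem reduces to showing that each of the two basic rescaled operators, namely $\lambda\nabla_X$ (for $\lambda\neq 0$) and $\lambda\, c(X)$ (for $\lambda\neq 0$), extends to a smooth fiberwise operator on $\mathbb{S}$ over all of $\mathbb{T}M$, and that these extensions compose correctly.

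The key point is that each basic operator acts naturally on the Rees module $S(\mathbb{T}M)$. For the covariant derivative, the operator $\boldsymbol{\nabla_X}\colon \sum\sigma_p t^{-p}\mapsto \sum \nabla_X\sigma_p\, t^{-(p-1)}$ of Definition~\ref{def-nabla-map-for-S-zero} makes sense already at the level of $S(\mathbb{T}M)$ (not just its quotient $S_0(\mathbb{T}M)$): by the lemma following Lemma~\ref{lem-S-module-over-A}, $\nabla_X$ lowers scaling order by at most one, so $\nabla_X\sigma_p$ has scaling order $\ge p-1$ and the formula lands in $S(\mathbb{T}M)$. This Rees-level operator is, by construction, fiberwise equal to $\lambda\nabla_X$ at the fibers with $\lambda\neq 0$ (acting on $S\boxtimes S^*$ through the first factor), and by the computation defining $\varepsilon_{X_m}$ on $S_0(\mathbb{T}M)$ it restricts to the appropriate operator at the $\lambda=0$ fibers; thus it implements the desired smooth family, since any operator on $S(\mathbb{T}M)$ compatible with the $A(\mathbb{T}M)$-module structure (up to the derivation $\boldsymbol X$ of the coordinate algebra, as in Lemma~\ref{lem-derivation-property-for-S}) passes to a smooth bundle map on $\mathbb{S}$ by the same argument used to build the sheaf $\sheafS_{\mathbb{T}M}$. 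For Clifford multiplication, we define the analogous Rees-level operator $\sum \sigma_p t^{-p}\mapsto \sum c(X)\sigma_p\, t^{-(p-1)}$, where $c(X)$ acts on $S\boxtimes S^*$ through the first factor; the crucial fact is that $c(X)$ has Getzler order one, hence lowers scaling order by at most one (again by the lemma after Lemma~\ref{lem-S-module-over-A}), so this too is a well-defined operator on $S(\mathbb{T}M)$, and on the $\lambda\neq 0$ fibers it is visibly $\lambda\,c(X)$. One then checks, as in \eqref{eq-symbol-computation}, that on the $\lambda=0$ fiber $\wedge^*T_mM$ this operator acts as exterior multiplication by the symbol of $c(X)$, namely by the one-form dual to $X_m$—this is precisely where the identification \eqref{eq-clifford-filtration3} of $\clifford_d/\clifford_{d-1}$ with $\wedge^d$ enters, since $c(X)$ raises Clifford order by one, matching the degree-raising effect of $\widehat{e_I t^{\ell(I)}}$ under the basic expansion \eqref{eq-basic-sections-span1}.

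I would then assemble the general case: given the local expression $D=\sum f\cdot D_1\cdots D_q$, define the extension of $\lambda^q D$ to be $\sum f\cdot \widetilde{D_1}\circ\cdots\circ\widetilde{D_q}$, where $\widetilde{D_j}$ is the Rees-level lift just described for $\nabla_X$ or $c(X)$ (and the identity for the identity), and multiplication by $f$ means the module action of $f\in A(\mathbb{T}M)\subseteq C^\infty(\mathbb{T}M)$ composed, if needed, with the constant-in-$t$ inclusion. This composite is an operator $S(\mathbb{T}M)\to S(\mathbb{T}M)$, hence descends to a smooth family on $\mathbb{S}$ by the sheaf argument; on the fibers with $\lambda\neq 0$ it is manifestly $\lambda^q D$; and on the $\lambda=0$ fibers it is, by the previous paragraph and the commutativity of the relevant derivations, the Getzler model operator $D_{\langle m\rangle}$ acting on $\wedge^*T_mM$. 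The main obstacle—and the one point requiring genuine care rather than formal bookkeeping—is the well-definedness of the Clifford lift on $S(\mathbb{T}M)$ itself, i.e.\ verifying that $c(X)$ really does lower scaling order by at most one on honest (not merely diagonal-value) sections; this is exactly the content that makes Proposition~\ref{prop-extension-from-diagonal} and the Getzler-order-one status of $c(X)$ indispensable, and it is what prevents the naive ``constant-coefficient'' reasoning of Theorem~\ref{thm-families-of-ops} from simply carrying over. A secondary subtlety is checking that the extension does not depend on the chosen local factorization of $D$, which follows because any two such factorizations differ by relations that already hold in the Rees module, the module map $S(\mathbb{T}M)\to\prod_\gamma\mathbb{S}_\gamma$ being injective.
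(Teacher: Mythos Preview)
Your proposal is correct and follows essentially the same route as the paper: reduce to the generators $\nabla_X$ and $c(X)$, lift each to a Rees-level operator on $S(\mathbb{T}M)$ via $\sum\sigma_p t^{-p}\mapsto\sum (D\sigma_p)t^{-(p-1)}$ (well-defined because Getzler-order-one operators drop scaling order by at most one), observe that on $\lambda\neq 0$ fibers this recovers $\lambda D$, and conclude smoothness of the family from the sheaf description of $\sheafS_{\mathbb{T}M}$. The paper carries this out in the two lemmas immediately following the theorem (Lemmas~\ref{lem-getzler-symbol-of-clifford-mult} and~\ref{lem-getzler-symbol-of-nabla}), where it also computes the $\lambda=0$ limits explicitly: for $c(X)$ one gets exterior multiplication by $X_m\in\wedge^1 T_mM$ (not ``the one-form dual to $X_m$''---the exterior algebra here is built on $T_mM$), and for $\nabla_X$ one gets directional differentiation plus a curvature term, the latter requiring the Baker--Campbell--Hausdorff identity of Lemma~\ref{lem-heisenberg} applied to $\boldsymbol{\nabla_X}$ and $\boldsymbol{\nabla_Y}$ on $S_0(\mathbb{T}M)$; your sketch gestures at these computations but does not carry them out, and in particular you do not mention the curvature contribution that distinguishes the Getzler symbol of $\nabla_X$ from the ordinary one. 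One small correction to your diagnosis: the ``main obstacle'' you flag---that $c(X)$ lowers scaling order by at most one---is immediate from the definition of scaling order and needs neither Proposition~\ref{prop-extension-from-diagonal} nor any special care; the genuinely nontrivial step is the $\lambda=0$ computation, which hinges on the commutator relations $[\boldsymbol{\nabla_Y},\boldsymbol{c(X)}]=0$ and $[\boldsymbol{\nabla_Y},\boldsymbol{\nabla_X}]=\boldsymbol{K(Y,X)}$ in $S_0(\mathbb{T}M)$.
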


As with scalar case, it suffices to consider generators of the algebra of linear partial differential operators, in this case covariant derivatives $\nabla_X$ and Clifford multiplication operators  $c(X)$.  Let us begin with the latter, which are easier.  

\begin{lemma}
\label{lem-getzler-symbol-of-clifford-mult}
Let $X$ be a vector field on $M$.
The family of Clifford multiplication operators 
\[
D_{(m,\lambda)} = \lambda  c(X) ,
\]
defined on those source fibers of $\mathbb{T}M$ with $\lambda \ne 0$, and acting on sections of $\mathbb {S}$, extends to a smooth family on all the source fibers of $\mathbb{T}M$.  The operator on the source fiber  $\mathbb{T}M_{(m,0)} \cong T_{m}M$ is the exterior multiplication operator
\[
\wedge^* T_m M \ni \omega \longmapsto X_m\wedge \omega \in \wedge^* T_m M .
\]
\end{lemma}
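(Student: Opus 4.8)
The statement concerns the family $D_{(m,\lambda)} = \lambda c(X)$ acting on sections of $\mathbb{S}$, and the claim has two parts: smoothness of the extension across $\lambda = 0$, and identification of the $\lambda = 0$ operator as exterior multiplication by $X_m$. The natural strategy is to work at the level of the coordinate module $S(\mathbb{T}M)$ and its quotient $S_0(\mathbb{T}M)$, exactly as in the proof of Theorem~\ref{thm-families-of-ops}, since a smooth family of operators on source fibers is the same thing as an operator on sections that preserves smoothness, and $S(\mathbb{T}M)$ is dense (in the appropriate sense) in the smooth sections of $\mathbb{S}$. The key algebraic observation is that $c(X)$, viewed as a degree-one operator in the Getzler filtration, should \emph{raise} the Rees degree by one on $S(\mathbb{T}M)$: if $\sigma_p$ has scaling order $\geq p$, then $c(X)\sigma_p$ has scaling order $\geq p-1$, so multiplying by an extra factor of $t^{-1}$ lands us back in $S(\mathbb{T}M)$. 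Thus the operator
\[
\boldsymbol{c(X)} \colon \sum \sigma_p t^{-p} \longmapsto \sum c(X)\sigma_p \, t^{-(p-1)}
\]
is well-defined on $S(\mathbb{T}M)$, clearly restricts to $\lambda c(X)$ on the fibers with $\lambda \neq 0$ (by the formula for $\varepsilon_{(m_1,m_2,\lambda)}$ in Proposition~\ref{prop-nonzero-fibers}), and being $A(\mathbb{T}M)$-linear up to the derivation-type correction it extends to a differential operator on the sheaf $\sheafS_{\mathbb{T}M}$, hence a smooth family.

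First I would verify the scaling-order claim: $c(X)$ has Getzler order $\leq 1$, so for any differential operator $D$ of Getzler order $\leq q$, the composite $D \circ c(X)$ has Getzler order $\leq q+1$; hence if $\sigma$ has scaling order $\geq p$ then $\CliffordOrder(D c(X)\sigma) \le (q+1) - p = q - (p-1)$, which says precisely that $c(X)\sigma$ has scaling order $\geq p-1$. (Here I use that $S(\mathbb{T}M)$ is defined via Definition~\ref{def-scaling-order} and that $c(X)$ acts through the first factor of $M \times M$, matching the convention already in force.) This shows $\boldsymbol{c(X)}$ maps $S(\mathbb{T}M)$ to itself. Next I would record that $\boldsymbol{c(X)}$ satisfies the Leibniz-type compatibility $\boldsymbol{c(X)}(f\sigma) = \boldsymbol{X}(f)\cdot \text{(something)} + \dots$ — actually, since Clifford multiplication is $C^\infty(M)$-linear (it is a zeroth-order differential operator, tensorial), one gets the cleaner relation $\boldsymbol{c(X)}(f\sigma) = f \cdot \boldsymbol{c(X)}(\sigma)$ only after accounting for the $t$-shift: writing $f = \sum f_q t^{-q}$, one finds $\boldsymbol{c(X)}(f\sigma)$ and $f\cdot\boldsymbol{c(X)}(\sigma)$ differ by the shift bookkeeping, which is exactly the statement that $\boldsymbol{c(X)}$ descends to a bundle endomorphism of $\mathbb{S}$ twisted by the grading. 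The honest way to phrase this is via $S_0(\mathbb{T}M)$: the induced operator on $S_0(\mathbb{T}M)$ has grading degree $+1$ and is $A_0(\mathbb{T}M)$-linear in the graded sense, so it passes to each fiber $S(\mathbb{T}M)\vert_{(X_m,0)}$.

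To identify the $\lambda = 0$ operator, I would trace through the isomorphism of Proposition~\ref{prop-zero-fibers}. A section of the form $e_I t^{\ell(I)}$ maps under $\varepsilon_{X_m}$ to $e_{i_1} \wedge \cdots \wedge e_{i_d}$ plus higher-degree terms, and $\boldsymbol{c(X)}(e_I t^{\ell(I)}) = c(X) e_I \, t^{\ell(I)+1}$. Now $c(X)e_I = q\big(X_m \wedge q^{-1}(\cdot)\big)$-type expression plus lower Clifford order: the Clifford product $X \cdot e_I$ has leading symbol (top Clifford-order part) equal to $e_{j} \wedge e_{i_1}\wedge\cdots\wedge e_{i_d}$ when $X = e_j$, and the lower-order correction terms are exactly contraction terms $\langle X, e_{i_k}\rangle e_{i_1}\wedge\cdots\widehat{e_{i_k}}\cdots$. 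But since $e_I t^{\ell(I)+1}$ has Rees degree $\ell(I)+1$, only the top Clifford-order part of $c(X)e_I$ — which has Clifford order $\ell(I)+1$ — survives in the fiber over $(X_m,0)$ after applying $\varepsilon_m$; the contraction terms have Clifford order $\ell(I)-1 < \ell(I)+1$ and hence vanish in the graded quotient. So under the identification $S(\mathbb{T}M)\vert_{(X_m,0)} \cong \wedge^* T_mM$, the operator $\boldsymbol{c(X)}$ becomes $\omega \mapsto X_m \wedge \omega$, exactly as claimed. (I would double-check the sign and the absence of the contraction terms against the Clifford relation $e_j e_I = e_j \wedge e_I - (\text{contractions})$ and the fact that $q$ intertwines the exterior grading with the Clifford filtration, per \eqref{eq-clifford-filtration3}.)

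\textbf{Main obstacle.} The substantive point — and the only place genuine care is needed — is that passing to fibers kills precisely the lower-Clifford-order (contraction) terms, so the formula is \emph{exactly} exterior multiplication with no contraction correction. This is really a statement about the interaction between the Rees/scaling degree and the Clifford filtration, and it is where Proposition~\ref{prop-extension-from-diagonal} (used to produce the extensions $e_I$ off the diagonal) implicitly does its work: one must know that the lower-order terms in $c(X)e_I$, when written in the frame $\{e_J\}$, have coefficient functions vanishing to high enough order on the diagonal that they contribute nothing to $\varepsilon_{X_m}$. Everything else — well-definedness, smoothness via the sheaf $\sheafS_{\mathbb{T}M}$, agreement with $\lambda c(X)$ for $\lambda \neq 0$ — is routine bookkeeping of the kind already carried out for the scalar case and for the fiber computations in Propositions~\ref{prop-zero-fibers} and~\ref{prop-nonzero-fibers}.
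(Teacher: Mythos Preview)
Your setup matches the paper's: you define $\boldsymbol{c(X)}$ on $S(\mathbb{T}M)$ by $\sum \sigma_p t^{-p} \mapsto \sum c(X)\sigma_p\, t^{-(p-1)}$, verify it is well-defined using the Getzler order of $c(X)$, and check that on fibers with $\lambda\neq 0$ it gives $\lambda c(X)$. That part is fine.

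The gap is in your identification of the $\lambda=0$ operator. You compute the image of $c(X)e_I\, t^{\ell(I)+1}$ under $\varepsilon_m$ and observe that only the top Clifford-order part survives---correct, and this gives $\varepsilon_m(\boldsymbol{c(X)}\sigma)=X_m\wedge\varepsilon_m(\sigma)$. But the source fiber $\mathbb{T}M_{(m,0)}$ is the \emph{whole} tangent space $T_mM$, and the fiber of $\mathbb{S}$ over a point $Y_m\in T_mM$ is identified with $\wedge^*T_mM$ via $\varepsilon_{Y_m}=\varepsilon_m\circ\exp(\boldsymbol{\nabla_Y})$, not via $\varepsilon_m$. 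Your basis computation only checks the formula at the zero vector $Y_m=0$; at a general $Y_m$ the ``higher-degree terms'' you acknowledge in $\varepsilon_{Y_m}(e_I t^{\ell(I)})$ are produced by iterated $\boldsymbol{\nabla_Y}$'s, and you never argue that $\boldsymbol{c(X)}$ interacts correctly with those. (Your notation also conflates the vector field $X$ used for Clifford multiplication with the tangent vector labeling the fiber, which obscures exactly this point.)

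The paper supplies the missing step cleanly: from $\nabla_Y c(X)-c(X)\nabla_Y=c(\nabla_Y X)$ one sees that the commutator has Getzler order one, not two, so $\boldsymbol{\nabla_Y}$ and $\boldsymbol{c(X)}$ \emph{commute} as operators on $S_0(\mathbb{T}M)$. Hence $\exp(\boldsymbol{\nabla_Y})$ commutes with $\boldsymbol{c(X)}$, and
\[
\varepsilon_{Y_m}(\boldsymbol{c(X)}\sigma)
=\varepsilon_m\bigl(\boldsymbol{c(X)}\exp(\boldsymbol{\nabla_Y})\sigma\bigr)
=X_m\wedge\varepsilon_m\bigl(\exp(\boldsymbol{\nabla_Y})\sigma\bigr)
=X_m\wedge\varepsilon_{Y_m}(\sigma),
\]
where the middle equality is exactly your $\varepsilon_m$ computation. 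So your ``main obstacle'' paragraph has the right instinct but misidentifies where the work lies: the delicate point is not killing contraction terms at $\varepsilon_m$ (that is immediate from the definitions), but propagating the formula from $Y_m=0$ to all $Y_m$ via the commutation relation.
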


\begin{proof}
The formula 
\[
\sum \sigma_p t^{-p} \longmapsto \sum c(X) \sigma_p t^{-(p-1)}
\]
defines a linear operator $\boldsymbol{c(X)}$ on $S(\mathbb{T}M)$. Moreover it is clear that 
\[
\varepsilon_{(m_1,m_2,\lambda)} \bigl (\boldsymbol{c(X)} \sigma \bigr ) = \lambda  c(X_{m_1})\cdot  \varepsilon_{(m_1,m_2,\lambda)}  ( \sigma  ) .
\]
This is enough to show that the family of operators $\{ \lambda c(X) \}$ on the source fibers with $\lambda \ne 0$ maps smooth sections of $\mathbb{S}$ over the full tangent groupoid to smooth sections. To compute the extension to the fibers where $\lambda = 0$ we need to show that for any tangent vector $Y_m$, 
\[
\varepsilon_{Y_m} (\boldsymbol{c(X)} \sigma \bigr ) = X_{m}\wedge   \varepsilon_{Y_m}  ( \sigma  ) .
\]
To see this, we note first that 
\[
\varepsilon_m (\boldsymbol{c(X)} \tau \bigr ) =
X_m\wedge \varepsilon_m (\tau \bigr )
\]
for any $\tau \in S_0(\mathbb{T}M)$, which is clear from the definitions.  Next,
the formula  
\[
\nabla_Y  c(X) - c(X) \nabla_Y  = c(\nabla_YX) \colon C^\infty ( M{\times}M, S{\boxtimes} S^*) \longrightarrow 
C^\infty ( M{\times}M, S{\boxtimes} S^*) 
\]
shows that the Getzler order-one operators $c(X)$ and  $\nabla_Y$ commute up to an operator of  Getzler order one, not two.  As a result,
\[
\boldsymbol{\nabla_Y} \boldsymbol{c(X)} = 
\boldsymbol{c(X)} \boldsymbol{\nabla_Y} \colon S_0(\mathbb{T}M) \longrightarrow S_0(\mathbb{T}M),
\]
and it  therefore follows that 
\[
\begin{aligned}
\varepsilon_{Y_m} \bigl ( \boldsymbol{c(X)} \sigma \bigr ) 
& =
\varepsilon_m \bigl (\exp(\boldsymbol{\nabla_Y})
\boldsymbol{c(X)} \sigma \bigr )  \\ 
& =
\varepsilon_m \bigl (
\boldsymbol{c(X)}  \exp(\boldsymbol{\nabla_Y})\sigma \bigr )  \\
& =
X_m \wedge \varepsilon_{Y_m}(\sigma),
\end{aligned}
\]
as required.
\end{proof}

 \begin{lemma}
\label{lem-getzler-symbol-of-nabla}
Let $X$ be a vector field on $M$.
The family of covariant derivatives 
\[
D_{(m,\lambda)} = \lambda  \nabla_X ,
\]
defined on those source fibers of $\mathbb{T}M$ with $\lambda \ne 0$, and acting on sections of $\mathbb {S}$, extends to a smooth family on all the source fibers of $\mathbb{T}M$.  The operator on the source fiber  $\mathbb{T}M_{(m,0)} \cong T_{m}M$ is the sum of directional differentiation in the direction $X_m$ and exterior multiplication by the linear function 
\[
T_mM \ni Y_m \longmapsto   \tfrac 12 \kappa (Y_m, X_m)\in \wedge^* T_mM,
\]
where, as in \eqref{eq-K-from-gamma}, the section  $\kappa(Y,X)$ of $\wedge^2 TM$ is related to the Riemannian curvature and the curvature of $S$ by 
\[
\gamma (R(Y,X)) = \kappa(Y,X)
\quad \text{and} \quad 
c ( q (\kappa (Y,X)) )= K(Y,X).
\]
\end{lemma}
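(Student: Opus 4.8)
The plan is to follow the proof of Lemma~\ref{lem-getzler-symbol-of-clifford-mult}, the one genuinely new feature being a curvature correction that arises because $\boldsymbol{\nabla_X}$ and $\boldsymbol{\nabla_Y}$ do not commute on $S_0(\mathbb{T}M)$. First I would extend Definition~\ref{def-nabla-map-for-S-zero} verbatim to an operator $\boldsymbol{\nabla_X}$ on $S(\mathbb{T}M)$ itself; the formula $\sum\sigma_p t^{-p}\mapsto\sum\nabla_X\sigma_p\,t^{-(p-1)}$ is legitimate since $\nabla_X$ has Getzler order at most one, so that $\nabla_X\sigma_p$ has scaling order at least $p-1$. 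A one-line computation with the evaluation formula of Proposition~\ref{prop-nonzero-fibers} then gives, for $\lambda\ne0$,
\[
\varepsilon_{(m_1,m_2,\lambda)}\bigl(\boldsymbol{\nabla_X}\sigma\bigr)=\lambda\,\nabla_{X}\bigl(\varepsilon_{(\,\cdot\,,m_2,\lambda)}(\sigma)\bigr)(m_1),
\]
the covariant derivative being taken in the first variable along the source fiber; equivalently, $\{\lambda\nabla_X\}_{\lambda\ne0}$ carries each $\widehat\sigma$ to $\widehat{\boldsymbol{\nabla_X}\sigma}$. Exactly as in the Clifford case, this, together with the Leibniz rule and Theorem~\ref{thm-families-of-ops}, shows that $\{\lambda\nabla_X\}_{\lambda\ne0}$ carries smooth sections of $\mathbb{S}$ to smooth sections; by local freeness of $\sheafS_{\mathbb{T}M}$ and density of the set where $\lambda\ne0$, it therefore extends uniquely to a smooth family of first-order operators on all source fibers.

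The substance of the lemma is the identification of this family at $\lambda=0$, that is, the identity
\[
\varepsilon_{Y_m}\bigl(\boldsymbol{\nabla_X}\sigma\bigr)=\frac{d}{ds}\Big\vert_{s=0}\varepsilon_{Y_m+sX_m}(\sigma)+\tfrac12\,\kappa(Y_m,X_m)\wedge\varepsilon_{Y_m}(\sigma)
\]
for all $X_m,Y_m\in T_mM$ and $\sigma\in S_0(\mathbb{T}M)$: the first term is directional differentiation in the direction $X_m$ of the function $Z_m\mapsto\varepsilon_{Z_m}(\sigma)$ on $T_mM$ (just as in \eqref{eq-symbol-computation}) and the second is exterior multiplication by the linear function $Y_m\mapsto\tfrac12\kappa(Y_m,X_m)$. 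Two ingredients feed into this. The first is a commutator computation: from the curvature identity $\nabla_Y\nabla_X-\nabla_X\nabla_Y=K(Y,X)+\nabla_{[Y,X]}$ one sees that $[\boldsymbol{\nabla_Y},\boldsymbol{\nabla_X}]$ sends $\sigma_p t^{-p}$ to $K(Y,X)\sigma_p\,t^{-(p-2)}+\nabla_{[Y,X]}\sigma_p\,t^{-(p-2)}$, and since $\nabla_{[Y,X]}$ has Getzler order only one the second summand equals $t\cdot\bigl(\nabla_{[Y,X]}\sigma_p\,t^{-(p-1)}\bigr)\in t\cdot S(\mathbb{T}M)$ and hence vanishes in $S_0(\mathbb{T}M)$. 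Thus, on $S_0(\mathbb{T}M)$, $[\boldsymbol{\nabla_Y},\boldsymbol{\nabla_X}]=\boldsymbol{K(Y,X)}$, where $\boldsymbol{K(Y,X)}\colon\sum\sigma_p t^{-p}\mapsto\sum K(Y,X)\sigma_p\,t^{-(p-2)}$ is well defined because $K(Y,X)=c(q(\kappa(Y,X)))$ has Getzler order at most two by \eqref{eq-K-from-gamma}. The same bookkeeping, applied to $[\nabla_Y,K(Y,X)]=\nabla_Y\bigl(K(Y,X)\bigr)$, which again has Getzler order at most two, gives $[\boldsymbol{\nabla_Y},\boldsymbol{K(Y,X)}]=0$ on $S_0(\mathbb{T}M)$, so $\boldsymbol{K(Y,X)}$ is central there; and since left Clifford multiplication by $q(\omega)$ with $\omega\in\wedge^2$ induces exterior multiplication by $\omega$ on the associated graded of the Clifford filtration, we get $\varepsilon_m\bigl(\boldsymbol{K(Y,X)}\tau\bigr)=\kappa(Y_m,X_m)\wedge\varepsilon_m(\tau)$ for $\tau\in S_0(\mathbb{T}M)$.

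The second ingredient is a Duhamel-type formula. Because $[\boldsymbol{\nabla_Y},[\boldsymbol{\nabla_Y},\boldsymbol{\nabla_X}]]=[\boldsymbol{\nabla_Y},\boldsymbol{K(Y,X)}]=0$ and $\boldsymbol{\nabla_Y}$ is locally nilpotent, one has both
\[
\exp(\boldsymbol{\nabla_Y})\,\boldsymbol{\nabla_X}=\bigl(\boldsymbol{\nabla_X}+\boldsymbol{K(Y,X)}\bigr)\exp(\boldsymbol{\nabla_Y})
\quad\text{and}\quad
\frac{d}{ds}\Big\vert_{s=0}\exp\bigl(\boldsymbol{\nabla_Y}+s\boldsymbol{\nabla_X}\bigr)=\bigl(\boldsymbol{\nabla_X}+\tfrac12\boldsymbol{K(Y,X)}\bigr)\exp(\boldsymbol{\nabla_Y}),
\]
the second because $\int_0^1 e^{u\boldsymbol{\nabla_Y}}\,\boldsymbol{\nabla_X}\,e^{-u\boldsymbol{\nabla_Y}}\,du=\int_0^1\bigl(\boldsymbol{\nabla_X}+u\boldsymbol{K(Y,X)}\bigr)\,du$. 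Applying $\varepsilon_m$ to both, using $\varepsilon_{Y_m}=\varepsilon_m\circ\exp(\boldsymbol{\nabla_Y})$ and $\varepsilon_{Y_m+sX_m}=\varepsilon_m\circ\exp(\boldsymbol{\nabla_Y}+s\boldsymbol{\nabla_X})$ (legitimate since $Y+sX$ is a vector field extending $Y_m+sX_m$ and $\nabla$ is $\R$-linear in its subscript) together with the formula for $\varepsilon_m\circ\boldsymbol{K(Y,X)}$, and subtracting, yields the displayed identity. I expect the main obstacle to be precisely the coefficient $\tfrac12$: the naive computation $\varepsilon_{Y_m}(\boldsymbol{\nabla_X}\sigma)=\varepsilon_m\bigl(\exp(\boldsymbol{\nabla_Y})\boldsymbol{\nabla_X}\sigma\bigr)$ produces the curvature term with coefficient $1$, and one recovers $\tfrac12$ only upon recognizing that the genuine directional-derivative part of the model operator already absorbs the other half---this is exactly the point where Getzler's factor of $\tfrac12$, and ultimately the Mehler formula, enters. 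Some care is also needed to verify that $\boldsymbol{K(Y,X)}$ really descends to $S_0(\mathbb{T}M)$ and is central there, which rests entirely on the Getzler-order estimates for $K(Y,X)$ and for $\nabla_Y\bigl(K(Y,X)\bigr)$.
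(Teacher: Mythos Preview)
Your proposal is correct and follows essentially the same route as the paper. Both arguments rest on the Heisenberg-type relation $[\boldsymbol{\nabla_Y},\boldsymbol{\nabla_X}]=\boldsymbol{K(Y,X)}$ with $\boldsymbol{K(Y,X)}$ commuting with $\boldsymbol{\nabla_Y}$ on $S_0(\mathbb{T}M)$; the paper packages this via the Baker--Campbell--Hausdorff identity $\exp(\boldsymbol{\nabla_Y})\exp(t\boldsymbol{\nabla_X})=\exp(\boldsymbol{\nabla_{Y+tX}})\exp(\tfrac{t}{2}\boldsymbol{K(Y,X)})$ (its Lemma~\ref{lem-heisenberg}) and differentiates at $t=0$, whereas you reach the same endpoint by computing $\exp(\boldsymbol{\nabla_Y})\boldsymbol{\nabla_X}$ and $\frac{d}{ds}\big|_{s=0}\exp(\boldsymbol{\nabla_Y}+s\boldsymbol{\nabla_X})$ separately via the adjoint and Duhamel formulas and subtracting---a cosmetic reorganization of the same algebra.
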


The proof will use  following simple algebraic fact: 
\begin{lemma}
\label{lem-heisenberg}
If $A$, $B$ and $[A,B]$ are locally nilpotent linear operators on a rational vector space, and if $[A,B]$ commutes with both $A$ and $B$, then $A+B$ is locally nilpotent and 
\[
\pushQED{\qed} 
\exp(A)\exp(B) = \exp(  \tfrac 12 [A,B]) \exp(A+B)  . 
\qedhere
\popQED
\]
\end{lemma}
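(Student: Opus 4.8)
The plan is to prove Lemma~\ref{lem-heisenberg} by the Baker--Campbell--Hausdorff mechanism, adapted to the purely algebraic, locally nilpotent setting so that all exponential series terminate when applied to a vector and no analysis is required.

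The first task is to show that $A+B$ is locally nilpotent, which is the one point that genuinely uses the centrality hypothesis, since a sum of locally nilpotent operators need not be locally nilpotent. Write $C=[A,B]$; by hypothesis $C$ is locally nilpotent and commutes with $A$ and $B$, hence with $A+B$. Filter $V$ by the subspaces $V_k=\ker(C^k)$; because $C$ is locally nilpotent this exhausts $V$, the $V_k$ are preserved by $A$ and $B$, and $C$ carries $V_k$ into $V_{k-1}$. On the associated graded space $C$ therefore acts by zero, so the induced operators $\bar A,\bar B$ commute and remain locally nilpotent, whence $\bar A+\bar B$ is locally nilpotent by the elementary binomial estimate valid for commuting operators. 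A short induction on the least $k$ with $v\in V_k$ promotes this to local nilpotence of $A+B$ on all of $V$. In particular $\exp(A)$, $\exp(B)$, $\exp(\tfrac12 C)$ and $\exp(A+B)$ are all well-defined term by term.

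For the identity itself I would introduce a formal parameter $s$ and, for a fixed $v\in V$, form the two $V$-valued polynomials $P(s)=\exp(sA)\exp(sB)v$ and $Q(s)=\exp(\tfrac{s^2}{2}C)\exp(s(A+B))v$; both are genuine polynomials in $s$ because the series involved terminate. Differentiating $P$ formally and inserting the conjugation identity $\exp(sA)\,B\,\exp(-sA)=B+sC$ --- valid because $\operatorname{ad}(A)^2B=[A,C]=0$ --- gives $P'(s)=(A+B+sC)P(s)$; differentiating $Q$ and using that $C$ commutes with $A+B$ gives the same equation $Q'(s)=(A+B+sC)Q(s)$, and $P(0)=Q(0)=v$. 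Writing the common form $R(s)=\sum_n w_n s^n$, the equation forces the recursion $(n+1)w_{n+1}=(A+B)w_n+C w_{n-1}$ with $w_0=v$; this is where the rationality of $V$ enters, making each $n+1$ invertible and so determining all the $w_n$ uniquely. Hence $P=Q$, and evaluating at $s=1$ yields the lemma, $v$ being arbitrary.

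The main obstacle is really the local nilpotence of $A+B$: absent any finite-dimensionality there is nothing for free here, and the centrality of $[A,B]$ must be used. A convenient alternative that packages the same idea is to first observe that the subalgebra generated by $A$ and $B$ already sends each $v$ into a finite-dimensional invariant subspace --- the monomials $A^iC^kB^jv$ are eventually zero since $A$, $B$ and $C$ are locally nilpotent and $C$ is central --- thereby reducing the entire lemma to the finite-dimensional nilpotent case; the remaining steps are then routine, as long as one keeps every manipulation formal over $\mathbb{Q}[s]$ rather than analytic.
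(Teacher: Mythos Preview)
Your argument is correct. The paper itself gives no proof of this lemma: the \qed is placed at the end of the statement, signalling that the authors regard it as a standard algebraic fact and omit the verification entirely. So there is nothing to compare against.

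Your treatment of the local nilpotence of $A+B$ is the substantive part, and both approaches you outline are sound. The filtration by $V_k=\ker C^k$ reduces to the commuting case on each graded piece, and there the finite-dimensionality of $\operatorname{span}\{A^iB^jv\}$ (which follows since $A^iB^jv=B^jA^iv$ vanishes once $i$ or $j$ exceeds the nilpotence index of $v$ for $A$ or $B$ respectively) gives what you need. The alternative you sketch at the end---observing directly that the monomials $A^iC^kB^jv$ span a finite-dimensional $A,B$-invariant subspace---is in fact slightly cleaner, since it handles the local nilpotence and the well-definedness of all four exponentials in one stroke.

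The ODE argument for the identity is also correct. One small point worth making explicit: the conjugation formula $\exp(sA)\,B\,\exp(-sA)=B+sC$ is itself an identity of operators applied to a fixed vector (or, equivalently, an identity inside the finite-dimensional invariant subspace just constructed), so it is legitimate in the locally nilpotent setting without any convergence issues. With that understood, your derivation of $P'(s)=(A+B+sC)P(s)=Q'(s)$ and the uniqueness via the recursion over $\mathbb{Q}$ is clean and complete.
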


\begin{proof}[Proof of Lemma~\ref{lem-getzler-symbol-of-nabla}]
It follows from the definition of the curvature operator that 
\[
\left [{\nabla_Y}, {\nabla_X}\right ]  -{\nabla_{[Y,X]} } =  {K(Y,X) }
\]
as operators on smooth sections of $S{\boxtimes} S^*$ over $M{\times}M$.
So if we define 
\[
\boldsymbol{K (Y,X)} \colon 
S_0(\mathbb{T}M) \longrightarrow S_0(\mathbb{T}M)
\]
by
\[
\sum \sigma_p t^{-p} \longmapsto \sum K (Y,X) \sigma_p t^{-(p-2)} ,
\]
then, since $\nabla_{[Y,X]}$ has Getzler order one, not two, we obtain
\begin{equation*} 
\left [ \boldsymbol{\nabla_Y}, \boldsymbol{\nabla_X}\right ]  =  \boldsymbol{K (Y,X)} \colon S_0(\mathbb{T}M) \longrightarrow S_0(\mathbb{T}M) .
\end{equation*}
Moreover, as in the proof of Lemma~\ref{lem-getzler-symbol-of-clifford-mult}, each of $\boldsymbol{\nabla_Y}$ and $\boldsymbol{\nabla_X}$ commutes with $ \boldsymbol{K (Y,X)}$, and so by Lemma~\ref{lem-heisenberg}, 
\[
 \exp \bigl ( \boldsymbol{\nabla_{Y}} \bigr ) \exp \bigl ( \boldsymbol{\nabla_{X}} \bigr )
=
\exp\bigl ( \boldsymbol{\nabla_{Y+X}} \bigr )\exp\bigl ( \tfrac 12 \boldsymbol{K(Y,X)}\bigr ) .
\]
We can now compute that 
\[
\begin{aligned}
\varepsilon _{Y} \bigl (  \boldsymbol{\nabla_X} s \bigr ) 
	& = \frac{d}{dt}\Big \vert _{t =0} \varepsilon _{Y} \bigl (  \exp( \boldsymbol{\nabla_{tX}} )s \bigr ) \\
	& = \frac{d}{dt}\Big \vert _{t =0} \varepsilon_0\bigl ( \exp( \boldsymbol{\nabla_{Y}} )\exp( \boldsymbol{\nabla_{tX}} )s \bigr ) \\
& =  \frac{d}{dt}\Big \vert _{t =0}\varepsilon_0\bigl ( \exp( \boldsymbol{\nabla_{Y+tX}} )\exp( \tfrac 12 \boldsymbol {K(Y,tX)})s \bigr ) \\
& =  \frac{d}{dt}\Big \vert _{t =0}\varepsilon_{Y+tX}\bigl ( \exp( \tfrac 12 \boldsymbol {K(Y,tX)})s \bigr ) \\
& =  \frac{d}{dt}\Big \vert _{t =0}\varepsilon_{Y+tX}\bigl (s \bigr ) +  
\tfrac 12 \kappa (Y_m,X_m) \wedge \varepsilon_{Y} (s )  .\\
\end{aligned}
\]
In the last line we used the Leibniz rule and Lemma~\ref{lem-getzler-symbol-of-clifford-mult}.  We have now computed the action of the family $\{ \lambda \nabla_X\}$ in the statement of  the lemma  on ``algebraic'' sections  of $\mathbb{S}$ (associated to elements of $S(\mathbb{T}M)$).  The lemma follows from this.
\end{proof}

\subsection{Tangent Vectors Versus Normal Vectors}

So far, when discussing the tangent groupoid we have been identifying $TM$ with the normal bundle for the diagonal  in  $M{\times}M$  by associating to a tangent vector $X_m$ at $m\in M$ the  tangent vector $(X_m,0_m)$ at the diagonal point $(m,m)\in M{\times}M$.  In this subsection we shall examine the effect of doing otherwise.

Let $X$ be a vector field on $M$. Instead of writing $\nabla_X$ for the covariant derivative on $M{\times}M$ associated to the action on the left copy of $M$, let us temporarily write $\nabla_{(X,0)}$.  Let us similarly write $c(X,0)$ for left Clifford multiplication.  There are also obvious right operators $\nabla_{(0,X)}$ and $c(0,X)$, and let us begin by noting that all the right operators commute with the all the left operators.  

It follows from this commutativity that each right operator decreases the scaling order of a section $\sigma$ of $S{\boxtimes}S^*$   by at most one.  Consider for example the covariant derivative $\nabla_{(0,X)}$. If the scaling order of $\sigma$ is at least $p$, and if  $D$ is a differential operator of Getlzer order $q$ or less on $M$, acting on the left factor of $M{\times}M$, then we need to show that 
\[
\operatorname{Clifford-order}(D \nabla_{(0,X)} \sigma)
\le q - p + 1 .
\]
Write $\tau = D\sigma$, which is a section of scaling order $p{-}q$ or more. 
Since 
$D \nabla_{(0,X)} \sigma = \nabla_{(0,X)} D\sigma$,  we need to  show that 
\[
\operatorname{Clifford-order}(\nabla_{(0,X)} \tau)
\le q - p + 1 .
\]
 Next write 
\[
\nabla_{(0,X)} = \nabla_{(X,X)} - \nabla_{(X,0)} .
\]
The operator $ \nabla_{(X,X)} $ preserves Clifford order since along the diagonal of $M{\times}M$ the Riemannian connection is the standard connection on $\Cliff (TM)$, while of course $\nabla_{(X,0)}$ increases the Clifford order of $\tau$ by at most one, by definition of the scaling filtration.  The proof is complete. The proof for Clifford multiplications is similar, but simpler since the last step above is not needed.

It follows from these computations that we can define the scaling order using either left operators, or the right operators, or both. 

Now let  $X$ and $Y$ be vector fields on $M$.  Since  $\nabla_{(X,Y)}$ decreases scaling order by at most one, there is an induced, degree minus one  operator 
\[
\boldsymbol{\nabla_{(X,Y)}}
\colon S_0 (\mathbb{T}M) \longrightarrow S_0 (\mathbb{T}M)
\]
given by the now-usual formula 
\[
\sum \sigma_p t^{-p} \longmapsto \sum \nabla _{(X,Y)} \sigma_p t^{-(p-1)}
\]
on $S(\mathbb{T}M)$. Define the evaluation morphism 
\[
\varepsilon_{(X_m,Y_m)}
\colon S_0(\mathbb{T}M) \longrightarrow \wedge^* T_mM
\]
via the commuting diagram 
\[
\xymatrix@C=50pt{
S_0(\mathbb{T}M) \ar[r]^{\varepsilon_{(X_m,Y_m)}} 
\ar[d]_{\exp(\boldsymbol{\nabla_{(X,Y)}})} & \wedge^* T_mM 
\ar@{=}[d]
\\
S_0(\mathbb{T}M) \ar[r]_{\varepsilon_{m} }& \wedge^* T_mM .
}
\]
In the context of  $A_0(\mathbb{T}M)$, the map $\varepsilon_{(X_m,Y_m)}$   only depends on the normal vector determined by $(X_m,Y_m)$, but the following computation shows that this is not the case for  $S_0(\mathbb{T}M)$. Write $\kappa(X,Y) = \gamma (R(X,Y))$, as in Lemma~\ref{lem-getzler-symbol-of-nabla}.  View $\kappa(X,Y)$ as an operator on the exterior algebra bundle by exterior multiplication.

\begin{proposition}
\label{prop-normal-versus-tangent}
The diagram
\[
\xymatrix@C=50pt{
S_0(\mathbb{T}M) \ar[r]^{\varepsilon_{(X_m,Y_m)}} \ar@{=}[d] _{\phantom{xxxxxxxxxxxxx}}& \wedge^* T_mM 
\ar[d]^{ \exp( \frac 12 \kappa (X_m,Y_m))}
\\
S_0(\mathbb{T}M) \ar[r]_{\varepsilon_{(X_m-Y_m,0)}} & \wedge^* T_mM
}
\]
is commutative.
\end{proposition}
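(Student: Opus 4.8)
The plan is to realize both $\varepsilon_{(X_m,Y_m)}$ and $\varepsilon_{(X_m-Y_m,0)}$ as $\varepsilon_m$ precomposed with the exponential of a locally nilpotent operator on $S_0(\mathbb{T}M)$, and then to move the curvature factor out of the way with Lemma~\ref{lem-heisenberg}. By definition $\varepsilon_{(X_m,Y_m)} = \varepsilon_m\circ\exp(\boldsymbol{\nabla_{(X,Y)}})$, while $\varepsilon_{(X_m-Y_m,0)} = \varepsilon_m\circ\exp(\boldsymbol{\nabla_{(X-Y,0)}})$ is the analogous map built from the purely-left covariant derivative; all of these maps depend only on the tangent vectors $X_m,Y_m$ and not on the chosen extending vector fields (Lemma~\ref{lem-compatibility-with-module-action} and the corresponding statement for two-sided derivatives), so I may fix vector fields $X,Y$ once and for all. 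The starting point is the decomposition of vector fields $(X,Y) = (X{-}Y,0) + (Y,Y)$ on $M{\times}M$, which gives $\boldsymbol{\nabla_{(X,Y)}} = \boldsymbol{\nabla_{(X-Y,0)}} + \boldsymbol{\nabla_{(Y,Y)}}$ on $S_0(\mathbb{T}M)$.

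Next I would record two algebraic facts on $S_0(\mathbb{T}M)$. First, $[\boldsymbol{\nabla_{(X-Y,0)}}, \boldsymbol{\nabla_{(Y,Y)}}] = \boldsymbol{K(X,Y)}$: since left operators commute with right ones, this bracket equals $[\boldsymbol{\nabla_{(X-Y,0)}}, \boldsymbol{\nabla_{(Y,0)}}]$, and exactly as in the proof of Lemma~\ref{lem-getzler-symbol-of-nabla} the bracket of two left covariant derivatives is the corresponding $\boldsymbol{K}$ (the $\nabla_{[\,\cdot,\cdot]}$-term equals $t$ times an element of $S(\mathbb{T}M)$, hence vanishes in $S_0(\mathbb{T}M)$), while $K(X{-}Y,Y) = K(X,Y)$. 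Second, $\boldsymbol{K(X,Y)}$ is central: for any vector field $Z$ the operator $[\nabla_Z, c(q(\kappa(X,Y)))] = c(q(\nabla_Z^{\mathrm{LC}}\kappa(X,Y)))$ is again Clifford multiplication by an order-$2$ section, so $[\boldsymbol{\nabla_{(Z,0)}}, \boldsymbol{K(X,Y)}]$ — and likewise $[\boldsymbol{\nabla_{(0,Z)}}, \boldsymbol{K(X,Y)}]$ — lands one scaling-filtration step too low to survive in $S_0(\mathbb{T}M)$. Consequently Lemma~\ref{lem-heisenberg} applies to $A = \boldsymbol{\nabla_{(X-Y,0)}}$ and $B = \boldsymbol{\nabla_{(Y,Y)}}$ and yields
\[
\exp(\boldsymbol{\nabla_{(X,Y)}}) = \exp\bigl(\boldsymbol{\nabla_{(X-Y,0)}} + \boldsymbol{\nabla_{(Y,Y)}}\bigr) = \exp\bigl(\tfrac12\boldsymbol{K(X,Y)}\bigr)\,\exp(\boldsymbol{\nabla_{(Y,Y)}})\,\exp(\boldsymbol{\nabla_{(X-Y,0)}}).
\]

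The crucial new input — and the step I expect to be the main obstacle — is that the ``diagonal'' operator is invisible to $\varepsilon_m$:
\[
\varepsilon_m\circ\boldsymbol{\nabla_{(Y,Y)}} = 0, \qquad\text{hence}\qquad \varepsilon_m\circ\exp(\boldsymbol{\nabla_{(Y,Y)}}) = \varepsilon_m .
\]
The point is that $(Y,Y)$ is tangent to the diagonal, so $\nabla_{(Y,Y)}$ commutes with restriction to the diagonal and there restricts to the standard Clifford connection on $\clifford(TM)$, which preserves the Clifford filtration. Unwinding Definition~\ref{def-eval-map-for-S-zero}: if $\sigma_p$ has scaling order $\ge p$ then $\sigma_p(m,m)\in\clifford_{-p}(T_mM)$, whence $(\nabla_{(Y,Y)}\sigma_p)(m,m)$, being $\nabla^{\mathrm{Cliff}}_Y$ applied to the restriction of $\sigma_p$ to the diagonal, still lies in $\clifford_{-p}(T_mM)$; but the contribution of this term to $\varepsilon_m(\boldsymbol{\nabla_{(Y,Y)}}\sigma)$ is its class in $\clifford_{-(p-1)}(T_mM)/\clifford_{-p}(T_mM)$, which is therefore zero.

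Putting the pieces together, I apply $\varepsilon_m$ to the displayed factorization, commute the central factor $\exp(\tfrac12\boldsymbol{K(X,Y)})$ past everything, use $\varepsilon_m\circ\exp(\tfrac12\boldsymbol{K(X,Y)}) = \exp(\tfrac12\kappa(X_m,Y_m))\wedge\varepsilon_m$ — which holds because $K(X,Y) = c\circ q\circ\gamma(R(X,Y))$ and left Clifford multiplication by a $2$-vector agrees with exterior multiplication by it modulo lower Clifford order, while $\varepsilon_m$ reads off the top Clifford component (the identity already invoked inside the proof of Lemma~\ref{lem-getzler-symbol-of-nabla}) — then use $\varepsilon_m\circ\exp(\boldsymbol{\nabla_{(Y,Y)}}) = \varepsilon_m$ and finally $\varepsilon_m\circ\exp(\boldsymbol{\nabla_{(X-Y,0)}}) = \varepsilon_{(X_m-Y_m,0)}$. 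This relates $\varepsilon_{(X_m,Y_m)}$ and $\varepsilon_{(X_m-Y_m,0)}$ by the factor $\exp(\tfrac12\kappa(X_m,Y_m))$, i.e. gives the commutativity of the diagram. The Heisenberg-lemma step is then purely formal; the real content is the vanishing $\varepsilon_m\circ\boldsymbol{\nabla_{(Y,Y)}}=0$ together with the centrality of $\boldsymbol{K(X,Y)}$, both of which rest on the fact that the diagonal Riemannian connection respects the Clifford filtration — and, as a secondary nuisance, the orientation/sign bookkeeping for $\kappa$, which must be carried through consistently with \eqref{eq-K-from-gamma} and the conventions of Lemma~\ref{lem-getzler-symbol-of-nabla}.
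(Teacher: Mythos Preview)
Your argument is correct, and it reaches the same conclusion via a genuinely different (though closely related) decomposition. The paper splits $(X,Y)=(X,0)+(0,Y)$, first proving the special case $\varepsilon_{(0_m,Y_m)}=\varepsilon_{(-Y_m,0_m)}$ by writing $\nabla_{(0,Y)}^n-\nabla_{(-Y,0)}^n$ as $\nabla_{(Y,Y)}$ times a sum and invoking that $\nabla_{(Y,Y)}$ preserves Clifford order; it then applies Lemma~\ref{lem-heisenberg} to the two \emph{left} operators $\boldsymbol{\nabla_{(-Y,0)}}$ and $\boldsymbol{\nabla_{(X,0)}}$. You instead split $(X,Y)=(X{-}Y,0)+(Y,Y)$ and apply Lemma~\ref{lem-heisenberg} to $\boldsymbol{\nabla_{(X-Y,0)}}$ and $\boldsymbol{\nabla_{(Y,Y)}}$, using directly that $\varepsilon_m\circ\boldsymbol{\nabla_{(Y,Y)}}=0$. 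The underlying geometric input is identical---the diagonal connection respects the Clifford filtration---but your route is a bit more economical for this proposition taken in isolation, since it extracts the needed vanishing statement immediately rather than via the comparison of $\boldsymbol{\nabla_{(0,Y)}}^n$ with $\boldsymbol{\nabla_{(-Y,0)}}^n$. One small trade-off: the paper's intermediate identity \eqref{eq-normal-versus-tangent-special-case} is reused later in the proof of Theorem~\ref{thm-multiplicative-structure-on-S} (specifically in establishing \eqref{eq-composition4}), so if you adopt your argument in place of the paper's you would still want to record that identity separately---though it follows easily from your key lemma, since $\boldsymbol{\nabla_{(0,Y)}}=\boldsymbol{\nabla_{(-Y,0)}}+\boldsymbol{\nabla_{(Y,Y)}}$ with $[\boldsymbol{\nabla_{(-Y,0)}},\boldsymbol{\nabla_{(Y,Y)}}]=\boldsymbol{K(-Y,Y)}=0$.
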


\begin{proof}
Let us first prove the following  special case: 
if $Y$ is any vector field on $M$, then 
\begin{equation}
\label{eq-normal-versus-tangent-special-case}
\varepsilon_{(0_m,Y_m)} = \varepsilon_{(-Y_m,  0_m)}\colon S_0(\mathbb{T}M) \longrightarrow \wedge^* T_mM .
\end{equation}
To do so, use the fact that ${\nabla_{(0,Y)}}$ and ${\nabla_{(-Y,0)}}$ commute as operators on smooth sections of $S{\boxtimes}S^*$ to write  
\[
\nabla_{(0,Y)}^n - \nabla_{(-Y,0)}^n 
=
 {\nabla_{(Y,Y)}}
 \Bigl ( 
\nabla_{(0,Y)}^{n-1} 
+  \nabla_{(0,Y)}^{n-2} \nabla_{(-Y,0)} 
+  \cdots 
+  \nabla_{(0,Y)}   \nabla_{(-Y,0)} ^{n-2}
+  \nabla_{(-Y,0)}^{n-1}
\Bigr ) .
\]
The operator $\nabla_{(Y,Y)}$ does not increase the Clifford order of sections, so if $\sigma $ is a section of scaling order $p$, then the section 
\[
\nabla_{(0,Y)}^n\sigma - \nabla_{(-Y,0)}^n\sigma
\]
has Clifford order $p{+}n{-}1$ or less. It follows now from the definitions that 
\[
\varepsilon_m \bigl (\boldsymbol{\nabla_{(0,Y)}}\sigma \bigr ) = \varepsilon _m \bigl (\boldsymbol{\nabla_{(-Y,0)}}\sigma \bigr),
\]
and \eqref{eq-normal-versus-tangent-special-case}  follows.

For the general case, it follows from Lemma~\ref{lem-heisenberg}   that 
\begin{equation} 
\label{eq-covariant-commutator}
 \exp (\boldsymbol{\nabla_{(-Y,0)}})
 \exp (\boldsymbol{\nabla_{(X,0)}})  =  \exp (\tfrac 12 \boldsymbol{K (X,Y)})  \exp (\boldsymbol{\nabla_{(X-Y,0)}})
\end{equation}
as operators on $S_0(\mathbb{T}M)$. Therefore
\[
\begin{aligned}
\varepsilon_{(X_m,Y_m)} ( \sigma ) 
	& = 
\varepsilon_m \bigl ( \exp (\boldsymbol{\nabla_{(X,Y)}})  \sigma \bigr ) \\
	& = 
\varepsilon_m \bigl ( \exp (\boldsymbol{\nabla_{(0,Y)}})
\exp (\boldsymbol{\nabla_{(X,0)}})  \sigma \bigr ) 
\\
& = 
\varepsilon_m \bigl ( \exp (\boldsymbol{\nabla_{(-Y,0)}})
\exp (\boldsymbol{\nabla_{(X,0)}})  \sigma \bigr ) 
\\
& = 
\varepsilon_m\bigl ( \exp (\tfrac 12 \boldsymbol{K (X,Y)})  \exp (\boldsymbol{\nabla_{(X-Y,0)}})\sigma \bigr ) \\
& = 
\exp (\tfrac 12 {\kappa (X_m,Y_m)})
\varepsilon_{X_m-Y_m}(\sigma ) 
\end{aligned}
\]
as required.
\end{proof}

\section{Multiplicative Structure}
\label{sec-multiplicative}

In this section   we shall equip the space $C_c^\infty (\mathbb{T}M, \mathbb{S})$  of smooth, compactly supported sections of the rescaled spinor bundle over $\mathbb{T}M$ with a convolution product. 

\subsection{Convolution Algebras of Smooth Groupoids}

We being by reviewing  some basic facts about the convolution algebras of smooth groupoids \cite[Section 2.5]{Connes94}. Let $s,t\colon \mathbb{G} \rightrightarrows M$ be a Lie groupoid. In order to build a convolution algebra of functions on $\mathbb{G}$ we shall fix a suitable family of measures on the target fibers of $\mathbb{G}$ (an alternative approach uses half-densities, but in our tangent groupoid  example  the family of measures has an extremely simple form). 

\begin{definition}[Compare {\cite[Section 1.2]{Renault80}}]
A \emph{smooth left Haar system} on $\mathbb{G}$  is a family   of smooth measures $\mu^m$  on the target fibers 
\[
\mathbb{G}^m = \{ \, \gamma \in \mathbb {G} : t(\gamma) = m\,\}
\]
of  $\mathbb{G}$   having the following two properties:
\begin{enumerate}[\rm (i)]
\item For any compactly supported smooth function $f$ on $\mathbb{G}$, the assignment
$$
m \mapsto \int_{\mathbb{G}^m}  f(\gamma) \, d\mu^m (\gamma) 
$$
defines a smooth function on $M$.
\item For any morphism $\gamma_1: m\to p$ and   any compactly supported smooth function $f$ on $\mathbb{G}$ we have
$$
\int_{\mathbb{G}^m}  f(\gamma_1  \circ \gamma ) \, d\mu^m(\gamma) = \int_{\mathbb{G}^p} f(\gamma) \, d\mu^p(\gamma)
$$
\end{enumerate}
\end{definition}

Given a smooth left Haar system on $\mathbb{G}$, the formula
\[
f_1\star f_2 (\eta) = \int _{\mathbb{G}^{t (\eta)}} f_1 (\gamma) f_2 (\gamma^{-1}\circ \eta)\, d\mu^{t (\eta)} (\gamma) ,
\]
 defines an associative product on $C_c^\infty (\mathbb{G})$.  This is the convolution algebra of the Lie groupoid $\mathbb{G}$.

\subsection{Multiplicative Structures on Bundles Over Groupoids}
\label{subsec-multiplicative1}
Let $s,t\colon \mathbb{G} \rightrightarrows M$ be  a Lie groupoid, once again. Form the space of composable pairs 
\[
\mathbb{G}^{(2)} = \{\, (\gamma,\eta) \in  \mathbb{G}{\times}\mathbb{G} : s(\gamma) = t(\eta)\, \} ,
\]
and denote by 
\[
c \colon  \mathbb{G}^{(2)} \longrightarrow \mathbb{G}
\quad \text{and} \quad 
p_1,p_2\colon \mathbb{G}^{(2)} \longrightarrow \mathbb{G}
\]
the composition map $c (\gamma, \eta) = \gamma \circ \eta$  and the two coordinate projections.

\begin{definition}
Let $\mathbb{V}$ be a smooth vector bundle over  $\mathbb{G}$. A \emph{multiplicative structure} on $\mathbb{V}$ is a morphism of vector bundles
\[
p_{1}^* \, \mathbb{V} \otimes p_{2}^* \, \mathbb{V} \stackrel \circ \longrightarrow c^* \, \mathbb{V} ,
\]
or in other words a smoothly varying family of vector space morphisms
\[
\mathbb{V}_\eta \otimes \mathbb{V}_\gamma  \stackrel \circ\longrightarrow \mathbb{V} _{\eta\circ \gamma} ,
\]
that is associative in the natural sense that 
\begin{equation*}
v_\alpha  \in \mathbb{V}_\alpha, \quad v_\beta  \in \mathbb{V}_\beta, \quad v_\gamma  \in \mathbb{V}_\gamma 
\quad \Rightarrow \quad 
 v_\alpha \circ (v_\beta \circ v_\gamma) 
 =  (v_\alpha \circ v_\beta) \circ v_\gamma \in \mathbb{V}_{\alpha\circ \beta \circ \gamma}
\end{equation*}
for all composable $\alpha$, $\beta$ and $\gamma$.
\end{definition}

\begin{example}
\label{ex-standard-structure}
If $ \mathbb{G} \rightrightarrows M$ is any smooth groupoid, and if $V$ is a vector bundle on $M$ then the bundle on $\mathbb{G}$ with fibers
\[
\mathbb{V}_\gamma  =    V_{t(\gamma)}^{\phantom{*}}\otimes  V_{s(\gamma)}^{*} = \operatorname{Hom} \bigl (V_{s(\gamma)},V_{t(\gamma)}\bigr )
\]
 has an obvious multiplicative structure given by contraction/composition  that we shall call the \emph{standard multiplicative structure}.
\end{example}

\begin{lemma}
\label{lem-muliplicative-convolution}
Let $\mathbb{G}$ be a Lie groupoid equipped with a smooth left Haar system.  If $\mathbb{V}$ is a vector bundle on $\mathbb{G}$ with multiplicative structure, then the formula 
\[
f_1\star f_2 (\eta) = \int _{\mathbb{G}^{t (\eta)}} f_1 (\gamma)\circ  f_2 (\gamma^{-1}\circ \eta)\, d\mu^{t (\eta)} (\gamma) ,
\]
defines an associative product on the   smooth, compactly supported sections of $\mathbb{V}$. \qed
\end{lemma}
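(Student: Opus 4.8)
The plan is to follow Connes' treatment of the scalar case (\cite[Section~2.5]{Connes94}), inserting the bundle multiplication $\circ$ in place of scalar multiplication, and to split the work into three parts: that the integrand makes sense, that $f_1\star f_2$ is again a smooth compactly supported section, and that the product is associative. For the first point, observe that when $\gamma\in\mathbb{G}^{t(\eta)}$ the elements $\gamma$ and $\gamma^{-1}\circ\eta$ are composable with $\gamma\circ(\gamma^{-1}\circ\eta)=\eta$, so the multiplicative structure maps $f_1(\gamma)\otimes f_2(\gamma^{-1}\circ\eta)\in\mathbb{V}_\gamma\otimes\mathbb{V}_{\gamma^{-1}\circ\eta}$ into $\mathbb{V}_\eta$; thus $\gamma\mapsto f_1(\gamma)\circ f_2(\gamma^{-1}\circ\eta)$ is a section over $\mathbb{G}^{t(\eta)}$ of the fibre $\mathbb{V}_\eta$, supported in the compact set $\operatorname{supp}(f_1)\cap\mathbb{G}^{t(\eta)}$, so the integral against $\mu^{t(\eta)}$ converges and lands in $\mathbb{V}_\eta$. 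Smoothness of $\eta\mapsto (f_1\star f_2)(\eta)$ follows from property~(i) of the Haar system applied in local trivialisations of $\mathbb{V}$, in which the $\mathbb{V}$-valued integrand becomes a finite sum of scalar integrands times frame sections; and $\operatorname{supp}(f_1\star f_2)$ is contained in the image under the composition map of $\{(\gamma_1,\gamma_2):\gamma_i\in\operatorname{supp}(f_i)\}\cap\mathbb{G}^{(2)}$, which is compact because $\mathbb{G}$ is Hausdorff. Hence $f_1\star f_2\in C_c^\infty(\mathbb{G},\mathbb{V})$ and the construction can be iterated.

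For associativity I would expand $((f_1\star f_2)\star f_3)(\eta)$ as a double integral over $\mathbb{G}^{t(\eta)}\times\mathbb{G}^{t(\eta)}$ — using $t(\gamma)=t(\eta)$ to see that the inner target fibre is again $\mathbb{G}^{t(\eta)}$ — move the linear maps $w\mapsto w\circ f_3(\gamma^{-1}\circ\eta)$ through the inner integral, interchange the order of integration by Fubini (legitimate since the integrands have compact support), and rewrite $\bigl(f_1(\gamma')\circ f_2(\gamma'^{-1}\circ\gamma)\bigr)\circ f_3(\gamma^{-1}\circ\eta)$ as $f_1(\gamma')\circ\bigl(f_2(\gamma'^{-1}\circ\gamma)\circ f_3(\gamma^{-1}\circ\eta)\bigr)$ using the associativity axiom of the multiplicative structure. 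Then, holding $\gamma'$ fixed, I would substitute $\gamma=\gamma'\circ\delta$ with $\delta\in\mathbb{G}^{s(\gamma')}$, so that $\gamma'^{-1}\circ\gamma=\delta$ and $\gamma^{-1}\circ\eta=\delta^{-1}\circ(\gamma'^{-1}\circ\eta)$, and apply the left-invariance property~(ii) (with $\gamma_1=\gamma'$, a morphism from $s(\gamma')$ to $t(\eta)$) to replace the $\gamma$-integral over $\mathbb{G}^{t(\eta)}$ by a $\delta$-integral over $\mathbb{G}^{s(\gamma')}$. Since $v\mapsto f_1(\gamma')\circ v$ is linear on the fibre for fixed $\gamma'$, it passes outside the $\delta$-integral, and because $t(\gamma'^{-1}\circ\eta)=s(\gamma')$ the remaining integral is exactly the one defining $(f_2\star f_3)(\gamma'^{-1}\circ\eta)$; integrating over $\gamma'\in\mathbb{G}^{t(\eta)}$ then yields $(f_1\star(f_2\star f_3))(\eta)$. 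Bilinearity over $\C$ is immediate from linearity of the maps $\circ$ and of the integral.

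I expect the only point requiring genuine care to be the bookkeeping in the associativity computation: one must keep the source and target base points of every groupoid element aligned so that each written composition is actually composable, check that each fibre identification ($\mathbb{V}_{\gamma\circ(\gamma^{-1}\circ\eta)}=\mathbb{V}_\eta$, and so on) is the intended one, and arrange the change of variables so that the hypothesis of property~(ii) literally applies. This is a bookkeeping rather than an analytic obstacle; everything else is routine, and the argument differs from Connes' scalar case only by the presence of the associative bundle multiplication.
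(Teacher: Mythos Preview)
Your proposal is correct and follows the standard argument; note that the paper itself gives no proof at all (the lemma ends with \qed), treating the result as a routine extension of the scalar convolution construction recalled just before. Your write-up supplies exactly the details one would expect---well-definedness of the fibrewise integrand, smoothness and compact support via local trivialisations and property~(i), and associativity via Fubini, the associativity axiom for~$\circ$, and the change of variables $\gamma=\gamma'\circ\delta$ using property~(ii)---so there is nothing to compare.
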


\subsection{Multiplicative Structure on the Rescaled Spinor Bundle}

The rescaled spinor bundle $\mathbb{S}$ over the tangent groupoid that we constructed in Section~\ref{sec-rescaled} carries the standard multiplicative structure away from $\lambda=0$ since 
\[
\mathbb{S}\vert _{\lambda\ne 0}  = S\boxtimes S^* .
\]
The purpose of this section is to prove  the following result: 
\begin{theorem}
\label{thm-multiplicative-structure-on-S}
There is a unique multiplicative structure on the rescaled spinor bundle $\mathbb{S}$ over $\mathbb{TM}$ whose restriction away from $\lambda =0$ is the standard multiplicative structure.  On fibers at $\lambda =0$ the multiplication map
\[
\mathbb{S}_{(X_m,0)}\otimes \mathbb{S}_{(Y_m,0)}\longrightarrow \mathbb{S}_{(X_m+Y_m,0)} 
\]
is given by the formula
\begin{equation*}
\alpha \otimes \beta \longmapsto \alpha \wedge \beta \wedge 
	\exp \bigl(- \tfrac 12 \kappa (X_m,Y_m)\bigr ) .
\end{equation*}
\end{theorem}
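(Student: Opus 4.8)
The plan is to build the multiplication at the level of the module $S(\mathbb{T}M)$ of regular sections, mirroring the way the composition law of $\mathbb{T}M$ was assembled from a map of coordinate algebras. Uniqueness is immediate: the open set $\{\lambda\ne 0\}$ is dense in $\mathbb{T}M^{(2)}\cong\mathbb{N}_{M^3}M$, so a bundle morphism $p_1^*\mathbb{S}\otimes p_2^*\mathbb{S}\to c^*\mathbb{S}$ extending the standard one on $\{\lambda\ne 0\}$ is forced by continuity; the same density argument, applied over the space of composable triples, gives associativity once existence is in hand. So the content is existence together with the identification of the $\lambda=0$ formula. Throughout I use $\mathbb{T}M^{(2)}\cong\mathbb{N}_{M^3}M$ from \eqref{eq-tangent-groupoid-composable-pairs} and the fact, read off diagram \eqref{eq-composable-tangent-groupoid-pairs}, that the composable pair $\bigl((X_m,0),(Y_m,0)\bigr)$ corresponds to the character $\varepsilon_{(X_m,0_m,-Y_m)}$, which the composition law pushes forward to the character $\varepsilon_{(X_m,-Y_m)}$ of $A(\mathbb{T}M)$.

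For existence, introduce on $M^3$ the bundle $F$ with fibre $\operatorname{Hom}(S_{m_2},S_{m_1})\otimes\operatorname{Hom}(S_{m_3},S_{m_2})$ over $(m_1,m_2,m_3)$, together with the fibrewise composition map $F\to p_{13}^*(S\boxtimes S^*)$, which is parallel. Given $\sigma_1=\sum\sigma_{1,p}t^{-p}$ and $\sigma_2=\sum\sigma_{2,q}t^{-q}$ in $S(\mathbb{T}M)$, form the external products $\sigma_{1,p}\boxtimes\sigma_{2,q}$ and set $\rho=\sum_r\bigl(\sum_{p+q=r}\operatorname{contract}(\sigma_{1,p}\boxtimes\sigma_{2,q})\bigr)t^{-r}$, a Laurent polynomial of sections of $p_{13}^*(S\boxtimes S^*)$ over $M^3$; on $\{\lambda\ne 0\}$ one checks directly that $\widehat\rho$ is exactly $\widehat{\sigma_1}\circ\widehat{\sigma_2}$ in the standard multiplicative structure. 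The crucial point is that the composition of an external product has additive scaling order: if $\sigma_{1,p}$ and $\sigma_{2,q}$ have scaling order at least $p$ and $q$ near $m$, then $\operatorname{contract}(\sigma_{1,p}\boxtimes\sigma_{2,q})$ has scaling order at least $p+q$ near $m$, where the scaling order of sections over $M^3$ is measured, as in Definition~\ref{def-scaling-order}, by Getzler-order operators acting through the first factor alone. This reduces to generators: such an operator $D^{(1)}$ of Getzler order $q'$ commutes with the parallel composition map and touches only the $\sigma_{1,p}$-slot, so on the diagonal of $M^3$ the composition becomes the product in $\operatorname{End}(S_m)\cong\Cliff(T_mM)$ and the Clifford orders add, $\CliffordOrder\bigl(D^{(1)}\operatorname{contract}(\sigma_{1,p}\boxtimes\sigma_{2,q})\bigr)\le(q'-p)+(-q)=q'-(p+q)$. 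The $M^3$-analogue of the local-frame expansion of Lemma~\ref{lem-local-frame-expansion}, applied to the pulled-back frame $\{p_{13}^*e_I\}$, then writes $\rho$ as an $A(\mathbb{T}M^{(2)})$-combination of the algebraic sections obtained by pulling back $\widehat{e_It^{\ell(I)}}$, so $\rho$ is a smooth section of $c^*\mathbb{S}$; since $(\sigma_1,\sigma_2)\mapsto\rho$ is $\C$-bilinear and compatible with the $A(\mathbb{T}M)$-module structures via $p_1^*$ and $p_2^*$, it extends uniquely to the desired bundle morphism.

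To compute the morphism at $\lambda=0$, evaluate $\widehat\rho$ at $\bigl((X_m,0),(Y_m,0)\bigr)$, writing $\alpha=\varepsilon_{X_m}(\sigma_1)$ and $\beta=\varepsilon_{Y_m}(\sigma_2)$. In the character $\varepsilon_{(X_m,0_m,-Y_m)}$ the factor-$2$ direction is zero, and the covariant derivatives in the first and third factors of $M^3$ commute with the composition map and act on $\sigma_1\boxtimes\sigma_2$ through the two slots separately; hence evaluating $\rho$ amounts to evaluating $\sigma_1$ in its first factor along $X_m$ and $\sigma_2$ in its second factor along $-Y_m$ and then composing. Restricting to the diagonal of $M^3$ turns this composition into the product in $\operatorname{End}(S_m)\cong\Cliff(T_mM)$, whose symbol is the wedge of the symbols because the associated graded of the Clifford filtration is the exterior algebra via quantization; using \eqref{eq-normal-versus-tangent-special-case} to rewrite the factor-$3$ evaluation along $-Y_m$ as $\beta$, we get $\varepsilon_{(X_m,0_m,-Y_m)}(\rho)=\alpha\wedge\beta$. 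Finally, the fibre of $c^*\mathbb{S}$ there is $\mathbb{S}_{(X_m+Y_m,0)}=\wedge^*T_mM$, whose standard identification uses $\varepsilon_{X_m+Y_m}$, while the computation used the pushed-forward character $\varepsilon_{(X_m,-Y_m)}$. By Proposition~\ref{prop-normal-versus-tangent}, applied with $(X_m,-Y_m)$ in place of $(X_m,Y_m)$, $\varepsilon_{(X_m,-Y_m)}=\exp\bigl(\tfrac12\kappa(X_m,Y_m)\bigr)\circ\varepsilon_{X_m+Y_m}$; since $\exp\bigl(\tfrac12\kappa(X_m,Y_m)\bigr)$ is invertible in $\wedge^*T_mM$, re-expressing $\alpha\wedge\beta$ through the standard identification introduces the factor $\exp\bigl(-\tfrac12\kappa(X_m,Y_m)\bigr)$, yielding $\alpha\wedge\beta\wedge\exp\bigl(-\tfrac12\kappa(X_m,Y_m)\bigr)$ as required.

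I expect the main obstacle to be the bookkeeping in the existence step: setting up the Getzler and scaling filtrations for sections over $M^3$, verifying that $F\to p_{13}^*(S\boxtimes S^*)$ is parallel and respects Clifford order additively along the diagonal, and re-establishing the $M^3$-version of Lemma~\ref{lem-local-frame-expansion} so as to identify $\rho$ with a genuine section of $c^*\mathbb{S}$ (equivalently, with an element of $A(\mathbb{T}M^{(2)})\otimes_{A(\mathbb{T}M)}S(\mathbb{T}M)$). By contrast, the curvature term requires no new ideas: it is precisely the discrepancy, recorded in Proposition~\ref{prop-normal-versus-tangent}, between evaluating at a normal vector and evaluating at one of its tangent-vector representatives.
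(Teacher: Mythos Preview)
Your strategy is the paper's own: build the product on the algebraic module $S(\mathbb{T}M)$, show the composed section has the right scaling behaviour over $M^3$ so that it lies in $A(\mathbb{T}M^{(2)})\cdot c^*S(\mathbb{T}M)$, and read off the $\lambda=0$ formula via Proposition~\ref{prop-normal-versus-tangent}. The $\lambda=0$ computation you give is correct and is exactly how the paper argues: introduce $S(\mathbb{T}M^{(2)})$, check $\varepsilon_m(\rho\circ\tau)=\varepsilon_m(\rho)\wedge\varepsilon_m(\tau)$, propagate to $(X_m,0_m,-Y_m)$ by exponentiating commuting derivations, and apply Proposition~\ref{prop-normal-versus-tangent} with $(X_m,-Y_m)$ to pass from $\varepsilon_{(X_m,-Y_m)}$ to $\varepsilon_{X_m+Y_m}$.

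There is, however, a real gap in your existence step. You define the $M^3$-scaling order using Getzler operators in the \emph{first factor alone}, and for that notion your additivity argument (``$D^{(1)}$ touches only the $\sigma_{1,p}$-slot'') is fine. But first-factor operators do not probe the full normal bundle of the diagonal $M\subseteq M^3$: a function depending only on $(m_2,m_3)$ is invisible to them. Consequently the $M^3$-analogue of Lemma~\ref{lem-local-frame-expansion} you invoke---that first-factor scaling order at least $p$ forces each coefficient $f_I$ to vanish to order $p+\ell(I)$ on $M\subseteq M^3$---is false, and that implication is precisely what you need in order to place $f_I\, t^{-(p_1+p_2+\ell(I))}$ in $A(\mathbb{T}M^{(2)})$. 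The paper's Lemma~\ref{lem-pullback-scaling-order} repairs this by allowing operators in both the first \emph{and third} factors of $M^3$, which together do span the directions normal to the diagonal. To bound third-factor derivatives of $\sigma_{2,q}$ one must know that the right-hand scaling order on $M^2$ agrees with the left-hand one, and that is exactly the content of the discussion preceding Proposition~\ref{prop-normal-versus-tangent}. Once you replace ``first factor alone'' by ``first and third factors'' and invoke that left/right equivalence, your argument goes through and becomes the paper's.
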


The uniqueness statement in the theorem is clear since $\mathbb{T}M\setminus TM{\times}\{0\}$ is dense in $\mathbb{T}M$.  To prove the existence statement we shall show that  if $\rho,\tau \in S(\mathbb{T}M)$, and if the associated sections
of $\mathbb{S}$ are pulled back to $\mathbb{T}M^{(2)}$ via $p_1$ and $p_2$, and then  multiplied according to the formula in the statement of the theorem, then the result, namely
\begin{equation}
\label{eq-composition1}
\begin{cases}  (m_1,m_2,m_3,\lambda) \longmapsto \varepsilon_{(m_1,m_2,\lambda)}(\rho)\circ \varepsilon_{(m_2,m_3,\lambda)}(\tau) 
& (m_1,m_2,m_3\in M\quad \lambda {\ne} 0)
\\
(X_m,Y_m,0)\longmapsto   \varepsilon _{X_m}(\rho) \wedge \varepsilon _{Y_m} (\tau)  \wedge  \exp\bigl (\tfrac 12 \kappa (Y_m,X_m)\bigr )
& (X_m,Y_m\in T_mM )
\end{cases}
\end{equation}
is a smooth section of the pullback bundle  $c^*\mathbb{S}$ over $\mathbb{T}M^{(2)} $.  This will suffice. By linearity it further suffices to consider elements $\rho,\tau\in S(\mathbb{T}M)$ of the form
\[
\rho = \rho_{p_1} t^{-p_1} \quad \text{and} \quad \tau = \tau_{p_2} t^{-p_2} ,
\]
where $\rho_{p_1}$ and $\tau_{p_2}$ have scaling orders at least $p_1$ and $p_2$, respectively. 

Form the pointwise composition
\[
M{\times}M{\times}M \ni (m_1,m_2,m_3) \longmapsto  \rho_{p_1}(m_1,m_2)\circ \tau_{p_2}(m_2,m_3)\in S_{m_1}\otimes S_{m_3}^* ,
\]
which is a smooth section of the pullback to $M{\times}M{\times}M$ of $S{\boxtimes}S^*$ along the projection onto the first and third factors (which is the composition map for the pair groupoid).  As we did in Subsection~\ref{subsec-rescaled-module},  choose a local frame $\{e_I\}$ of $S{\boxtimes} S^*$ consisting of sections whose scaling orders are at least the negatives of their Clifford orders.  We can of course write 
\begin{equation}
\label{eq-composition2}
\rho_{p_1}(m_1,m_2)\circ \tau_{p_2}(m_2,m_3) = 
\sum _{I} f_I(m_1,m_2,m_3) \cdot e_I(m_1,m_3)
\end{equation}
where each $f_I$ is a smooth function on (an open subset of) $M{\times}M{\times}M$.

\begin{lemma}
\label{lem-pullback-scaling-order}
Each function $f_I$ defined above vanishes to order $p_1{+}p_2{+}\ell(I)$ or more on the diagonal $M\subseteq M{\times}M{\times}M$.
\end{lemma}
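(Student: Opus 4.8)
The plan is to establish a single Clifford-order estimate for the composed section and then extract the coefficient functions $f_I$ from it by a Taylor expansion along the full diagonal, much as in the proof of Lemma~\ref{lem-local-frame-expansion}. Write $p_{13}$ for the projection from $M^3 = M\times M\times M$ onto the product of its first and third factors, put $\Sigma := \rho_{p_1}\circ\tau_{p_2}$ for the section of $p_{13}^*(S\boxtimes S^*)$ considered in \eqref{eq-composition2}, and let $E_I := e_I(m_1,m_3)$, regarded as a section over $M^3$ pulled back along $p_{13}$ and hence parallel in the $m_2$ direction; thus $\Sigma = \sum_I f_I\cdot E_I$ near the diagonal.

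The estimate I would prove is the following. Suppose $D$ is an ordered product of $q$ operators, each of which is a covariant derivative $\nabla_X$ or a Clifford multiplication $c(X)$ on the first factor of $M^3$, or such an operator on the third factor, or a covariant derivative $\nabla_Z$ in a middle-factor direction; then $(D\Sigma)(m,m,m)$ lies in $\clifford_{q-(p_1+p_2)}(T_mM)$ for every $m$, under the identification $S_m\otimes S_m^*\cong\End(S_m)\cong\clifford(T_mM)$ of \eqref{eq-canonical-identifications}. To prove it, apply $D$ to $\Sigma$ by the Leibniz rule: a first-factor generator differentiates $\rho_{p_1}$ only, a third-factor generator differentiates $\tau_{p_2}$ only, and a middle-factor covariant derivative $\nabla_Z$ produces two terms, one covariantly differentiating $\rho_{p_1}$ in its second variable and one covariantly differentiating $\tau_{p_2}$ in its first variable --- here one uses that the spinor connection is compatible with the contraction $S_m^*\otimes S_m\to\C$. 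Expanding fully, $(D\Sigma)(m,m,m)$ becomes a finite sum of compositions $(D_\rho\rho_{p_1})(m,m)\circ(D_\tau\tau_{p_2})(m,m)$ in which the $q$ generators are distributed between $D_\rho$ and $D_\tau$, so $\order(D_\rho)+\order(D_\tau)=q$. Each constituent of $D_\rho$ lowers the scaling order of $\rho_{p_1}$ by at most one --- immediately from the definition of scaling order (Definition~\ref{def-scaling-order}) for the first-factor operators, and by the treatment of right operators in the subsection ``Tangent Vectors Versus Normal Vectors'' for the covariant derivative in the second variable --- so $(D_\rho\rho_{p_1})(m,m)\in\clifford_{\order(D_\rho)-p_1}(T_mM)$, and likewise $(D_\tau\tau_{p_2})(m,m)\in\clifford_{\order(D_\tau)-p_2}(T_mM)$. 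Since the Clifford filtration is multiplicative and composition of endomorphisms corresponds under \eqref{eq-canonical-identifications} to Clifford multiplication, the product lies in $\clifford_{q-(p_1+p_2)}(T_mM)$; summing proves the estimate. Finally, an arbitrary operator assembled from these generators with total Getzler order at most $q$ (each generator counting as order one) is locally a smooth-function combination of products of at most $q$ of them, and multiplication by a function cannot raise the Clifford order, so the estimate applies to all such operators.

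To finish, I would induct on $r$, showing that for every $I$ and every product $\partial^r$ of $r$ vector fields each tangent to a single factor of $M^3$, the function $\partial^r f_I$ vanishes on the full diagonal whenever $r < p_1+p_2+\ell(I)$. Realize $\partial^r$ as the ``all derivatives hit the scalar coefficients'' term of $D\Sigma$ for a suitable $D$ of order $r$ of the kind above (a first- or third-factor vector field $X$ implemented by $\nabla_X$, a middle-factor vector field by $\nabla_Z$, which annihilates the $m_2$-parallel section $E_I$). The Leibniz rule then gives $(D\Sigma)(m,m,m) = \sum_I (\partial^r f_I)(m,m,m)\, e_I(m,m)$ plus terms in which $E_I$ has been covariantly differentiated $r_2\ge 1$ times and the coefficient only $r_1 = r-r_2$ times; by the inductive hypothesis such a term's coefficient vanishes at the diagonal unless $r_1\ge p_1+p_2+\ell(I)$, in which case the differentiated $E_I$ already lies in $\clifford_{\ell(I)+r_2}(T_mM)\subseteq\clifford_{r-(p_1+p_2)}(T_mM)$. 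Hence $\sum_I (\partial^r f_I)(m,m,m)\, e_I(m,m)$ lies in $\clifford_{r-(p_1+p_2)}(T_mM)$, and comparing graded pieces from the top down --- where, for each length $d$, the leading parts $e_{i_1}\wedge\cdots\wedge e_{i_d}$ of the corresponding $e_I(m,m)$ are linearly independent in $\wedge^d T_mM$ --- forces $(\partial^r f_I)(m,m,m) = 0$ whenever $\ell(I) > r-(p_1+p_2)$. This closes the induction and is precisely the assertion that each $f_I$ vanishes to order $p_1+p_2+\ell(I)$ or more on the diagonal.

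The delicate point, I expect, is the Leibniz bookkeeping in the proof of the estimate: one has to check that differentiating $\rho_{p_1}\circ\tau_{p_2}$ distributes each generator cleanly onto one of the two factors with the Getzler orders adding, and one has to apply correctly the earlier fact that right covariant derivatives and right Clifford multiplications lower scaling order by at most one. Once the estimate is in hand, the inductive extraction of the $f_I$ is routine and closely parallels the argument already used for Lemma~\ref{lem-local-frame-expansion}.
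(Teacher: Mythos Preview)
Your proposal is correct and follows the paper's strategy: establish that $\Sigma = \rho_{p_1}\circ\tau_{p_2}$ has the appropriate three-factor scaling order (at least $p_1+p_2$), then extract the vanishing of each $f_I$ via Leibniz and the linear independence of the leading parts of the $e_I(m,m)$. The paper phrases the extraction as a contradiction using a deficient index of minimal vanishing order rather than your forward induction on $r$, and works only with first- and third-factor operators (which suffice, since $\partial_{m_1}$ and $\partial_{m_3}$ already span the normal directions to the full diagonal in $M^3$), but the content is the same.
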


\begin{proof}
Let us call an index $I$ \emph{regular} if $f_I$ vanishes to order $p_1{+}p_2{+}\ell(I)$ or more on the diagonal, and \emph{deficient} otherwise.  Write 
\begin{multline}
\label{eq-composition-1}
\rho_{p_1}(m_1,m_2)\circ \tau_{p_2}(m_2,m_3) - \sum_{I\,\text{regular}}f_I(m_1,m_2,m_3) \cdot e_I(m_1,m_3) 
\\= 
\sum _{I\,\text{deficient}} f_I(m_1,m_2,m_3) \cdot e_I(m_1,m_3) .
\end{multline}
The left-hand side has    scaling order $p_1+p_2$ or more  in the sense Definition~\ref{def-scaling-order}, except using covariant derivatives and Clifford multiplications in both the first and third factors in $M{\times}M{\times}M$.  If there were any deficient indices at all, then we could choose a deficient $I_{\mathrm{min}}$ for which the vanishing order of $f_I$  was minimal.  Call the vanishing order $q$;   of course
\begin{equation}
\label{eq-deficient-ineq}
q< p_1+p_2+\ell(I_{\mathrm{min}})
\end{equation}
by definition of deficiency.   We could then find a differential operator $D$ of order  $q$ so that $D( f_I e_I)$ is a smooth function multiple of  $e_I$ along the diagonal for all deficient $I$, and a nonzero function multiple for $I_{\mathrm{min}}$.  But the the Clifford order of the right-hand side of \eqref{eq-composition-1} after applying $D$  would be at  least $\ell(I_{\mathrm{min}})$, whereas the Clifford order of the left-hand side after applying $D$ would be  at most $q-p_1-p_2$. This contradicts \eqref{eq-deficient-ineq}. \end{proof}

Now write 
\[
F_I = f_I t^{-(p_1+p_2 + \ell(I))} \in A (\mathbb{T}M^{(2)})\quad \text{and} \quad 
\sigma_I = e_I t^{\ell(I)} \in S(\mathbb{T}M) .
\]
We should like to prove that the section \eqref{eq-composition1} is given by the formula
\begin{equation}
\label{eq-composition3}
\begin{cases}
 (m_1,m_2,m_3,\lambda) \longmapsto \sum_{I} \varepsilon_{(m_1,m_2,m_3,\lambda)} (F_I)\varepsilon_{(m_1,m_3,\lambda)}(\sigma) 
& 
\\
(X_m,Y_m,0)\longmapsto    \sum_{I} \varepsilon_{(X_m,0_m,-Y_m)} (F_I) \varepsilon _{X_m+Y_m}(\sigma_I) . & 
\end{cases}
\end{equation}
See \eqref{eq-composable-tangent-groupoid-pairs} for the notation. Since \eqref{eq-composition3} is a combination of smooth functions on $\mathbb{T}M^{(2)}$, times pullbacks to $\mathbb{T}M^{(2)}$ of smooth sections of $\mathbb{S}$, this will suffice.  

The identity of \eqref{eq-composition1} and \eqref{eq-composition3} away from $\lambda=0$ is clear, and we have seen in Proposition~\ref{prop-normal-versus-tangent} that  
\[
 \varepsilon _{(X_m+Y_m, 0_m)}(\sigma_I) = \tfrac 12 \kappa (Y_m,X_m)\wedge \varepsilon_{(X_m,-Y_m)}(\sigma_I).
\]
So in fact it suffices to prove that 
\begin{equation*}
\varepsilon _{X_m}(\rho) \wedge \varepsilon _{Y_m} (\tau)  = \sum_{I} \varepsilon_{(X_m,0_m,-Y_m)} (F_I) \varepsilon _{(X_m, -Y_m)}(\sigma_I) ,
\end{equation*}
or, using \eqref{eq-normal-versus-tangent-special-case}, that 
\begin{equation}
\label{eq-composition4}
\varepsilon _{(X_m,0_m)}(\rho) \wedge \varepsilon _{(0_m,-Y_m)} (\tau)  = \sum_{I} \varepsilon_{(X_m,0_m,-Y_m)} (F_I) \varepsilon _{(X_m, -Y_m)}(\sigma_I) .
\end{equation}
A systematic way to check this formula is to introduce the space $S(\mathbb{T}M^{(2)})$ of Laurent polynomials $\sum \sigma_p t^{-p}$ in which $\sigma_p$ is a smooth section of 
$S{\boxtimes} \C{\boxtimes} S^*$ over $M{\times} M {\times} M$ that has scaling order $p$, as in the proof of Lemma~\ref{lem-pullback-scaling-order}. This is a module over $A(\mathbb{T}M^{(2)})$, and we have the obvious identity
\[
\rho \circ \tau  = \sum _I F_I \cdot \sigma_I
\]
in $S(\mathbb{T}M^{(2)})$, and hence  in the quotient 
\[
S_0(\mathbb{T}M^{(2)}) =
S(\mathbb{T}M^{(2)}) \big / t\cdot S(\mathbb{T}M^{(2)}).
\]
The morphism
\[
\varepsilon _m \colon S_0(\mathbb{T}M^{(2)}) \longrightarrow \wedge^* T_mM
\]
defined, following Definition~\ref{def-eval-map-for-S-zero}, by 
\[
\varepsilon _m 
\colon \sum \sigma_p t^{-p} \longmapsto 
\sum [\sigma_{-d} (m,m,m)]_d
\]
has the properties that 
\[
\varepsilon _m(\rho\circ \tau) =  \varepsilon _m(\rho) \wedge \varepsilon_m(\tau)
\]
and that 
\[
\varepsilon _m(F\cdot \sigma)  =  \varepsilon _m(F) \cdot  \varepsilon_m(\sigma) ,
\]
and these settle \eqref{eq-composition4} in the special case where $X_m=Y_m=0$.  The general case is settled by applying the special case to the elements
\begin{multline*}
\overline{\rho} = \exp\bigl ( \boldsymbol{\nabla_{(X,0)}} \bigr ) \rho ,
\quad 
\overline{\tau}= \exp\bigl ( \boldsymbol{\nabla_{(0,-Y)}}\bigr ) \tau,
\\
\quad \overline{F_I} =  \exp\bigl ( \boldsymbol{\nabla_{(X,0,-Y)}} \bigr ) F_I ,
\quad \text{and} \quad 
\overline{\sigma_I} =  \exp\bigl ( \boldsymbol{\nabla_{(X,-Y)}}\bigr ) \sigma_I ,
\end{multline*}
for which $\overline{\rho}\circ\overline{\tau} = \sum_I \overline{F_I}\cdot \overline{\sigma_I}$.

\section{Convolution Algebra and Traces}
\label{sec-convolution-algebra}

The multiplicative structure on $\mathbb{S}$ provides us with a convolution algebra $C^\infty_c(\mathbb{T}M,\mathbb{S})$. In this section we shall construct our family of supertraces on this algebra.

\subsection{A Haar System for theTangent  Groupoid}

Let $M$ be a smooth manifold.  The  target fibers of $\mathbb{T}M$ are of course 
\[
\mathbb{T}M^{(m,\lambda)} = \{m\} {\times} M {\times} \{ \lambda \}
\]
and 
\[
\mathbb{T}M^{(X_m,0)} =T_m M  {\times} \{ 0\}
\]
If we fix a smooth measure $\mu$ on $M$, and if we denote by $\mu^m$ the associated translation-invariant measures on the tangent spaces $T_mM$, then the formulas 
\begin{equation}
\label{eq-left-Haar}
\left \{ 
\begin{aligned} 
\mu^{(m,\lambda) } & = |\lambda|^{-n} \mu \\
\mu^{(X_m,0) } & =   \mu^m
\end{aligned}
\right .
\end{equation}
define a smooth left Haar system for $\mathbb{T}M$.  So we can now form the associated \emph{tangent groupoid  algebra} $C_c^\infty(\mathbb{T}M)$.

 \begin{proposition}
 \label{prop-sections-tangent-groupoid-alg}
For $\lambda \ne 0$ the linear map
\[
\varepsilon _\lambda \colon C_c^\infty (\mathbb{T}M) \longrightarrow \mathfrak{K} (L^2 (M))
\]
given by the formula 
\[
 \varepsilon _\lambda(f) \colon (m_1,m_2) \longmapsto \lambda^{-n} f(m_1,m_2,\lambda)
 \]
is a homomorphism of algebras. In addition the linear  map 
\[
\varepsilon _0 \colon C_c^\infty (\mathbb{T}M, \mathbb{S}) \longrightarrow  C_c^\infty (TM)
\]
given by the formula 
\[
 \varepsilon _0(f) \colon X_m \longmapsto  f(X_m,0)
 \]
is a homomorphism of algebras, too, if the target $C_c^\infty (TM)$ is equipped with the fiberwise convolution product.  \qed
\end{proposition}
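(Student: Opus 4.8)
The plan is to deduce both homomorphisms from a single observation — that for fixed $\lambda$, restriction of functions along the inclusion of the closed subgroupoid $\mathbb{T}M_\lambda\subseteq\mathbb{T}M$ lying over the slice $M{\times}\{\lambda\}$ of the unit space is an algebra homomorphism from $C_c^\infty(\mathbb{T}M)$ to the convolution algebra of $\mathbb{T}M_\lambda$ — and then to identify that convolution algebra explicitly for $\lambda\ne 0$ and for $\lambda=0$, matching the two displayed formulas.

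First I would note that both the source and target maps of $\mathbb{T}M$ preserve the $\R$-coordinate of the unit space $M{\times}\R$; this is immediate from \eqref{eq-subgroupoids}, since $\mathbb{T}M_\lambda$ is the pair groupoid $M{\times}M$ when $\lambda\ne 0$ and the bundle of vector groups $TM$ when $\lambda=0$. Consequently, for a unit $(m,\lambda)$ the entire target fiber $\mathbb{T}M^{(m,\lambda)}$ lies inside $\mathbb{T}M_\lambda$, and the Haar measure $\mu^{(m,\lambda)}$ of \eqref{eq-left-Haar} on it is exactly the Haar measure of the subgroupoid $\mathbb{T}M_\lambda$ at that unit — the constant measure $|\lambda|^{-n}\mu$ on $\{m\}{\times}M{\times}\{\lambda\}\cong M$ when $\lambda\ne 0$, and the translation-invariant measure $\mu^m$ on $T_mM$ when $\lambda=0$. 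Hence, for $f_1,f_2\in C_c^\infty(\mathbb{T}M)$ and $\eta\in\mathbb{T}M_\lambda$, the convolution integral
\[
(f_1\star f_2)(\eta)=\int_{\mathbb{T}M^{t(\eta)}}f_1(\gamma)\,f_2(\gamma^{-1}\circ\eta)\,d\mu^{t(\eta)}(\gamma)
\]
only involves the values of $f_1$ and $f_2$ on $\mathbb{T}M_\lambda$, and it computes the convolution of $f_1|_{\mathbb{T}M_\lambda}$ with $f_2|_{\mathbb{T}M_\lambda}$ inside $C_c^\infty(\mathbb{T}M_\lambda)$. Since restricting a smooth, compactly supported function to the closed submanifold $\mathbb{T}M_\lambda$ again gives a smooth, compactly supported function, the map $f\mapsto f|_{\mathbb{T}M_\lambda}$ is an algebra homomorphism $C_c^\infty(\mathbb{T}M)\to C_c^\infty(\mathbb{T}M_\lambda)$.

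It then remains to identify the two target algebras. For $\lambda\ne 0$, the convolution algebra of the pair groupoid $M{\times}M$ equipped with the constant Haar measure $|\lambda|^{-n}\mu$ is, via the assignment sending a function $g$ to the integral operator on $L^2(M,\mu)$ with Schwartz kernel $g$ relative to $\mu$, the algebra of integral operators with smooth, compactly supported kernels; in particular $\varepsilon_\lambda(f)$ lies in $\mathfrak{K}^\infty(L^2(M))$, and under this identification the restriction $f|_{\mathbb{T}M_\lambda}$ corresponds to the operator with kernel $(m_1,m_2)\mapsto|\lambda|^{-n}f(m_1,m_2,\lambda)$, which is precisely $\varepsilon_\lambda(f)$. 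For $\lambda=0$, the convolution algebra of the vector-group bundle $TM$ with the fiberwise Lebesgue measures $\mu^m$ is, by definition, $C_c^\infty(TM)$ with fiberwise convolution, and $f|_{\mathbb{T}M_0}$ is the function $X_m\mapsto f(X_m,0)=\varepsilon_0(f)(X_m)$. Composing these identifications with the restriction homomorphism proves the proposition. The identical argument, carried out for sections of $\mathbb{S}$ and using the multiplicative structure of Theorem~\ref{thm-multiplicative-structure-on-S} in place of composition of kernels, yields the spinorial evaluation homomorphisms \eqref{eq-eval-t-ne-1} and \eqref{eq-eval-t-eq-1}.

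The only genuinely delicate point is the bookkeeping of the rescaling factor. One must reconcile the $|\lambda|^{-n}$ built into the Haar system \eqref{eq-left-Haar} with the $\lambda^{-n}$ appearing in the formula for $\varepsilon_\lambda$ — these coincide because $\dim M$ is even, and in general one reads $\lambda^{-n}$ as $|\lambda|^{-n}$ — and one should remember that the pair-groupoid algebra carrying the scaled measure $|\lambda|^{-n}\mu$ is being realised on the fixed Hilbert space $L^2(M,\mu)$ rather than on $L^2(M,|\lambda|^{-n}\mu)$, the two being related by the evident unitary rescaling. Beyond this normalization, the statement is a direct unwinding of the definition of groupoid convolution, which is why it can be left to the reader.
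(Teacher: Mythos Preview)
Your argument is correct and is exactly the natural one: the paper offers no proof at all (the statement is followed immediately by \qed), so you are simply spelling out the routine verification that restriction to the closed subgroupoid $\mathbb{T}M_\lambda$ intertwines convolution, followed by the obvious identification of the pair-groupoid and vector-group convolution algebras. Your observation about the $\lambda^{-n}$ versus $|\lambda|^{-n}$ discrepancy is well taken; in the spinorial sections of the paper $M$ is assumed even-dimensional, but here it is not, so the formula should indeed be read with $|\lambda|^{-n}$.
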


\subsection{Twisted Convolution on the Tangent Bundle}
 
The first statement in Proposition~\ref{prop-sections-tangent-groupoid-alg} has an obvious spinorial counterpart: 

 \begin{proposition}
For $\lambda \ne 0$ the morphism
\[
\varepsilon _\lambda \colon C_c^\infty (\mathbb{T}M, \mathbb{S}) \longrightarrow \mathfrak{K} ^\infty (L^2 (M,S))
\]
given by the formula 
\[
 \varepsilon _\lambda(\sigma) \colon (m_1,m_2) \longmapsto \lambda^{-n} \sigma(m_1,m_2,\lambda)
 \]
is a homomorphism of algebras. \qed
\end{proposition}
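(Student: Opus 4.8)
The plan is to reduce the claim to the scalar statement Proposition~\ref{prop-sections-tangent-groupoid-alg}, using the fact that away from $\lambda = 0$ the multiplicative structure on $\mathbb{S}$ is the \emph{standard} one identified in Theorem~\ref{thm-multiplicative-structure-on-S}.

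Fix $\lambda \in \R^{\times}$ and set $n = \dim M$. Since $M$ is even-dimensional, $\lambda^{-n} = |\lambda|^{-n}$, so the $\lambda$-slice of the Haar system \eqref{eq-left-Haar} is the measure $\lambda^{-n}\mu$. The subgroupoid $\mathbb{T}M_\lambda \cong M{\times}M$ of \eqref{eq-subgroupoids} is closed in $\mathbb{T}M$, and each of its target fibres $\{m\}{\times}M{\times}\{\lambda\}$ is a target fibre of $\mathbb{T}M$; consequently restriction of compactly supported sections to $\mathbb{T}M_\lambda$ intertwines the convolution product of Lemma~\ref{lem-muliplicative-convolution} with the analogous product on the pair groupoid $M{\times}M$ built from $\mathbb{S}\vert_{\mathbb{T}M_\lambda} = S\boxtimes S^*$, its multiplicative structure, and the measure $\lambda^{-n}\mu$. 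By Theorem~\ref{thm-multiplicative-structure-on-S} this multiplicative structure is the composition of the fibres $\operatorname{Hom}(S_{m_2},S_{m_1})$, so unwinding Lemma~\ref{lem-muliplicative-convolution} on the pair groupoid yields
\[
(\sigma_1 \star \sigma_2)(m_1,m_3,\lambda) = \lambda^{-n}\int_M \sigma_1(m_1,m_2,\lambda)\circ \sigma_2(m_2,m_3,\lambda)\, d\mu(m_2) .
\]
Each slice $\sigma(\cdot,\cdot,\lambda)$ is a smooth section of $S\boxtimes S^*$ with compact support (compact since $\mathbb{T}M_\lambda$ is closed in $\mathbb{T}M$), hence the Schwartz kernel relative to $\mu$ of an operator in $\mathfrak{K}^\infty(L^2(M,S))$; writing $\operatorname{Op}_\mu(k)$ for the operator with kernel $k$, we have $\varepsilon_\lambda(\sigma) = \operatorname{Op}_\mu(\lambda^{-n}\sigma(\cdot,\cdot,\lambda))$.

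Finally I would match normalisations. Operator composition satisfies $\operatorname{Op}_\mu(k_1)\operatorname{Op}_\mu(k_2) = \operatorname{Op}_\mu\bigl(\int_M k_1(m_1,m_2)\circ k_2(m_2,m_3)\, d\mu(m_2)\bigr)$, so taking $k_i = \lambda^{-n}\sigma_i(\cdot,\cdot,\lambda)$ gives $\varepsilon_\lambda(\sigma_1)\,\varepsilon_\lambda(\sigma_2) = \operatorname{Op}_\mu\bigl(\lambda^{-2n}\int_M \sigma_1\circ\sigma_2\, d\mu\bigr)$, which by the displayed identity equals $\operatorname{Op}_\mu\bigl(\lambda^{-n}(\sigma_1\star\sigma_2)(\cdot,\cdot,\lambda)\bigr) = \varepsilon_\lambda(\sigma_1\star\sigma_2)$. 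I do not expect a real obstacle here; the substantive input is already contained in Theorem~\ref{thm-multiplicative-structure-on-S} and the scalar Proposition~\ref{prop-sections-tangent-groupoid-alg}. The one point worth stating carefully is that the factor $\lambda^{-n}$ in the definition of $\varepsilon_\lambda$ is precisely what absorbs the $|\lambda|^{-n}$ of the Haar system \eqref{eq-left-Haar} and converts groupoid convolution into genuine operator composition — this is exactly what makes $\varepsilon_\lambda$ an algebra homomorphism rather than merely a linear map.
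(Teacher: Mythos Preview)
Your argument is correct and is precisely the expansion the paper has in mind: the proposition is stated with a bare \qed as the ``obvious spinorial counterpart'' of Proposition~\ref{prop-sections-tangent-groupoid-alg}, and you have written out exactly those details---restriction to the closed $\lambda$-slice, the standard multiplicative structure from Theorem~\ref{thm-multiplicative-structure-on-S}, and the matching of the $\lambda^{-n}$ factor with the Haar system~\eqref{eq-left-Haar}. Nothing is missing.
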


We can also define 
 \[
\varepsilon _0 \colon C_c^\infty (\mathbb{T}M, \mathbb{S}) \longrightarrow  C_c^\infty (TM, \wedge^*TM) 
\]
by   restriction, so that 
\[
\varepsilon _0(\sigma ) \colon X_m \longmapsto  \sigma (X_m,0)  .
\]
But in order to make this a homomorphism of algebras we need to adjust the convolution operation on $C_c^\infty (TM, \wedge^*TM) $, in accordance with Theorem~\ref{thm-multiplicative-structure-on-S}, as follows.
 
 \begin{proposition}
If the space $C_c^\infty (TM, \wedge^* TM)$ is equipped with the twisted convolution product 
\[
(\varphi_1\star \varphi_2 )(X_m) = \int _{T_mM} \varphi _1(X_m - Y_m) \wedge \varphi_2(Y_m)\wedge  \exp (\tfrac 12  \kappa (Y_m, X_m) ) \, d \mu^m(Y_m) ,
\]
then the restriction map 
\[
\varepsilon _0 \colon C_c^\infty (\mathbb{T}M, \mathbb{S}) \longrightarrow  C_c^\infty (TM, \wedge^*TM)
\]
is a homomorphism of algebras. \qed
\end{proposition}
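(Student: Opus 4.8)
The plan is to evaluate both convolution products at an arbitrary point $(X_m,0)$ of the $\lambda=0$ subgroupoid $TM\subseteq\mathbb{T}M$ and to check directly that they agree. All the ingredients are already available: the convolution on $C_c^\infty(\mathbb{T}M,\mathbb{S})$ furnished by Lemma~\ref{lem-muliplicative-convolution}, the Haar system \eqref{eq-left-Haar}, the composition law $(Z_m,0)\circ(W_m,0)=(Z_m+W_m,0)$ on $\mathbb{T}M$ at $\lambda=0$, and the $\lambda=0$ multiplication formula of Theorem~\ref{thm-multiplicative-structure-on-S}.

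First I would note that $\varepsilon_0$ really does take values in $C_c^\infty(TM,\wedge^*TM)$: the restriction of a smooth section of $\mathbb{S}$ to the closed submanifold $TM{\times}\{0\}$ is a smooth section of the restricted bundle, whose fibre over $(X_m,0)$ is $\wedge^*T_mM=(\wedge^*TM)_m$, so the restriction is a smooth section of the pullback of $\wedge^*TM$ to $TM$; compact support is evidently preserved. (This restriction map is moreover surjective, since any smooth section over $TM{\times}\{0\}$ extends to a neighbourhood and can then be cut off, so the twisted product inherits associativity once $\varepsilon_0$ is shown to be multiplicative; but we shall not need this.)

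Next, fixing $\sigma_1,\sigma_2\in C_c^\infty(\mathbb{T}M,\mathbb{S})$, I would compute as follows. Over the target point $(m,0)$ the fibre is $T_mM$ with the translation-invariant measure $\mu^m$ of \eqref{eq-left-Haar}, and the inverse of $(Z_m,0)$ in the vector-group structure on the fibres of $TM$ is $(-Z_m,0)$, with $(-Z_m,0)\circ(X_m,0)=(X_m-Z_m,0)$. Hence Lemma~\ref{lem-muliplicative-convolution} gives
\[
(\sigma_1\star\sigma_2)(X_m,0)=\int_{T_mM}\sigma_1(Z_m,0)\circ\sigma_2(X_m-Z_m,0)\,d\mu^m(Z_m).
\]
Both factors lie in $\wedge^*T_mM$, and Theorem~\ref{thm-multiplicative-structure-on-S} identifies their product with
\[
\sigma_1(Z_m,0)\wedge\sigma_2(X_m-Z_m,0)\wedge\exp\bigl(-\tfrac12\kappa(Z_m,X_m-Z_m)\bigr).
\]
Because $\kappa$ is bilinear and antisymmetric in its two tangent-vector arguments — it is $\gamma$ applied to the curvature tensor — one has $\kappa(Z_m,X_m-Z_m)=\kappa(Z_m,X_m)$. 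Substituting $Z_m=X_m-Y_m$, which is legitimate since $\mu^m$ is translation-invariant, and using $\kappa(X_m-Y_m,X_m)=-\kappa(Y_m,X_m)$, the integral becomes
\[
\int_{T_mM}\sigma_1(X_m-Y_m,0)\wedge\sigma_2(Y_m,0)\wedge\exp\bigl(\tfrac12\kappa(Y_m,X_m)\bigr)\,d\mu^m(Y_m),
\]
which is exactly $(\varepsilon_0(\sigma_1)\star\varepsilon_0(\sigma_2))(X_m)$ for the twisted convolution in the statement, since $\varepsilon_0(\sigma_i)(W_m)=\sigma_i(W_m,0)$. Here one uses throughout that $\exp(\pm\tfrac12\kappa)$ is a sum of even-degree forms and hence central in $\wedge^*T_mM$, so its position among the wedge factors does not matter.

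The argument is essentially bookkeeping, and the only point that demands care is the matching of conventions: the order of the two arguments of the multiplicative structure in Theorem~\ref{thm-multiplicative-structure-on-S}, the sign of the inverse in the $\lambda=0$ groupoid, and the reflection change of variables $Z_m=X_m-Y_m$, must all be tracked so that the curvature exponential emerges as precisely the factor $\exp(\tfrac12\kappa(Y_m,X_m))$ occurring in the definition of the twisted product. There is no deeper obstacle.
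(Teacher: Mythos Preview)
Your argument is correct and is exactly the computation the paper has in mind: the proposition carries a \qed in its statement because the authors regard it as an immediate consequence of Lemma~\ref{lem-muliplicative-convolution}, the Haar system \eqref{eq-left-Haar}, and the $\lambda=0$ multiplication formula of Theorem~\ref{thm-multiplicative-structure-on-S}. You have simply written out the bookkeeping that the paper leaves implicit, including the use of the antisymmetry of $\kappa$ and the change of variables $Z_m=X_m-Y_m$; there is no divergence in approach.
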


\begin{remark} This is an appropriate time to note that our approach shares much with a manuscript of  Siegel \cite{Siegel10}. The above proposition is, however at variance with the corresponding formula there \cite[p.16]{Siegel10}.
\end{remark}

\subsection{Supertraces on the Clifford Algebra}

Let  $E$ be an even-dimensional and oriented Euclidean vector space.  Let  $e_1,\dots, e_n$ be an oriented orthonormal basis for $E$, and for   $I = (i_1< i_2 < \cdots <i_d)$ 
let 
\[
e_I = e_{i_1}\cdot e_{i_2} \cdot \ldots \cdot e_{i_d} \in \clifford (E) .
\]
The linear functional 
\[
\str\colon \clifford (E) \longrightarrow \C
\]
defined by 
\begin{equation}
\label{eq-str-fmla2}
\str(e_I ) = \begin{cases} 1 & I=(1,2,\dots, n) \\ 0 & \text{otherwise.}
\end{cases}
\end{equation}
is independent of the choice of oriented, orthonormal basis, and is a supertrace on the Clifford algebra. 
 See \cite[Sec.\ 2.2.8]{Meinrenken13}.
Note that 
\begin{equation}
\label{eq-str-fmla3}
\str\vert _{\clifford_{n-1}(E)} = 0 .
\end{equation}

The supertrace can be calculated using the  irreducible representation
\[
 c\colon \clifford (E) \stackrel \cong \longrightarrow \End(S)
 \]
 as follows.  
 The element 
\begin{equation}
\label{eq-grading-matrix}
s = i ^{\frac n2} e_1\cdot \ldots \cdot e_n \in \clifford (E)
\end{equation} 
is independent of the choice of  {oriented} orthonormal basis and  satisfies $s^2 = 1$. The self-adjoint operator $c(s)$  determines a $\Z/2$-grading of the vector space $S$, and 
\begin{equation}
\label{eq-str-fmla4}
\str(x) =    \left ( \tfrac i 2 \right ) ^{\frac n 2} \Tr (c(s) c(x))
\end{equation}
for all $x\in \clifford(E)$.

\subsection{Supertraces on the Convolution Algebra}
\label{subsec-supertraces}

The Hilbert space $L^2 (M,S)$ carries a $\mathbb Z/2$-grading that is defined as follows. If $e_1,\dots, e_n$ is any local oriented orthonormal frame for the tangent bundle of $M$, then the product 
\[
c(s) = i ^{\frac n2} c(e_1) c(e_2) \dots c(e_n)  
\]
defines locally an endomorphism of $S$ whose square is the identity.  It is in fact independent of the choice of local oriented orthonormal frame, and so the formula above defines a canonical global endomorphism of $S$. It is self-adjoint and  squares to the identity, and is by definition the grading operator for the $\Z/2$-grading on $L^2 (M,S)$.

We shall denote by 
\[
\Str\colon  \mathfrak{K}^\infty(L^2 (M,S))  \longrightarrow \C
\]
the associated supertrace, and we shall use this to define a family of supertraces on $C_c^\infty (\mathbb{T}M, \mathbb{S})$, as follows:

\begin{definition} 
For $\lambda \in \R \setminus \{ 0\}$ we shall denote by 
\[
\Str_\lambda\colon C_c^\infty (\mathbb{TM}, \mathbb{S}) \longrightarrow \C
\]
 the composition
\[
C_c^\infty (\mathbb{TM}, \mathbb{S})\stackrel{\varepsilon _\lambda}\longrightarrow   \mathfrak{K}^\infty(L^2 (M,S)) \stackrel{\Str} \longrightarrow \C ,
\]
as in \eqref{eq-Str-lambda-neq-0}.
In addition, we define the supertrace $\Str_0$ using \eqref{eq-Str-lambda-eq-0}.
\end{definition}

\begin{theorem}
\label{thm-smoothly-varying-traces}
If $\tau \in C_c^\infty (\mathbb{T}M, \mathbb{S})$, then $\lambda\mapsto \Str_\lambda (\tau)$ is a smooth function of $\lambda \in \R$.
\end{theorem}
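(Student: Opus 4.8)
The plan is to reduce everything to the behaviour of the evaluation morphisms $\varepsilon_\lambda$ on a fixed section $\tau$, and in particular to an explicit formula for the Schwartz kernel of $\varepsilon_\lambda(\tau)$ as $\lambda \to 0$. First I would recall that for $\lambda \ne 0$ the operator $\varepsilon_\lambda(\tau)$ is the smoothing operator on $L^2(M,S)$ with kernel $(m_1,m_2)\mapsto \lambda^{-n}\tau(m_1,m_2,\lambda)$, so that, using the pointwise formula \eqref{eq-str-fmla4} for the Clifford supertrace fibrewise on the diagonal,
\[
\Str_\lambda(\tau) = \bigl(\tfrac i2\bigr)^{n/2}\int_M \lambda^{-n}\,\Tr\bigl(c(s_m)\,\tau(m,m,\lambda)\bigr)\,d\mu(m).
\]
The whole question is therefore the smoothness at $\lambda=0$ of $\lambda\mapsto \int_M \lambda^{-n}\Tr(c(s_m)\tau(m,m,\lambda))\,d\mu(m)$, together with the identification of its value at $\lambda=0$ with $(2/i)^{n/2}\int\varepsilon_0(\tau)$.

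The key step is to work in the local frame $\{e_I t^{\ell(I)}\}$ of $\mathbb S$ near a diagonal point, as in Subsection~\ref{subsec-rescaled-module}. Since $\tau$ is a smooth section of $\mathbb S$, near $(m,m,0)$ it is a $\sheafA_{\mathbb T M}$-linear combination $\tau = \sum_I g_I\,\widehat{e_I t^{\ell(I)}}$ with $g_I\in C^\infty(\mathbb T M)$; it suffices by a partition of unity to treat a single such term, and by $C^\infty(\mathbb T M)$-linearity of the whole construction it is enough to understand $\widehat{e_I t^{\ell(I)}}$ itself. Evaluating at $(m,m,\lambda)$ for $\lambda\ne 0$ gives $\varepsilon_{(m,m,\lambda)}(e_I t^{\ell(I)}) = \lambda^{\ell(I)}\,e_I(m,m)\in\Cliff(T_mM)$, where $e_I(m,m)=e_{i_1}\cdots e_{i_d}$ has Clifford order exactly $\ell(I)=d$. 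Hence $\lambda^{-n}\Tr(c(s_m)\,\lambda^{\ell(I)}e_I(m,m)) = \lambda^{\ell(I)-n}\Tr(c(s_m)c(e_I(m,m)))$, and by \eqref{eq-str-fmla3}--\eqref{eq-str-fmla4} this vanishes unless $\ell(I)=n$, i.e.\ $I=(1,\dots,n)$, in which case it is the $\lambda$-independent constant $(2/i)^{n/2}$ (up to the global normalisation) times a smooth function of $m$. The combination with the coefficient functions $g_I(m,m,\lambda)$, which are smooth in $\lambda$ down to $\lambda=0$ because $g_I\in C^\infty(\mathbb T M)$, then shows that $\lambda^{-n}\Tr(c(s_m)\tau(m,m,\lambda))$ is a smooth function of $(m,\lambda)\in M\times\R$; integrating over the compact support in $M$ and invoking differentiation under the integral sign yields smoothness of $\Str_\lambda(\tau)$ in $\lambda$. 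Setting $\lambda=0$ picks out the $I=(1,\dots,n)$ coefficient, which is exactly $\varepsilon_0(\tau)$ paired against the top exterior power and integrated over $M$ with the constant $(2/i)^{n/2}$, reproducing the definition \eqref{eq-Str-lambda-eq-0} of $\Str_0$; so the smooth function of $\lambda$ extends the $\lambda\ne 0$ family continuously through its declared value at $0$.

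The main obstacle is making the reduction "to a single frame element $\widehat{e_I t^{\ell(I)}}$'' fully rigorous: one must be careful that a general smooth section of $\mathbb S$ really is, locally near each $\gamma=(X_m,0)$, an $\sheafA_{\mathbb T M}$-combination of the $\widehat{e_I t^{\ell(I)}}$ — this is precisely the local-freeness theorem proved earlier, so it is available — and that the coefficient functions $g_I$ extend smoothly across $\lambda=0$, which again is built into $\sheafA_{\mathbb T M}$ by Definition~\ref{def-sheaf-of-smooth-functions}. A secondary technical point is uniformity: the frame $\{e_I\}$ and the decomposition are only local on $M$, so one fixes a finite cover of the (compact) projection of $\operatorname{supp}\tau$ to $M$, a subordinate partition of unity pulled back to $\mathbb T M$, and checks that the finitely many local contributions each depend smoothly on $\lambda$ with locally uniform derivative bounds, so that the integral over $M$ inherits smoothness. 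No genuinely hard estimate is needed beyond these bookkeeping steps; the vanishing of the lower-Clifford-order terms under $\str$ is what makes the a priori singular factor $\lambda^{-n}$ harmless.
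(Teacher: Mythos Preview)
Your proposal is correct and follows essentially the same route as the paper's proof: both reduce $\Str_\lambda(\tau)$ to $\int_M \lambda^{-n}\str(\tau(m,m,\lambda))\,d\mu(m)$, localize, and use that $\str$ vanishes on $\Cliff_{n-1}$ to kill the apparently singular powers of $\lambda$. The only organizational difference is that you work directly with the explicit local frame $\{\widehat{e_I t^{\ell(I)}}\}$, whereas the paper first reduces to an arbitrary $\sigma=\sum_p\sigma_p t^{-p}\in S(\mathbb{T}M)$ and invokes the scaling filtration to see that $\sigma_p(m,m)\in\Cliff_{-p}(T_mM)$ has vanishing supertrace when $p>-n$; your frame computation is the special case of this, and the identification of the value at $\lambda=0$ is the same in both arguments.
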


\begin{remark}
The ordinary tangent groupoid algebra carries a family of traces, param\-etrized by $\lambda \ne 0$, that are obtained by composing the homomorphisms \eqref{eq-eval-t-ne-0} with the usual operator trace on smoothing operators:
\[
C_c^\infty (\mathbb{T}M) \stackrel{\varepsilon_\lambda} \longrightarrow \mathfrak{K}^\infty (L^2 (M)) \stackrel{\Tr} \longrightarrow \C.
\]
Roughly speaking, local, or algebraic, index theory  is   the study of these traces as  $\lambda \to 0$.  The traces   do not converge as $\lambda\to0$, and  instead more  elaborate strategies must be developed, for instance   replacing the traces with equivalent cyclic cocycles. See  for example  \cite{NestTsygan95} or \cite{Perrot13} for two perspectives on this.  It is a remarkable fact, discovered of course by Getzler, that in the supersymmetric context the traces \emph{do} converge.
\end{remark}

\begin{proof}[Proof of Theorem~\ref{thm-smoothly-varying-traces}]
The supertrace on $\mathcal{K}^\infty (L^2 (M,S))$ can be written 
\[
\Str(k) = \int _M  \str  \bigl (k(m,m)\bigr )\, d \mu (m),
\]
where  $\str$ is the pointwise supertrace on 
$\End(S_m) $.
So according to the definitions, if $\tau \in C_c^\infty (\mathbb{T}M,\mathbb{S})$ and $\lambda \ne 0$, then 
\[
 \Str_\lambda (\tau )   = \lambda ^{-n} \int _M \str \bigl (\tau(m,m,\lambda)\bigr )\, d \mu (m) .
 \]
We shall show that for any smooth section $\tau$  the map
\[
(m,\lambda )\longmapsto  \lambda ^{-n}\str \bigl (\tau(m,m,\lambda)\bigr )
\]
extends to a smooth function on $M{\times}\R$, and then calculate   the value of the extension at $0\in \R$  to be  
\begin{equation}
\label{eq-value-at-0}
(m,0 )\longmapsto   \str (\tau(0_m,0))
,
\end{equation}
where $0_m\in T_mM $ is the zero tangent vector  and the supertrace is the coefficient of $e_1\wedge \cdots \wedge e_n \in \wedge^* T_mM$, with $e_1,\dots, e_n$ as above. This will suffice.

Any smooth section of $\mathbb{S}$ over $\mathbb{T}M$ is locally a finite sum of products 
$f \cdot \widehat \sigma$,   where $f$ is a smooth function on $\mathbb{T}M$ and $\sigma\in S(\mathbb{T}M)$; see Definition~\ref{def-sheaf-of-sections}.  Since 
\[
 \lambda ^{-n}\str \bigl ((f\cdot \widehat \sigma)(m,m,\lambda)  \bigr )
 =
 f(m,m,\lambda)\cdot \lambda^{-n}\str \bigl (\widehat \sigma(m,m,\lambda)  \bigr )
 \]
 it suffices to show that $\lambda^{-n}\str   (\widehat \sigma(m,m,\lambda)    )$ extends to a smooth function on $M{\times}\R$, and calculate that the value of the extension  at $\lambda = 0$ agrees with \eqref{eq-value-at-0}.
 
 If $\sigma = \sum \sigma_p t^{-p}$, then 
 \[
 \lambda^{-n}\str \bigl (\widehat \sigma(m,m,\lambda)  \bigr )
 =
 \sum \lambda^{-p-n}\sigma_p(m,m)
 \]
 Now   if $p> -n$, then the restriction of $\sigma_p$ to the diagonal point $(m,m)$  lies in 
 \[
 c\bigl ( \Cliff_{n-1}(T_mM)\bigr )\subseteq \End(S_m),
 \]
  and hence by \eqref{eq-str-fmla3} it has supertrace   zero.  So after writing $q = -p$ we find that   
 \[
 \lambda^{-n}\str \bigl (\widehat \sigma(m,m,\lambda)  \bigr )
 =
 \sum_{q \ge n}  \lambda^{q-n}\str\bigl ( \widehat \sigma_{-q}(m,m)\bigr ) ,
 \]
 which is clearly a smooth function of $m\in M$ and $\lambda \in \R$.  The value at $\lambda=0$ is $\str  ( \widehat \sigma_{-q}(m,m)  )$, and if we write 
 \[
 \sigma_{-n} = \sum_I h_I e_I
 \]
 as in \eqref{eq-local-frame-expansion}, then from    \eqref{eq-str-fmla2} we find that 
  \[
 \str\bigl ( \widehat \sigma_{-n}(m,m)\bigr ) = h_{I_n}(m,m) 
 \]
where  $I_n = (1,2,\dots, n)$.  This is the coefficient of $e_1\wedge\cdots \wedge e_n$ in the fiber $\wedge^* T_mM$, as required.
\end{proof}

\subsection{Final Comments on Index Theory}

In this concluding subsection we shall comment on the roles that the tangent groupoid and rescaling play in index theory, and suggest future developments,   which we aim to pursue elsewhere. 

Let us return to Theorem~\ref{thm-families-of-ops}.  We noted there that the family of operators $\{ D_{(m,\lambda)}\}$ on the source fibers of the tangent groupoid that is associated to a single linear partial differential operator on $M$ is \emph{equivariant} for the (right)  action of the groupoid $\mathbb{T}M$ on itself.  This has the following consequence:

\begin{lemma}
\label{lem-module-map}
The family of operators $\{ D_{(m,\lambda)}\}$ on the source fibers of $\mathbb{T}M$ acts on the function space $C_c^\infty (\mathbb{T}M)$ as a right $C^\infty_c(\mathbb{T}M)$-module endomorphism. 
\end{lemma}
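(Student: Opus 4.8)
The plan is to derive the module-endomorphism property directly from the equivariance of the family $\{D_{(m,\lambda)}\}$ asserted in Theorem~\ref{thm-families-of-ops}. First I would check that $f\mapsto Df$ really is a map $C_c^\infty(\mathbb{T}M)\to C_c^\infty(\mathbb{T}M)$: smoothness of $Df$ is exactly Theorem~\ref{thm-families-of-ops}, and compact support is automatic because a differential operator is local, so $\operatorname{supp}(Df)$ is a closed subset of the compact set $\operatorname{supp}(f)$. It then remains to prove $D(f_1\star f_2)=(Df_1)\star f_2$ for all $f_1,f_2\in C_c^\infty(\mathbb{T}M)$.

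The key preliminary step is to rewrite the convolution in a form in which the variable that $D$ differentiates runs over a \emph{fixed} manifold. Denote by $\mu_m$ the pushforward of the Haar measure $\mu^m$ under the inversion diffeomorphism $\mathbb{T}M^m\to\mathbb{T}M_m$; for the tangent groupoid this is again $|\lambda|^{-n}\mu$ on the source fiber $\{m\}{\times}M$ and the translation-invariant measure on $T_mM$, cf.\ \eqref{eq-left-Haar}. Substituting $\delta=\gamma^{-1}\circ\eta$ in the defining integral and invoking the left-invariance axiom of the Haar system (with $\gamma_1=\eta$) yields
\[
(f_1\star f_2)(\eta)=\int_{\mathbb{T}M_{s(\eta)}} f_1(\eta\circ\delta^{-1})\,f_2(\delta)\,d\mu_{s(\eta)}(\delta).
\]
Thus, for fixed $(m,\lambda)$, the restriction of $f_1\star f_2$ to the source fiber $\mathbb{T}M_{(m,\lambda)}$ is an integral over the \emph{fixed} manifold $\mathbb{T}M_{(m,\lambda)}$ of the right translates $\eta\mapsto f_1(\eta\circ\delta^{-1})$ of $f_1$, weighted by $f_2(\delta)$.

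Applying $D_{(m,\lambda)}$ and differentiating under the integral sign — legitimate since the domain no longer depends on $\eta$, the integrand is jointly smooth, and $f_2$ is compactly supported — reduces the identity to the statement that $D_{(m,\lambda)}\circ R_{\delta^{-1}}^{*}=R_{\delta^{-1}}^{*}\circ D_{t(\delta)}$, where $R_{\delta^{-1}}\colon \mathbb{T}M_{(m,\lambda)}\to\mathbb{T}M_{t(\delta)}$, $\eta\mapsto\eta\circ\delta^{-1}$, is right translation. But this is precisely the equivariance of $\{D_{(m,\lambda)}\}$ noted after Theorem~\ref{thm-families-of-ops}: on the subgroupoids with $\lambda\ne0$ the map $R_{\delta^{-1}}$ is, under the identification $\mathbb{T}M_{(m,\lambda)}\cong M$, the identity, and $D_{(m,\lambda)}=\lambda^q D$ is the same operator on every such fiber; at $\lambda=0$ the map $R_{\delta^{-1}}$ is a translation of $T_mM$ and the model operator $D_m$ is translation-invariant. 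Feeding equivariance back in turns $\eta\mapsto D_{(m,\lambda)}\bigl[\,f_1(\,\cdot\,\circ\delta^{-1})\,\bigr](\eta)$ into $\eta\mapsto (Df_1)(\eta\circ\delta^{-1})$, and integrating against $f_2$ produces $((Df_1)\star f_2)(\eta)$, as required.

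The one step requiring care is the passage to the fixed-domain convolution formula together with the bookkeeping of source and target fibers in the equivariance statement; once these are in place, the differentiation under the integral and the concluding substitution are routine. In writing this out I would first record the left-invariance computation behind the displayed formula, and then present the short chain $D(f_1\star f_2)=(Df_1)\star f_2$ obtained by differentiating under the integral and applying equivariance.
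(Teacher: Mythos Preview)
Your argument is correct and is exactly the fleshing-out the paper has in mind: the paper states the lemma as an immediate consequence of the equivariance of $\{D_{(m,\lambda)}\}$ noted just before it and gives no further proof, so your change of variables to a fixed-domain convolution, differentiation under the integral, and invocation of equivariance (identity on pair-groupoid fibers, translation-invariance of $D_m$ on tangent fibers) is precisely the standard verification being left to the reader.
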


If  $D$ is in addition elliptic, then we can say   more.  To make the cleanest statement it is convenient to introduce the quotient algebra 
$C_c^\infty (\mathbb{T}M)_{[0,1]}$ by the ideal of all smooth, compactly supported functions on $\mathbb{T}M$ that vanish   for all $\lambda \in [0,1]$.  Of course  the family $\{D_{(m,\lambda)}\}$ acts on this algebra by right module endomorphisms, too. Ellipticity implies that this action is almost invertible: 

\begin{theorem}
\label{thm-parametrix}
If $M$ is closed, and if $D$ is elliptic, then the associated right-module endomorphism of   $C_c^\infty (\mathbb{T}M)_{[0,1]}$ is invertible modulo left multiplications by elements of $C_c^\infty (\mathbb{T}M)_{[0,1]}$.
\end{theorem}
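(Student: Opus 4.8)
The plan is to construct a parametrix for the family $\{D_{(m,\lambda)}\}$ on the source fibers of $\mathbb{T}M$ over $\lambda\in[0,1]$, to check that it assembles into a right $C^\infty_c(\mathbb{T}M)_{[0,1]}$-module endomorphism $\mathbf{Q}$, and to observe that the composites of $\mathbf{Q}$ with the endomorphism $T_D$ of Lemma~\ref{lem-module-map} differ from the identity by fiberwise smoothing families that depend smoothly on $\lambda$ — such families being exactly left multiplications by elements of $C^\infty_c(\mathbb{T}M)_{[0,1]}$. First I would collect the ingredients. Since $M$ is closed and $D$ is elliptic of order $q$, classical elliptic theory supplies a pseudodifferential parametrix $Q$ of order $-q$ on $M$ with $QD=I-S$ and $DQ=I-S'$, where $S,S'$ are smoothing, i.e. given by smooth kernels on $M\times M$, which on a closed manifold are automatically compactly supported. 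At the other end of the interval the relevant operator on the source fiber $\mathbb{T}M_{(m,0)}\cong T_mM$ is the constant-coefficient model operator $D_m$ of Theorem~\ref{thm-families-of-ops}, whose principal symbol is invertible off the zero section precisely because $D$ is elliptic; hence $D_m$ has a translation-invariant parametrix $Q_m$ with symbol $\chi(\xi)\,\sigma(D_m)(\xi)^{-1}$ for a smooth cutoff $\chi$ vanishing near the origin and equal to $1$ at infinity, so that $Q_mD_m=I-S_m$ and $D_mQ_m=I-S_m'$ with $S_m,S_m'$ convolution operators whose kernels are conormal at the origin and Schwartz at infinity.

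The heart of the argument, and the step I expect to be the main obstacle, is to fit $\{\lambda^{-q}Q:\lambda\neq 0\}$ and $\{Q_m:m\in M\}$ together into a single operator $\mathbf{Q}$ on the source fibers of $\mathbb{T}M$ over $\lambda\in[0,1]$, acting on $C^\infty_c(\mathbb{T}M)_{[0,1]}$ by left convolution. Exactly as a differential operator was rescaled in the proof of Theorem~\ref{thm-families-of-ops}, one writes the Schwartz kernel of $Q$ in coordinates adapted to the diagonal embedding $M\hookrightarrow M\times M$, pulls it back to the deformation space and rescales: the conormal part of the kernel near the diagonal then extends smoothly across $\lambda=0$, modulo a fiberwise smoothing term, to the kernel of the model parametrix $Q_m$ on $T_mM$, while the part of the kernel of $Q$ supported off the diagonal is smooth and vanishes in the $\lambda=0$ limit. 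This is precisely the assertion that $Q$ defines a pseudodifferential operator on the tangent groupoid, and since the present paper develops only a calculus of \emph{differential} operators on $\mathbb{T}M$, one must either invoke a tangent-groupoid pseudodifferential calculus (Monthubert--Pierrot, Nistor--Weinstein--Xu, Debord--Skandalis, van Erp--Yuncken) or verify the needed smoothness of the rescaled kernel directly from the Rees construction of Section~\ref{sec-tangent-groupoid}. Because $Q$ and each $Q_m$ are equivariant for the respective right actions — the pair groupoid acts by the identity on its source fibers, and $Q_m$ is translation-invariant — the operator $\mathbf{Q}$ is equivariant, hence acts by left convolution and commutes with right multiplication; and as it acts one source fiber at a time it preserves the ideal defining the quotient, so descends to a right-module endomorphism of $C^\infty_c(\mathbb{T}M)_{[0,1]}$.

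Finally, by construction $\mathbf{Q}\circ T_D-I$ and $T_D\circ\mathbf{Q}-I$ are the families $\{S,S_m\}$ and $\{S',S_m'\}$ on the source fibers, which are fiberwise smoothing, equivariant, and smooth in $\lambda\in[0,1]$, but not quite compactly supported on the $\lambda=0$ fiber. To repair this I would replace $\mathbf{Q}$ by $\mathbf{Q}\psi$ for a fiber-variable cutoff $\psi$ equal to $1$ near the diagonal and compactly supported: multiplying the kernel of $Q_m$ by such a $\psi$ alters $Q_mD_m$ only by a compactly supported smooth kernel, since $\psi\equiv 1$ wherever the conormal singularity lives and the derivatives of $\psi$ are compactly supported away from it, and similarly away from $\lambda=0$; so the truncated parametrix inverts $T_D$, on both sides, up to a compactly supported, fiberwise smoothing, $\lambda$-smooth and equivariant family. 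By the Haar system \eqref{eq-left-Haar} such a family is a smooth, compactly supported function on $\mathbb{T}M$, and left convolution by it is left multiplication in $C^\infty_c(\mathbb{T}M)_{[0,1]}$. This gives invertibility of $T_D$ modulo left multiplications; the support bookkeeping is routine, and the one genuinely substantive point is the smoothness across $\lambda=0$ of the rescaled parametrix kernel.
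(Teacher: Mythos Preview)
The paper does not actually prove this theorem: after stating it and unpacking what it means, the authors write only that ``the theorem may be proved using pseudodifferential operator theory (and see \cite{VanErpYuncken15} for an account of the theory of pseudodifferential operators that is particularly well suited to the present context).'' Your proposal is a reasonable expansion of exactly this hint: you correctly identify that the substantive step is to show the rescaled parametrix kernel extends smoothly across $\lambda=0$, that this is precisely what a tangent-groupoid pseudodifferential calculus supplies, and you even cite van Erp--Yuncken among the relevant references. So your approach matches the paper's, with the paper simply deferring the details to the literature.
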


To be explicit, the theorem asserts that  there are right module maps 
\[
\mathbb{D}\colon C_c^\infty (\mathbb{T}M)_{[0,1]} \longrightarrow C_c^\infty (\mathbb{T}M)_{[0,1]}
\quad \text{and} \quad 
\mathbb{Q}\colon C_c^\infty (\mathbb{T}M)_{[0,1]} \longrightarrow C_c^\infty (\mathbb{T}M)_{[0,1]} ,
\]
the first associated to $\{ D_{(m,\lambda)}\}$, for which the operators 
\[
\mathbb{I} -\mathbb{D}\mathbb{Q} \colon C_c^\infty (\mathbb{T}M)_{[0,1]} \longrightarrow C_c^\infty (\mathbb{T}M)_{[0,1]}
\quad \text{and} \quad 
\mathbb{I} -\mathbb{Q}\mathbb{D} \colon C_c^\infty (\mathbb{T}M)_{[0,1]} \longrightarrow C_c^\infty (\mathbb{T}M)_{[0,1]}
\]
are  left multiplications by elements of $C_c^\infty (\mathbb{T}M)_{[0,1]}$.  
The theorem may be proved using pseudodifferential operator theory  (and see \cite{VanErpYuncken15} for an account of the theory of pseudodifferential operators that is particularly well suited to the present context).  

The theorem implies that $\mathbb{D}$    defines a class in  $K_0(C_c^\infty (\mathbb{T}M)_{[0,1]} )$;  see for example \cite[Sec.\ 2]{Milnor71}.  This is  an essential step in Connes' approach to index theory via $K$-theory and the tangent groupoid. 

\begin{remarks} 
Actually when considering $K$-theory it is preferable to pass to a Fr\'echet algebra completion of $C_c^\infty (\mathbb{T}M)_{[0,1]} $, as in  $\cite{CarrilloRouse08} $, or, even better, the $C^*$-algebra completion considered by Connes in  \cite[Sec.\ II.5]{Connes94}.  In addition,    in order to get a sufficiently rich class of examples, one should introduce operators acting on sections of bundles, and use the associated modified convolution algebras, as in Example~\ref{ex-standard-structure} and Lemma~\ref{lem-muliplicative-convolution} above.
\end{remarks}

It is an interesting  challenge to fit  the rescaled bundle and the algebra $C_c^\infty (\mathbb{T}M,\mathbb{S})$ into this type of  $K$-theory picture. The main issue is that the Dirac operator $\slashed{D}$ gives rise to a family of operators for which the analogue of  Lemma~\ref{lem-module-map} holds, but \emph{not} the analogue of Theorem~\ref{thm-parametrix}, the latter because the model operators $\slashed{D}_{(m,0)}$ are not elliptic, as they are in the standard case (as is well known they are in fact the de Rham differentials on the tangent fibers).    Perhaps Kasparov's Dirac operator $d_M$ from \cite[Def.\ 4.2]{Kasparov88} has a role to play here. 

There are other interesting  challenges, too.  For instance although the convolution algebra  $C_c^\infty (\mathbb{T}M,\mathbb{S})$ admits natural  Fr\'echet and Banach algebra completions \cite{ZelinYi19}, there is no $C^*$-algebra completion. 
  
 Getzler took a different approach that focussed not on  $\slashed{D}$ but on the Laplace-type operator $\Delta = \slashed {D}{}^2$, for which the model operators $\Delta_{(m,0)}$ are variants of the quantum harmonic oscillator (and are elliptic).  Supersymmetry relates the supertraces considered in Subsection~\ref{subsec-supertraces} to the index of the Dirac operator:
    
\begin{lemma}
\label{lem-supersymmetry}
The supertrace $\Str  \bigl (\exp (-\lambda^2 \Delta)\bigr )$ is the index of the Dirac operator $\slashed{D}$, and is in particular a constant, integer-valued  function of $\lambda\ne 0$.
\end{lemma}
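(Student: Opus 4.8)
The plan is to deduce the statement from the classical McKean--Singer argument. First I would invoke the fact that, since $M$ is closed, $\Delta = \slashed{D}{}^2$ is a non-negative, formally self-adjoint elliptic operator, hence has discrete spectrum $0\le \mu_0\le \mu_1\le\cdots\to\infty$ with finite-dimensional eigenspaces $E_\mu\subseteq C^\infty(M,S)$ spanned by smooth sections, and that for every $\lambda\ne 0$ the heat operator $\exp(-\lambda^2\Delta)$ is a smoothing operator --- in particular an element of $\mathfrak{K}^\infty(L^2(M,S))$, so that $\Str\bigl(\exp(-\lambda^2\Delta)\bigr)$ is well defined. Each $E_\mu$ is preserved by $\slashed{D}$ (which commutes with $\Delta$) and by the grading operator $c(s)$, and the supertrace of a smoothing operator may be evaluated by summing over an orthonormal eigenbasis, so I would record the identity
\[
\Str\bigl(\exp(-\lambda^2\Delta)\bigr) = \sum_{\mu} e^{-\lambda^2\mu}\bigl(\dim E_\mu^+ - \dim E_\mu^-\bigr),
\]
where $E_\mu^\pm = E_\mu\cap C^\infty(M,S^\pm)$.

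Next I would exploit that $\slashed{D}$ is odd with respect to the grading and satisfies $\slashed{D}{}^2 = \Delta$. For $\mu>0$ the operator $\mu^{-1/2}\slashed{D}$ restricts to mutually inverse isomorphisms $E_\mu^+\to E_\mu^-$ and $E_\mu^-\to E_\mu^+$, so $\dim E_\mu^+ = \dim E_\mu^-$ and every term with $\mu>0$ in the sum above drops out. Only $\mu=0$ contributes, and there $\ker\Delta = \ker\slashed{D}$ because $\langle \Delta u,u\rangle = \|\slashed{D}u\|^2$. Hence
\[
\Str\bigl(\exp(-\lambda^2\Delta)\bigr) = \dim\ker\slashed{D}^+ - \dim\ker\slashed{D}^- = \operatorname{index}(\slashed{D}),
\]
which is manifestly independent of $\lambda\ne 0$ and integer-valued, as claimed.

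The only non-formal ingredient --- and hence the main obstacle --- is the analytic input just quoted: that on the closed manifold $M$ the operator $\Delta$ admits the stated spectral decomposition, that $\exp(-\lambda^2\Delta)$ is genuinely a smoothing operator belonging to $\mathfrak{K}^\infty(L^2(M,S))$ so that the supertrace in the statement makes sense, and that the kernel of $\exp(-\lambda^2\Delta)$ has an eigenfunction expansion converging well enough to legitimize the term-by-term supertrace computation. All of this is standard elliptic and heat-kernel theory, which I would simply cite; the remainder is the short algebraic manipulation with the odd operator $\slashed{D}$ sketched above.
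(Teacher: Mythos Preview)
Your argument is the standard McKean--Singer proof and is correct. The paper itself does not supply a proof of this lemma: it simply records the statement as a well-known fact and refers the reader to \cite{Getzler83,BerlineGetzlerVergne92,Roe98}, so there is nothing to compare beyond noting that your sketch is exactly the classical argument found in those references.
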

 
 As Getzler pointed out, the smoothness of the family of supertraces $\Str_\lambda$ from Subsection~\ref{subsec-supertraces}  now allows one to compute the index from the value at $\lambda {=}0$, which involves only the operators $\Delta_{(m,0)}$, which depend only on the Riemannian curvature of $M$. See \cite{Getzler83,BerlineGetzlerVergne92,Roe98}.  It will be interesting to explore this more thoroughly from the point of view of the cyclic cohomology of the algebra $C_c^\infty (\mathbb{T}M,\mathbb{S})$, and also discover what lessons can be   learned in $K$-theory and $K$-homology about the use of  $\slashed{D}{}^2$ rather than $\slashed{D}$ here.

\section{Appendix. Taylor Expansions}
\label{sec-taylor}

The purpose of this appendix is to prove Proposition~\ref{prop-extension-from-diagonal}.  We shall use the exponential map 
\[
TM \ni X_m \longmapsto (\exp_m (X_m), m) \in  M{\times}M,
\]
which is a diffeomorphism from a neighborhood of the zero section in the tangent bundle onto a neighborhood of the diagonal in $M{\times}M$, and the associated Euler vector field $E$, defined on a neighborhood of the diagonal in  $M{\times}M$, by
\begin{equation*}
E_{(\exp(X_m), m)} = \frac{d}{ds}\Big \vert _{s = 1} (\exp_m (sX_m),m) .
\end{equation*}
The Euler vector field is tangent to each source fiber $M{\times}\{m\}$ of the pair group\-oid, and if $(x_1,\dots, x_n)$ are geodesic local coordinates on $M$ that are centered at $m$, then 
\begin{equation*}
E = \sum _{i=1}^n x_i \partial_i 
\end{equation*}
on $M{\times} \{ m\}$.  We shall also use the concept of Taylor series that is explained in the following two definitions.

 \begin{definition}
 We shall say that a smooth section $\sigma$ of $S{\boxtimes}S^*$ is \emph{synchronous near $m\in M$}, if $\nabla_E\sigma =0$ in a nieghborhood of  $(m,m)\in M{\times}M$.
\end{definition}

By parallel translation, every smooth section of $S\boxtimes S^*$   on the diagonal extends to a smooth section that is synchronous near the diagonal.

\begin{definition}
Let $m\in M$ and let  $(x_1,\dots, x_n)$ be smooth functions defined in  a neighborhood of $(m,m)\in M{\times}M$  that restrict to geodesic local coordinates at  $(m',m')$   on each $M{\times}\{m'\}$.
Let $\sigma$ be a  smooth section  $S{\boxtimes}S^*$.   A 
\emph{Taylor expansion}  of the section $\sigma $ at $m\in M$ is a formal series 
\begin{equation}
\label{eq-taylor-expansion}
 \sum _{\alpha \ge 0}   x^\alpha  \,\sigma_\alpha   ,
\end{equation}
where 
\begin{enumerate}[\rm (i)]

\item the sum is over multi-indices $\alpha=(\alpha_1,\dots, \alpha_n)$, with each $\alpha_k$ a nonnegative integer, and $x^\alpha= x_1^{\alpha_1}\cdots x_n^{\alpha_n}$;

\item each $\sigma_\alpha$ is   a smooth section of $S{\boxtimes}S^*$ that is synchronous near  $m$
(note that since it is synchronous near $m$, $\sigma_\alpha$   is determined by its values along the diagonal near $m$)
\item the series is asymptotic to $\sigma$ near the diagonal and near $(m,m)\in M{\times}M$  in the sense that for every $N\in \mathbb{N}$ the difference 
\[
\sigma - \sum _{|\alpha|<  N}   x^\alpha \,\sigma_\alpha
\]
vanishes to order $N$ on the diagonal near $(m,m)$ (here $|\alpha| = \alpha_1+ \cdots + \alpha_n$).
\end{enumerate}
\end{definition}

Every smooth section has a unique Taylor expansion.  Proposition~\ref{prop-extension-from-diagonal} is a consequence of the following result:

\begin{proposition} 
\label{prop-scaling-order-and-taylor}
Let $\sigma$ be a  smooth section of the   $S{\boxtimes}S^*$, and let $m\in M$. If 
\[
\sigma \sim  \sum _{\alpha \ge 0}   x^\alpha  \,\sigma_\alpha   
\]
is the Taylor series of $\sigma$ near $m$, then
\begin{equation}
\label{eq-scaling-vs-taylor-order}
\operatorname{Scaling-order} ( \sigma ) 
\ge \min _\alpha  \bigl \{ \, |\alpha| - \CliffordOrder(\sigma_\alpha)\, \bigr \} 
\end{equation}
near $m$. In particular, if $\sigma$ is synchronous near $m$, then
\[
\operatorname{Scaling-order} ( \sigma ) 
\ge -\CliffordOrder(\sigma) .
\]
near $m$.
  \end{proposition}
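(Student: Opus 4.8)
The plan is to reduce the full inequality to the ``synchronous'' special case stated at the end of the proposition, and then to prove that case by computing in a carefully chosen frame near the diagonal. For the reduction, fix a differential operator $D$ of Getzler order $q$. Any operator of Getzler order $q$ has ordinary differential order at most $q$, since the Clifford multiplications and function factors contribute no derivatives; hence $D$ applied to the Taylor remainder $\sigma-\sum_{|\alpha|\le q}x^\alpha\sigma_\alpha$, which vanishes to order $q{+}1$ on the diagonal near $m$, produces a section vanishing on the diagonal near $m$, and so of Clifford order $-\infty$ there. Since the Clifford order of a sum is at most the maximum of the Clifford orders of its summands, it is enough to bound $\CliffordOrder\bigl(D(x^\alpha\sigma_\alpha)\bigr)$ for each $|\alpha|\le q$; and by Lemma~\ref{lem-S-module-over-A} multiplying by $x^\alpha$ raises scaling order by $|\alpha|$. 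Thus the whole statement follows once we know that a synchronous section $\tau$ satisfies $\operatorname{Scaling-order}(\tau)\ge -\CliffordOrder(\tau)$ near $m$ (the case $\tau\equiv 0$ near $m$ being trivial), which is exactly the ``in particular'' clause.

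To prove this, I would work with the fiberwise geodesic coordinates $x_1,\dots,x_n$ near the diagonal introduced above and, on each source fiber $M\times\{m'\}$, the oriented orthonormal frame of $TM$ obtained by parallel transport along the geodesics emanating from $m'$, together with the adapted radially parallel spinor frame of $S$; these depend smoothly on $m'$, hence give a smooth local frame of $S\boxtimes S^*$ near $(m,m)$. In this frame the connection form $\omega$ of the spinor connection on the first factor satisfies $\iota_E\omega=0$, so --- differentiating this identity once --- $\omega$ \emph{vanishes on the diagonal} near $m$, and $\omega$ is \emph{$\clifford_2(TM)$-valued}, because the frame is adapted to the spin structure. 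Each Clifford monomial $e_I$ has constant matrix entries in this frame, hence is synchronous, and $c(X)e_I=\sum_k X^k(\pm e_{J(k)})$ with smooth coefficients $X^k$ and $\ell(J(k))\le\ell(I)+1$. Finally a synchronous $\tau$ has constant entries here (because $\nabla_E=\partial_E$ in this frame), so if $\CliffordOrder(\tau)\le d$ near $m$ then $\tau=\sum_{\ell(I)\le d}h_I\,e_I$ near $(m,m)$ with the $h_I$ smooth.

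With this in hand one argues by induction on $q$ that $\CliffordOrder(D\tau)\le q+d$ near $m$ whenever $D$ has Getzler order $q$ and $\tau$ is synchronous with $\CliffordOrder(\tau)\le d$. Writing $D$ locally as a sum of products of a function and at most $q$ operators of the form $\nabla_X$ or $c(X)$, and using $[\nabla_X,c(Y)]=c(\nabla^{\mathrm{LC}}_XY)$ to move all Clifford multiplications to the right, each such product applied to $\tau$ becomes $\nabla_{Z_1}\cdots\nabla_{Z_b}\bigl(c(W_1)\cdots c(W_a)\tau\bigr)$ with $a+b\le q$, plus an operator of Getzler order $\le q-1$ applied to $\tau$, which the inductive hypothesis controls. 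By the previous paragraph $c(W_1)\cdots c(W_a)\tau=\sum_J\widetilde h_J\,e_J$ with $\ell(J)\le d+a$ and $\widetilde h_J$ smooth, and the Leibniz rule expresses $\nabla_{Z_1}\cdots\nabla_{Z_b}$ of this as a sum of pieces of the form (smooth function)$\,\cdot\,\nabla_{Z_{i_1}}\cdots\nabla_{Z_{i_r}}e_J$ with $0\le r\le b$. The key estimate is that
\[
\nabla_{Z_{i_1}}\cdots\nabla_{Z_{i_r}}e_J\big|_{\text{diagonal}}\ \in\ \clifford_{\ell(J)+r}(TM)\quad\text{near }m .
\]
Indeed, in the frame $\nabla_Z=\partial_Z+\omega(Z)$ and $\partial_Z e_J=0$, so $\nabla_{Z_{i_1}}\cdots\nabla_{Z_{i_r}}e_J=\Xi\cdot e_J$ where $\Xi$ is a sum of terms, each a product of $s$ factors $\omega(\cdot)$ ($1\le s\le r$) carrying $r-s$ additional derivatives; on the diagonal such a term vanishes unless each of its $s$ factors has been differentiated, which forces $r-s\ge s$, and then, since each $\omega$-factor is $\clifford_2$-valued, the term lies in $\clifford_{2s}\subseteq\clifford_r$ along the diagonal. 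Multiplying by $e_J$ gives the displayed inclusion, so each Leibniz piece has Clifford order at most $(d+a)+r\le d+q$ along the diagonal near $m$, function prefactors being harmless for Clifford order; this closes the induction.

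The main obstacle is the displayed estimate on iterated covariant derivatives of the monomials $e_J$. Its proof uses \emph{both} that $\omega$ vanishes on the diagonal \emph{and} that $\omega$ is $\clifford_2$-valued, and these features conspire so that the Clifford order grows by no more than the differential order --- precisely the numerology built into the definition of scaling order; a naive bound that ignored the radial-gauge vanishing would allow growth by $2r$, which is too weak. Everything else is bookkeeping: organising the Leibniz expansion, pushing the Clifford multiplications through the covariant derivatives by the commutator identity, verifying that the chosen frame simultaneously places $\omega$ in radial gauge and keeps the $e_I$ constant, and carrying along the ``near $m$'' qualifiers and the smooth dependence on the base point $m'$ throughout.
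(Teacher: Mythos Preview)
Your proof is correct and takes a genuinely different route from the paper's. Both arguments hinge on the same two facts about the spin connection in radial gauge---that the connection one-form vanishes along the diagonal and is $\clifford_2$-valued---but they exploit them differently. The paper introduces the auxiliary ``Taylor order'' (the right-hand side of \eqref{eq-scaling-vs-taylor-order}) and proves that each Getzler generator $c(X)$ or $\nabla_X$ lowers it by at most one; the nontrivial case $\nabla_X$ is handled by Lemma~\ref{lem-taylor}, which uses the Euler-field identity $\nabla_E\nabla_X\sigma+\nabla_X\sigma=K(E,X)\sigma$ to compute the full Taylor expansion of $\nabla_X\sigma$ for synchronous $\sigma$ and show that its coefficients are of the form $c(q(\omega_\alpha))\sigma$ with $\omega_\alpha\in\wedge^2TM$ and $|\alpha|\ge 1$. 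You instead reduce directly to the synchronous case by truncating the Taylor series at the Getzler order of the test operator $D$, and then prove the synchronous estimate by a frame-based combinatorial bound: expanding $\nabla_{Z_1}\cdots\nabla_{Z_r}e_J$ in the radially parallel frame and counting how many $\omega$-factors can survive on the diagonal. What the paper's approach buys is a precise formula linking the Taylor coefficients of $\nabla_X\sigma$ to the Riemann curvature, which is informative even though it is not strictly needed here; what your approach buys is that it avoids the Euler-field ODE entirely and replaces it with an elementary pigeonhole count ($r-s\ge s$ forces $2s\le r$), at the cost of some bookkeeping with the commutator $[\nabla_X,c(Y)]=c(\nabla^{\mathrm{LC}}_XY)$ and the Leibniz expansion. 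One small point: your phrase ``differentiating this identity once'' for $\omega\vert_{\text{diag}}=0$ is terse but correct---differentiating $\omega(E)=0$ in any direction $X$ at the origin gives $\omega(X)=0$ there, since $\partial_X E=X$ at the origin.
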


The proposition is proved as follows.  Let us temporarily call the quantity on the right hand side of \eqref{eq-scaling-vs-taylor-order} the \emph{Taylor order} of $\sigma$. Obviously
\[
\operatorname{Taylor-order}(\sigma) \le - \operatorname{Clifford-order}(\sigma)
\]
If we can prove that applying an operator $D$ to $\sigma$ decreases the Taylor order by at most the Getzler order of $D$, then we shall get 
\[
\begin{aligned}
\operatorname{Taylor-order}(\sigma) - \operatorname{Getzler-order}(D) 
	& \le 
\operatorname{Taylor-order}(D\sigma)  \\
& \le - \operatorname{Clifford-order}(D\sigma)
\end{aligned}
\]
and hence 
\[
\operatorname{Clifford-order}(D\sigma) \le \operatorname{Getzler-order}(D) -\operatorname{Taylor-order}(\sigma)
\]
In view of Definition~\ref{def-scaling-order},  the proposition follows immediately from this.  As for the effect on the Talyor order of applying $D$, it is clear that a Clifford multiplication $c(X)$ increases it by at most one; the other case to consider, that of a covariant derivative $\nabla_X$, is handled by the following lemma:

\begin{lemma} 
\label{lem-taylor}
Let $\sigma$ be a  smooth section of the   $S{\boxtimes}S^*$ that is synchronous near $m\in M$, and let $X$ be a vector field on $M$. The  Taylor series at $m$ of the section  $\nabla_X \, \sigma $  has the form 
  \[
\nabla_X \, \sigma  \sim  \sum _{|\alpha|\ge 1}   x^\alpha \, c(q(\omega_\alpha))  \,  \sigma   ,
\]
  where  each   $\omega_\alpha$ is the germ near $m\in M$ of a smooth section of $\wedge ^2 TM$. Here we regard   $c(q(\omega_\alpha))$ as a section of $S{\boxtimes}S^*$ defined on the diagonal near $m$, and extend it to a section over $M{\times}M$  that is synchronous near $m$.
  \end{lemma}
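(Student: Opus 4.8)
The plan is to trivialize $S$ near the diagonal by radial parallel transport along the source fibres, and then simply read off $\nabla_X\sigma$ from the connection $1$-form in that frame. Fix $m'\in M$ near $m$ and work on the source fibre $M\times\{m'\}$ with the geodesic coordinates $(x_1,\dots,x_n)$ centred at $(m',m')$ used in the appendix, so that $E=\sum_i x_i\partial_i$ there. On a neighbourhood of $m$ in $M$ choose an orthonormal frame $\{e_1,\dots,e_n\}$ of $TM$ together with an adapted local frame of $S$, so that $\nabla=d+\tfrac14\sum_{i,j}\omega_{ij}\,c(e_i)c(e_j)$ with $\omega_{ij}$ the Levi--Civita connection $1$-form, and then radially parallel-transport both frames along the geodesics of the source fibre; by the compatibility \eqref{eq-compatible-connection-on-S} the transported frame of $S$ stays adapted to the transported frame of $TM$, and both are synchronous near the diagonal. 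Writing $\vartheta=\tfrac12\sum_{i<j}\omega_{ij}\,e_i\wedge e_j$, a $\wedge^2TM$-valued $1$-form, the formula above reads $\nabla=d+c(q(\vartheta))$ (cf.\ \eqref{eq-K-from-gamma}).

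Now if $\sigma$ is synchronous near $m$, then on $M\times\{m'\}$ it is $\nabla$-parallel along every radial geodesic issuing from $(m',m')$, hence \emph{constant} in the synchronous frame of $S$ near $(m',m')$. Since $X$ is tangent to the source fibres $M\times\{m'\}$, the term $d\sigma(X)$ vanishes in this frame, and therefore
\[
\nabla_X\sigma \;=\; c\bigl(q(\vartheta(X))\bigr)\,\sigma \qquad\text{on } M\times\{m'\}.
\]
It remains only to expand the $\wedge^2TM$-valued function $\vartheta(X)$. The frame $\{e_i\}$ being radially parallel means exactly that $\iota_E\omega_{ij}=0$, i.e.\ $\iota_E\vartheta=0$; in geodesic coordinates this forces $\vartheta$, and hence $\vartheta(X)$, to vanish at the centre $x=0$, so the Taylor expansion of $\vartheta(X)$ at $(m',m')$ begins in order at least one:
\[
\vartheta(X) \;\sim\; \sum_{|\alpha|\ge 1} x^\alpha\,\omega_\alpha ,
\]
where each $\omega_\alpha$ is the germ near $m$ of a smooth section of $\wedge^2TM$ — smooth in $m'$ because the entire construction is. Applying $c\circ q$ and multiplying by $\sigma$ are operations that preserve synchronicity (as $c$ and $q$ are parallel and $\sigma$ is synchronous), so we obtain
\[
\nabla_X\sigma \;\sim\; \sum_{|\alpha|\ge 1} x^\alpha\, c(q(\omega_\alpha))\,\sigma ,
\]
with each coefficient $\sigma_\alpha:=c(q(\omega_\alpha))\sigma$ synchronous near $m$. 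By uniqueness of Taylor expansions this is the Taylor series of $\nabla_X\sigma$, which is precisely the claimed form.

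The substantive content is the reduction in the second paragraph: once $S$ is trivialized synchronously, $\nabla_X$ acting on a synchronous section is plain multiplication by the connection $1$-form, which is manifestly of the form $c(q(\cdot))$ applied to a germ of a section of $\wedge^2TM$ and vanishes along the diagonal. The only points that need a little care are routine: that the synchronous frame, and hence the sections $\omega_\alpha$, can be chosen to depend smoothly on the base point $m'$ (immediate, since they come from a single smooth frame near $m$ by the exponential map and parallel transport), and the two standard facts invoked — the local formula for the spin connection in an orthonormal frame, and the vanishing of the connection form at the centre of geodesic coordinates (the radial gauge). I do not expect a genuine obstacle here.
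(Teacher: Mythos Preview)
Your argument is correct, and it takes a genuinely different route from the paper's. You trivialize $S$ by a radially parallel frame and read off $\nabla_X\sigma = c(q(\vartheta(X)))\sigma$ directly from the spin connection form $\vartheta$ in radial gauge; the vanishing $\vartheta|_{x=0}=0$ (which follows from $\iota_E\vartheta=0$ by differentiating $\sum_k x_k\omega_{ij}^k=0$ at the origin, as you note) then gives the Taylor expansion starting at $|\alpha|\ge 1$. The paper instead argues intrinsically, without ever choosing a frame for $S$: it applies the curvature identity to the pair $(E,X)$ with $X=\partial_i$, obtaining $(\nabla_E+1)\nabla_X\sigma = K(E,X)\sigma = c\bigl(q(\gamma(R(E,X)))\bigr)\sigma$, Taylor-expands both sides using $\nabla_E(x^\alpha\sigma_\alpha)=|\alpha|\,x^\alpha\sigma_\alpha$, and solves $(1+|\alpha|)\sigma_\alpha = c(q(\eta_\alpha))\sigma$ for the coefficients.

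What each buys: your argument is more elementary and in fact yields the \emph{exact} identity $\nabla_X\sigma = c(q(\vartheta(X)))\sigma$ on each source fibre, not merely its Taylor series; it also makes clear that the phenomenon is nothing more than the radial gauge for the spin connection. The paper's argument avoids any explicit frame for $S$ and ties the Taylor coefficients directly to the Riemann curvature $R$ via the formula $K=c\circ q\circ\gamma(R)$, which is the form in which these coefficients are later used. In your version that link is one step removed (through the structure equation relating $\vartheta$ to the curvature $2$-form), though for the lemma as stated this makes no difference.
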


\begin{proof}
(Compare \cite[Prop.\ 12.22]{Roe98}.)
A general vector field $X$ on $M$ can be written as a combination $\sum_i f_i \partial_i$, and by expanding the smooth coefficient functions $f_i$ in Taylor series we see that it suffices to prove the lemma for the coordinate vector fields  $X= \partial_i$.

 According to the definition of curvature,
\begin{equation}
\label{eq-def-of-curvature}
\nabla_E \nabla _X \, \sigma  - \nabla_X \nabla _E \, \sigma  - \nabla _{[E,X]} \, \sigma   = K(E,X) \,  \sigma  .
\end{equation}
Since the section $ \sigma $ is synchronous near $m$,
\begin{equation}
\label{eq-euler-vector-field0}
\nabla _E  \sigma  = 0 
\end{equation}
in a neighborhood of $(m,m)\in M{\times}M$.  Moreover, since $X$ is a coordinate vector field, 
\begin{equation}
\label{eq-euler-vector-field1}
[E,X] = - X .
\end{equation}
Inserting  \eqref{eq-euler-vector-field0} and \eqref{eq-euler-vector-field1}  into   \eqref{eq-def-of-curvature}  we find that 
\begin{equation}
\label{eq-euler-vector-field2}
\nabla_E \nabla_X \, \sigma  + \nabla_X\,  \sigma   = K(E,X)\, \sigma  .
\end{equation}
Now expand $\nabla_X\,  \sigma $ as a Taylor series at $m\in $M,
\begin{equation}
\label{eq-euler-vector-field3}
\nabla _X  \sigma  \sim  \sum_{|\alpha|\ge 1}  x^\alpha \sigma _\alpha
\end{equation}
(there is no order zero term because the section $ \sigma $  is synchronous).
Using  the formula 
\[
\nabla _E \, x^\alpha \,  \sigma _\alpha = |\alpha|\, x^\alpha\, \sigma _\alpha
\]
for the Euler vector field we find  that the Taylor series for $\nabla_E \nabla_X \,  \sigma $ is 
\begin{equation}
\label{eq-euler-vector-field4}
\nabla_E \nabla_X \,  \sigma    \sim    \sum_{\alpha}  |\alpha|\, x^\alpha  \sigma _\alpha.
\end{equation}

Next,  recall that the curvature operator $K(E,X)$ may be written as 
\[
K(E,X) = c\bigl  (\gamma (R(E,X))\bigr ) .
\]
See \eqref{eq-K-from-gamma}.  Write the section $\gamma (R(E,X))$ of $\wedge^2 TM$  as a Taylor series
\begin{equation}
\label{eq-euler-vector-field5}
\gamma (R(E,X)) \sim  \sum_{|\alpha|\ge 1} x^\alpha \eta_\alpha ,
\end{equation}
where each  $\eta_\alpha \in \wedge^2 TM$ is synchronous at $m\in M$ for the Levi-Civita connection (there is no order zero term in this Taylor expansion either, this time because the vector field $E$ vanishes at $m\in  M$).
Inserting \eqref{eq-euler-vector-field3}, \eqref{eq-euler-vector-field4}  and \eqref{eq-euler-vector-field5} into  \eqref{eq-euler-vector-field2}  we obtain an identity of Taylor expansions 
\[
\sum_{|\alpha|\ge 1} (1+ |\alpha|)\, x^\alpha  \sigma _\alpha =   \sum_{|\alpha|\ge 1} x^\alpha c(q(\eta_\alpha))  \sigma  .
\]
The lemma follows from this.
\end{proof}

\bibliography{Refs} 
\bibliographystyle{alpha}

\noindent {\small  Department of Mathematics, Penn State University, University Park, PA 16802.}

\smallskip

\noindent{\small Email: higson@psu.edu and  zuy106@psu.edu.}

\end{document}